\def\implies{\Longrightarrow}
\newcommand{\eqref}[1]{(\ref{#1})}
\newtheorem{theorem}{Theorem}[section]
\newtheorem{claim}[theorem]{Claim}
\newtheorem{prop}{Proposition}[section]
\newtheorem{thmm}[prop]{Theorem}
\newtheorem{lem}[prop]{Lemma}
\newtheorem{cor}[prop]{Corollary}
\newtheorem{clm}[prop]{Claim}
\newcommand{\BN}{{\mathbb{N}}}
\newcommand{\BZ}{{\mathbb{Z}}}
\newcommand{\CA}{{\mathcal{A}}}
\newcommand{\CD}{{\mathcal{D}}}
\newcommand{\CF}{{\mathcal{F}}}
\newcommand{\CG}{{\mathcal{G}}}
\newcommand{\CI}{{\mathcal{I}}}
\newcommand{\CJ}{{\mathcal{J}}}
\newcommand{\CO}{{\mathcal{O}}}
\newcommand{\CP}{{\mathcal{P}}}
\newcommand{\CR}{{\mathcal{R}}}
\newcommand{\CT}{{\mathcal{T}}}
\newcommand{\ind}{{\mathbh{1}}}
\newcommand{\prob}{\mathbf{P}}
\newcommand{\pr}{\mathbb{P}}
\newcommand{\Z}{\mathbb{Z}}
\newcommand{\proj}{\Pi_N}
\newcommand{\om}{{\omega}}
\newcommand{\ep}{{\varepsilon}}
\newcommand{\lam}{{\lambda}}
\begin{document}
\begin{frontmatter}

\title{On the range of a random walk in a torus and random interlacements}
\runtitle{Range of a random walk in a torus}

\begin{aug}
\author[a]{\fnms{Eviatar~B.}~\snm{Procaccia}\corref{}\thanksref{t1}\ead[label=e1]{procaccia@math.ucla.edu}\ead[label=u1,url]{http://www.math.ucla.edu/\textasciitilde procaccia}}
\and
\author[b]{\fnms{Eric}~\snm{Shellef}\thanksref{T2}\ead[label=e3]{shellef@gmail.com}}

\thankstext{t1}{Work on this project was done while the author
was in the Weizmann Institute of Science.}
\thankstext{T2}{Supported by ISF Grant 1300/08
and EU Grant PIRG04-GA-2008-239317.}
\runauthor{E. B Procaccia and E. Shellef}

\affiliation{UCLA and Weizmann Institute of Science,\break
and Weizmann Institute of Science}

\address[a]{Department of Mathematics\\
UCLA\\
520 Portola Plaza\\
Los Angeles, California 90095\\
USA\\
and\\
Faculty of Mathematics \\
\quad and Computer Science\\
Weizmann Institute of Science\\
POB 26\\
Rehovot 76100\\
Israel\\
\printead{e1}\\
\printead{u1}}

\address[b]{Faculty of Mathematics\\
\quad and Computer Science\\
Weizmann Institute of Science\\
POB 26\\
Rehovot 76100\\
Israel\\
\printead{e3}}
\end{aug}

\received{\smonth{6} \syear{2012}}
\revised{\smonth{11} \syear{2013}}

%
\begin{abstract}
Let a simple random walk run inside a torus of dimension three or
higher for a number of steps which is a constant proportion of the
volume. We examine geometric properties of the range, the random subgraph
induced by the set of vertices visited by the walk. Distance and mixing
bounds for the typical range are proven that are a $k$-iterated log
factor from those on the full torus for arbitrary $k$. The proof
uses hierarchical renormalization and techniques that can possibly
be applied to other random processes in the Euclidean lattice. We use
the same technique to bound the heat kernel of a random walk on random
interlacements.
\end{abstract}

%
\begin{keyword}[class=AMS]
\kwd[Primary ]{60K35}
\kwd[; secondary ]{60K37}
\end{keyword}
\begin{keyword}
\kwd{Random walk}
\kwd{random interlacements}
\kwd{mixing}
\end{keyword}
%

\end{frontmatter}

\section{Introduction}\label{sec1}
Consider a discrete torus of side length $N$ in dimension \mbox{$d\ge3$}.
Let a simple random walk run in the torus until it fills a constant
proportion of the torus and examine the \emph{range}, the random subgraph
induced by the set of vertices visited by the walk. How well does
this range capture the geometry of the torus? Viewing the range as
a random perturbation of the torus, we can draw hope that at least
some geometric properties of the torus are retained, by considering
results on a more elementary random perturbation, Bernoulli percolation.

It is now known that various properties of the Euclidean lattice ``survive''
Bernoulli percolation with density $p>p_{c}(\Z^d)$. In \cite
{antal1996chemical},
Antal and Pisztora proved that there is a finite $C(p,d)$ such that
the graph distance between any two vertices in the infinite cluster
is more than $C$ times their $l_{2}$ distance, with probability
exponentially low in this distance. Isoperimetric bounds for the
largest connected cluster in a fixed box of side $n$ were given by
Benjamini and Mossel for $p$ sufficiently close to $1$ in \cite
{benjamini2003mixing},
and by Mathieu and Remy for $p>p_{c}$ in \cite{mathieu2004isoperimetry}.
A consequence is that the mixing time for a random walk on this cluster
has the same order bound, $\theta(n^{2})$, as on the full box. In
\cite{pete2007note}, Pete extends this result to more general graphs.

%
\begin{figure}

\includegraphics{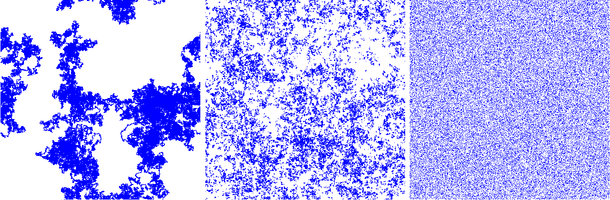}

\caption{From left to right, the range in
2 dimensions, a slice in 3 dimensions and Bernoulli percolation, all
of density 0.3.}\label{figrangevsperc}
\end{figure}

Returning to our process, in Figure~\ref{figrangevsperc} simulation
pictures are shown that give heuristical support to the view that
although the range for $d\ge3$ has long range dependence, it bears
some similarities to i.i.d. site percolation. Indeed, one can see that
the middle picture, a 2d slice of the range of a walk that filled
30\% of a 3d torus, is ``in between,'' dependence-wise, the i.i.d.
picture on the right and the highly dependent picture on the left
where the effect of two-dimensional recurrence is evident. Thus, one
might expect analogous geometric behavior of the range for $d\ge3$
and i.i.d. percolation. This partially turns out to be the case.

In \cite{benjamini2008giant}, the complement of the range, called
the \emph{vacant set}, is investigated by Benjamini and Sznitman.
For positive $u$, it is shown $uN^{d}$ is indeed the proper timescale
to generate percolative behavior of the vacant set. Starting at the
uniform distribution, it is easily shown that for some $c(u,d)>0$,
the probability a given vertex in the torus is visited by the walk
is between $c$ and $1-c$, independently of $N$. A more difficult
result is that for small $u$, the vacant set typically contains a
connected component that is larger than some constant proportion of
the torus. Indeed, simulations support the existence of a phase transition
in $u$ of the vacant set geometry, where below some critical $u_{c}>0$,
a unique giant component appears, and above it all clusters are microscopic.

The range, unlike the vacant set, does not display an obvious phase
transition in $u$. It is connected for all positive $u$, and fills
a $c'(u,d)>0$ proportion of the torus with high probability. Despite
the analogy to percolation being flawed in this respect, the range does
display some
percolative behavior due to the Markov property and uniform transience
of a random walk in $d>2$. Roughly, conditioning on the vertices
by which the walk enters and exits a small box makes the path in between
them independent from the walk outside this box. Using this idea and
facts from percolation theory gathered in Section~\ref
{secSupercritical-percolation-percolates},
we prove the range does capture the distance and isoperimetric bounds
of the torus, though our methods require an iterated logarithmic correction
to the bounds of the full torus. In Section~\ref{secRenormalization},
it is shown that for arbitrarily small $u>0$, the range asymptotically
dominates a recursive structure, defined in Section~\ref
{secNotation-and-Definitions},
which can roughly be described as a finite-level supercritical fractal
percolation. From this structure, we extract distance bounds
(Appendix~\ref{secDistance-bound}) and mixing bounds (Section~\ref
{secMixing-bound})
that are a $\log^{(k)}(N)=\log(\log(\cdots(\log(N)\cdots k\cdots)))$
factor from those on the torus.

Let us expand a bit on the heuristics presented in the previous
paragraph. Since
the holes in the range are larger than those in i.i.d. percolation (see
the last comment in \cite{benjamini2008giant}), one can never hope
to dominate it. Instead, we formulate a notion of density of a box
of side $n$, which essentially means that it is crossed top to bottom
(\emph{traversed}) by the random walk an order of $n^{d-2}$ times.
A union bound then gives that w.h.p. all $\log^{4}N$-sided ``first-level''
boxes in the torus possess this property. Next, given this condition,
for each fixed first-level box, all internal ``second-level''
boxes of side $c\log^{4}(\log N)$ are dense w.h.p., and independently
from other disjoint first-level boxes. The probability for the denseness
of the second-level boxes is not high enough for a union bound on
all of them, however, it is enough such that first-level boxes whose
second-level boxes are all dense dominate $p$-percolation for arbitrarily
high $p<1$. This is the basis of the hierarchical renormalization
used below to prove the same fact for ``$k$-level'' boxes with
arbitrary $k$. A drawback of this method is that the density of boxes
becomes diluted by a constant factor from level to level, preventing
us from continuing this rescaling to reach boxes of a bounded size.
This dilution is the main source of the $\log^{(k)}(N)$ correction.
We believe this correction is an artifact of the method and that the
true bounds should be the same as those on the torus.

A central technical concept introduced in the paper is the recursively
defined $k$-goodness of a box, which is roughly that the $(k-1)$-good
smaller scale boxes inside satisfy some typical supercritical Percolation
properties. The main demand from $0$-good boxes is that the range
is connected in their interior. This provides a useful way to analyze
the range but perhaps a better formulated notion will get sharper
bounds. A second technique worth mentioning is the propagation of
isoperimetric bounds through multiple scales in Lemma~\ref{lemphilowbnd}.
This has been done for one level in \cite{mathieu2004isoperimetry},
but it is not clear how to extend the method there to more than one
level. Last, getting rid of dependence on time in the random walk
when moving to smaller scale boxes is not trivial. To do this, we
prove the domination of the $k$-good recursive structure mentioned
above simultaneously for all $\{ \mathcal{R}_{N}(t)\} _{t\ge
uN^{d}}$,
where $\mathcal{R}_{N}(t)$ is the range of the walk up to time~$t$.
This is facilitated by results on conditioned random walks from
Section~\ref{secDense-Random-Walk}, in particular by Lemma~\ref
{lemallconnected}.
The lemma shows that given any fixed ``boundary-connected-path''
$f(t)$ in a dense box (see definition above Lemma~\ref
{lemPNgePrhoandFt}),
the random walk traversals will merge it w.h.p. into a single connected
component, for all $t\ge0$.

Using the results proved for the random walk on the torus, we prove a
bound on the Heat kernal of random walk on Random Interlacements. In
Appendix \ref{appC}, we write a short introduction on Random Interlacements
where one can find the notation used in Section~\ref{secinter}.

It should be mentioned that while all sections ahead require the terminology
introduced in Section~\ref{secNotation-and-Definitions}, all remaining
sections apart from Section~\ref{secRenormalization} may be read
quite independently from one another. Section~\ref{secRenormalization}
also relies on random walk definitions from Section~\ref
{secDense-Random-Walk}. For reading convenience, one can find an index
of symbols in Appendix~\ref{secinde}.

\section{Result and notation}\label{secNotation-and-Definitions}

\def\Pw{\mathtt{P}}
\def\Dns{\Lambda}
\newcommand{\Bx}[1]{\mathbf{#1}}
\def\Good{\mathcal{G}}
\newcommand{\SubBoxSet}[1]{\mathbb{#1}}

Let $\mathcal{T}(N,d)$\label{pg1} be the discrete $d$-dimensional torus with
side length $N$, for $d\ge3$. Fixing $d$, $\mathcal{T}(V,E)$ is
a graph with
\[
V(N)=\bigl\{ \mathbf{x}\in\mathbb{Z}^{d}\dvtx0\le x_{i}<N,
1\le i\le d\bigr\}
\]
and
\[
E(N)=\bigl\{ \{ \mathbf{x},\mathbf{y}\} \subset V\bigl(\mathcal {T}(N)\bigr)
\dvtx\proj(\mathbf{x}-\mathbf{y})\in\{ \pm\mathbf {e}_{1},\ldots,\pm
\mathbf{e}_{d}\} \bigr\},
\]
where\label{pg2} $\proj\dvtx\mathbb{Z}^{d}\to V(N)$ for $\mathbf{x}\in\mathbb{Z}^{d}$
is ${\proj}(\mathbf{x})=(x_{1}\operatorname{ mod }N,\ldots, x_{d}\operatorname{
mod }N)$
and $\{ \mathbf{e}_{i}\} _{i=1}^{d}$ is the standard basis
of $\mathbb{Z}^{d}$.

Note that if $S(\cdot)$ is a simple random walk (SRW) in $\mathbb{Z}^{d}$,
$S_{N}(\cdot)=\proj\circ S(\cdot)$ is a SRW in $\mathcal{T}(N)$.
Let\label{pg3} $\mathcal{R}(t_{1},t_{2})=\{ S(s)\dvtx t_{1}\le s<t_{2}\} $
and call $\mathcal{R}(t)=\mathcal{R}(0,t)$ the \emph{range} (until
time $t$) of the walk. We consider $\mathcal{R}_{N}(t)$, the random
connected subgraph of $\mathcal{T}$ induced by $\proj\circ\mathcal
{R}(t) $, where we include only edges traversed by the random walk.
Throughout the paper, when no ambiguity is present, we identify a
graph with its vertices.

Let $\mathbf{P}_{\mathbf{x}}[\cdot]$ be the law that makes
$S(\cdot)$ an independent SRW starting at $\mathbf{x}\in\mathbb{Z}^{d}$.
Below are the main three results of the paper.
%
\begin{thmm}\label{thmmdistthm}
Set $u>0$ and for a graph $G$, let $d_{G}(\cdot,\cdot)$ denote
graph distance. Then for any $k$,
\begin{eqnarray*}
&&\lim_{N\to\infty}\mathbf{P}_{\mathbf{0}}\biggl[\max
_{t\ge uN^{d}}\biggl\{ \frac{d_{\mathcal{R}_{N}(t)}(\mathbf{x},\mathbf{y})}{d_{\mathcal
{T}(N)}(\mathbf{x},\mathbf{y})} \dvtx\mathbf{x},\mathbf{y}\in
\mathcal {R}_{N}(t),d_{\mathcal{T}(N)}(\mathbf{x},\mathbf{y})>(\log N
)^{5d}\biggr\} \\
&&\hspace*{262pt}{}>\log^{(k)}N\biggr]\\
&&\qquad=0,
\end{eqnarray*}
where $\log^{(k)}N$ is $\log(\cdot)$ iterated $k$-times of $N$.
\end{thmm}
Since this paper was uploaded to the arXiv on 2010, the distance bounds
where improved in \cite{cerny2011internal} by Cern{\`y} and Popov.
They managed to get a tight result without the $\log$ correction. Due
to the improvement, the proof of Theorem~\ref{thmmdistthm} is
postponed to Appendix \ref{appB}. Note that since distance bounds require
finding one good path and isoperimetric bounds require a uniform bound
on all subsets, the rest of the results in this paper do not follow the
techniques of \cite{cerny2011internal}.

\begin{thmm}\label{thmmmixing}
Set $u>0$ and let $\tau(G)$ be the (e.g., uniform) mixing time
of a simple random walk on a graph $G$. Then for any $k$,
\[
\lim_{N\to\infty}\mathbf{P}_{\mathbf{0}}\biggl[\max
_{t\ge uN^{d}}\frac
{\tau(\mathcal{R}_{N}(t))}{N^{2}}>\log^{(k)}N\biggr]=0.
\]
\end{thmm}
The two theorems are a direct consequence of Theorem~\ref{thmmRenormalization}
and Theorems \ref{thmmDistancebound}, \ref{thmmmixingbound}, respectively.

Using the same techniques for proving Theorem~\ref{thmmdistthm} and
Theorem~\ref{thmmmixing}, we can show the next result for a random
walk on the range of random interlacements (see Appendix \ref{appC}
for notation).
%
\begin{thmm}\label{thmmInterlacementmain}
Let $u>0$ and $k\in\BN$. Then there exists a constant $C(u,k)$ such
that for $\pr^u_0$ almost every $\CI^u$, and for all $n$ large enough
\[
\mathbf{P}_0^{u}[0,n]\le\frac{C\cdot\log^{(k)}(n)}{n^{d/2}}.
\]
\end{thmm}

This theorem quantifies the result of R\'ath and Sapozhnikov in \cite
{rath2011transience}. R\'ath and Sapozhnikov proved the graph of random
interlacements is transient a.s.

The main purpose of the remainder of the section is to define a $k$-good
configuration, and to establish notation used throughout the paper.

\subsection{Graph notation}\label{subGraph-notation}

Given a graph $G$, we identify a subset of vertices $V$ with its
induced subgraph in $G$. We denote $G\setminus V$, the complement
of $V$ relative to $G$, by $V_{G}^{c}$. Writing $d_{G}(\cdot,\cdot)$
for the graph distance in $G$, we let $d_{G}(\mathbf{v},V)=\inf\{
d_{G}(\mathbf{v},\mathbf{x})\dvtx\mathbf{x}\in V\} $.
For the outer and inner boundary, we respectively write
\begin{eqnarray*}
\partial_{G}(V)\label{pg4}&=&\bigl\{ \mathbf{v}\in G\dvtx d_{G}(
\mathbf{v},V)=1\bigr\},
\\
\partial_{G}^{\mathrm{in}}(V)\label{pg5}&=&\partial_{G}
\bigl(V_{G}^{c}\bigr)=\bigl\{ \mathbf{v}\in G\dvtx d\bigl(
\mathbf{v},V_{G}^{c}\bigr)=1\bigr\}.
\end{eqnarray*}
We often omit $G$ from the notation when the ambient graph is clear.
We say $V$ \emph{is} \emph{connected in} $G$ if any two vertices
in $V$ have a path in $G$ connecting them. $V_{1},V_{2}\subset G$
are connected in $G$ if $V_{1}\cup V_{2}$ is connected in $G$.
Given $V\subset G$, we call a set that is connected in $V$ and is
maximal to inclusion a \emph{component of }$V$.

As noted above, we identify graphs and their vertices. Thus, $\mathbb{Z}^{d}$
denotes the $d$-dimensional integers as well as the graph on these
vertices in which two vertices are connected if they differ by a unit
vector.

Last, if $V\subset\mathbb{Z}^{d},\mathbf{z}\in\mathbb{Z}^{d}$ then
$V\pm\mathbf{z}=\{ \mathbf{x}\pm\mathbf{z}\dvtx\mathbf{x}\in V\} $.

\subsection{Box notation}\label{subBox-notation}

For $\mathbf{x}\in\mathbb{Z}^{d},n>0$, let
\[
\label{pg6}B(\mathbf{x},n)=\bigl\{ \mathbf{y}\in\mathbb{Z}^{d}\dvtx\forall i,1
\le i\le d, -n/2\le\mathbf{x}(i)-\mathbf{y}(i)<n/2\bigr\}.
\]
We write $B(n)$ if $\mathbf{x}$ is the origin, and when length and
center are unambiguous we often just write $B$. Occasionally, we use
lowercase $b$ for a smaller instance of a box. We denote the side
length of a box by $\|B\|$, that is,
\[
\|B\|=|B|^{{1}/{d}}.
\]
Let $\operatorname{sp}\{ B(\mathbf{x},n)\} =\{ B(\mathbf{x}+\sum_{i}\mathbf{e}_{i}k_{i}n,n)\dvtx(k_{1},\ldots,k_{d})\in\mathbb{Z}^{d}
\} $
where $\mathbf{e}_{1},\ldots,\mathbf{e}_{d}$ are the unit vectors
in $\mathbb{Z}^{d}$, that is, all the nonintersecting translations of
$B$ in $\Z^d$. We attach a graph structure to $\operatorname{sp}\{
B(\mathbf{x},n)\} $\label{pg7}
by defining the neighbors of a box $B(\mathbf{x},n)$ as $B(\mathbf
{x}\pm
\mathbf{e}_{i}n,n)$,
$1\le i\le d$. Henceforth, any graph operators on a subset of some
$\operatorname{sp}\{B\}$ refer to this graph structure.
Observe that $\operatorname
{sp}\{ B(\mathbf{x},n)\} $
is isomorphic as a graph to $\mathbb{Z}^{d}$. We fix an isomorphism\label{pg8}
$\Delta\dvtx\operatorname{sp}\{B\}\to\mathbb{Z}^{d}$, $\Delta(B(\mathbf
{x}+\sum_{i}\mathbf{e}_{i}k_{i}n,n))=\mathbf{x}+\sum_{i}\mathbf{e}_{i}k_{i}$.
Using $\Delta$, we
extend the definitions of a box to boxes as well. Thus, for a box $b=b(n)$
and an integer $m>0$, $B_{\Delta}(b,m)$ is a set of $m^{d}$ boxes.
We use a big union symbol to denote internal union, that is, $\bigcup
\mathbf{A}=\{ \mathbf{x}\in A\dvtx A\in\mathbf{A}\} $.
So in the preceding example, we have $\bigcup B_{\Delta}(b,m)=B(mn)$.

To ease the reading, we often refer to boxes that are neighbors under
the above relationship as $\Delta$-\emph{neighbors}, a connected
set of boxes as $\Delta$-\emph{connected}, and a component under
$\Delta$-neighbor relationship a $\Delta$\emph{-component.}

\begin{defn}
Given a box $B(\mathbf{x},n)$, and $\alpha>0$, we write $B^{\alpha}$\label{pg9}
for $B(\mathbf{x},\alpha n)$.
Let
\[
\label{pg10}s(n)=\lceil\log n\rceil^{4}.
\]
We write \label{pg11}$s^{(i)}(n)$ to denote $s(\cdot)$ iterated $i$ times.
\end{defn}
%
\begin{defn}
Let
\[
\label{pg12}\sigma\bigl(B(\mathbf{x},n)\bigr)=\operatorname{sp}\bigl\{ b\bigl(\mathbf {x},s(n)\bigr)
\bigr\} \cap\bigl\{ b\bigl(\mathbf{y},s(n) \bigr)\dvtx\mathbf{y}\in B\bigl(
\mathbf{x},5n+3\lceil\log n\rceil^{6}\bigr)\bigr\}
\]
be the \emph{subboxes} of $B(\mathbf{x},n)$. Note that $B^5\subset
\bigcup\sigma(B)$. $\sigma(B)$ is a collection of sub-boxes of side
length $s(n)$ covering $B^5$; see Figure~\ref{figzerogood} for visualization.
\end{defn}
We write $2^{A}$ for the power set of a set $A$, that is, the
collection of
subsets of $A$. We refer to finite
subsets of $\mathbb{Z}^{d}$ as configurations.

%
\begin{figure}[b]

\includegraphics{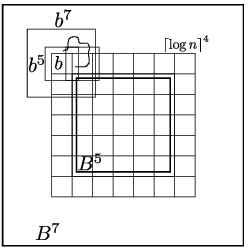}

\caption{$0$-good configuration.}\label{figzerogood}
\end{figure}

\subsection{Percolating configurations}\label{subperc}

Let $c_{a},c_{b}$ be fixed positive constants dependent only on dimension
($c_{a},c_{b}$ are determined in Lemma~\ref{lemAntalPiszforpercinbox}
and Corollary~\ref{corstarboundariestypical}, resp.). $\omega
\in2^{B(n)}$
is a \emph{percolating} configuration, denoted by $\om\in\CP(n)$,\label{pg13} if
there exists a subset which we call a \emph{good
cluster} $\mathcal{C}=\mathcal{C}(\omega)\subset\omega$, connected
in $\omega$ (not necessarily maximal) for which the following properties
hold:
\begin{longlist}[1.]
\item[1.]
$|\mathcal{C}
|>(1-10^{-d})|B(n)|$.
\item[2.]
The largest component in
$B(n)\setminus\mathcal{C}$
is of size less than $(\log n)^{2}$.
\item[3.]
For any $\mathbf{v},\mathbf{w}\in
\mathcal{C}\cap B(n-c_{a}\log n)$
we have $d_{\mathcal{C}}(\mathbf{v},\mathbf{w})<c_{a}(d_{B}(\mathbf
{v},\mathbf{w})\vee\log n)$.
Moreover, a configuration $\omega\in\CP(n)$ admits an isoperimetry property:
\item[4.]
Let $T\subset B(n)$ satisfy
$n^{1/5d}<|T|\le n^{d}/2$, and assume both $T$ and
$B(n)\setminus T$
are connected in $B(n)$. Then $|\partial_{B}T\cap\omega
|,|\partial_{B}^{c}T\cap\omega|>c_{b}|T|^{(d-1)/d}$.
\end{longlist}
The following claim is easy to check.
%
\begin{claim}
\label{clapercolationmonotone}$\mathcal{P}(n)$ is a \emph{monotone}
set, that is, if $\omega\in\mathcal{P}(n)$ and $\omega\subset
\omega
^{+}\subset B(n)$
then $\omega^{+}\in\mathcal{P}(n)$.
\end{claim}

\subsection{$k$-good configurations}\label{subk-good}

Let $c_{h}$ be a fixed positive constant dependent only on dimension
($c_{h}$ is determined in Theorem~\ref{thmmdenseboxesare0good} below).
For $n\in\mathbb{N},\rho>0$, and setting $B=B(n)$, a configuration
$\omega\subset B^7$ belongs to $\Good_{0}^{\rho}(n)$\label{pg14} if and only
if the following properties hold:
\begin{longlist}[1.]
\item[1.]
For each $b\in\sigma(B)$, $|\omega\cap
b|>(\rho c_{h}\wedge\frac{1}{2})|b|$.
\item[2.]
For each $b\in\sigma(B)$, $\omega
\cap b^5$
is connected in $\omega\cap b^7$.
\end{longlist}
%
\begin{rem}
\label{rem0goodconnected}If $\omega\in\Good_{0}^{\rho}(n)$, then
for all $n>(\rho c_{h})^{-{1}/{d}}$: (i) $\omega$ intersects
all $b\in\sigma(B)$ (property~1), and (ii) for
any two $\Delta$-neighbors $b_{1},b_{2}\in\sigma(B)$, since
$b_{2}\subset b_{1}^5$,
$\omega\cap b_{1}$ and $\omega\cap b_{2}$ are connected in $\omega
\cap
b_{1}^{7}$
(property~2). In particular, $\omega\cap B^5$
is connected in $\omega\cap B^7$. See Figure~\ref{figzerogood} for a
graphical explanation.
\end{rem}

Let $\Dns$ be a fixed positive constant dependent only on dimension
($\Dns$ is determined in Theorem~\ref{thmmdenseimpliessubboxdensewhp}).
For $k>0$, $\Good_{k}^{\rho}(n)$ is defined recursively. Given
$\omega
\subset\mathbb{Z}^{d}$, $i\in\BN$
and a box $b(\mathbf{x},m)$, we say $b$ is $(\omega,i,\rho)$\emph{-good}
if $(\omega\cap b^7)-\mathbf{x}\in\Good_{i}^{\rho}(m)$.
Let
\[
\mathcal{S}=\bigl\{ b\in\sigma(B)\dvtx b\mbox{ is }(\omega,k-1,\rho \Dns)\mbox
{-good}\bigr\},
\]
and let $\sigma_{B}=\|\Delta(\sigma(B))\|=|\sigma(B)|^{{1}/{d}}$.
Then $\omega\in\Good_{k}^{\rho}(n)$ if $\omega\in\Good_{0}^{\rho}(n)$
and $\Delta(\mathcal{S})\in\mathcal{P}(\sigma_{B})$. See
Figure~\ref{figkgood} for a graphical explanation.
%
\begin{figure}

\includegraphics{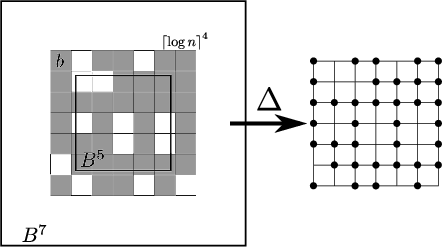}

\caption{$k$-good configuration. All the grey subboxes
are $k-1$-good, that is, $\om\cap b\in \CG_{k-1}^{\rho\Lambda
}(\lceil
\log n\rceil^4)$. The configuration on the right is in $\CP(\sigma
_B)$.}\label{figkgood}
\end{figure}

\subsection{$k$-good torus}\label{subk-goodtorus}

Let $\mathcal{T}=\mathcal{T}(N)$ and fix $\omega\subset\mathcal{T}$.
Let $k\ge0,\rho>0$. We define $(\omega,k,\rho)$\emph{-goodness of
a} \emph{torus}. Let $n=\lceil N/10\rceil$. We call
\[
\Bx T=\operatorname{sp}\bigl\{B(n)\bigr\}\cap\bigl\{B(\mathbf{y},n)\dvtx\mathbf{y}\in
B(N)\bigr\}
\]
the \emph{top-level} boxes for $\mathcal{T}$. Then $\mathcal{T}$
is a $(\omega,k,\rho)$-good torus if all boxes in $\Bx T$ are $(\proj
^{-1}\omega,k,\rho)$-good.

Remark~\ref{rem0goodconnected} therefore implies the following.
%
\begin{rem}
\label{remgoodtorusconnected}If $\mathcal{T}(N)$ is a $(\omega,k,\rho)$-good
torus, then $\omega$ is connected for all $N>C(\rho)$.
\end{rem}

\subsection{Constants}

All constants are dependent on dimension by default and independent
of any other parameter not appearing in their definition. Constants
like~$c,C$ may change their value from use to use. Numbered constants
(e.g., $c_{1},C_{2}$) retain their value in a proof but no more than
that, and constants tagged by a letter $(c_{a},c_{\Dns})$ represent
the same value throughout the paper.


\section{Mixing bound}\label{secMixing-bound}

Given a finite connected graph $G$, let $X(t)$ be a lazy random
walk on $G$. That is, denoting the walk's transition matrix by $p(\cdot,\cdot)$,
for any $\mathbf{v}\in G$ of degree $m$, $p(\mathbf{v},\mathbf{v})=1/2$
and $p(\mathbf{v},\mathbf{w})=1/2m$ for\vadjust{\goodbreak} any neighbor $\mathbf{w}\in
\partial\{\mathbf{v}\}$.
We write $\tau(G)$ for the mixing time of $X(t)$ on $G$, that is,
\[
\tau(G)=\min\biggl\{ n\dvtx\biggl|\frac{p^n(x,y)-\pi(y)}{\pi(y)}\biggr|\le \frac{1}{4},~\forall x,y
\in V(G)\biggr\},
\]
where $\pi$ is the stationary measure of the random walk on $G$. See
\cite{morris2005evolving}
a thorough introduction on mixing times.
%
\begin{thmm}
\label{thmmmixingbound}Let $\omega_{0}\subset\mathcal{T}(N)$, $\rho>0$,
$k\ge1$. There is a $C(k,\rho)$ such that if $\mathcal{T}(N)$ is
a $(\omega_{0},k,\rho)$-good torus then
\[
\tau(\omega_{0})<CN^{2}\log^{(k-1)}N,
\]
where $\log^{(m)}N$ is $\log(\cdot)$ iterated $m$ times of $N$.
\end{thmm}

We begin by stating and proving propositions required for
Corollary~\ref
{corkgoodtorusisopbnd}, then using the corollary we prove Theorem~\ref
{thmmmixingbound}.

Recall the definition of $\Good_{l}^{\rho}(n)$ from Section~\ref{subk-good}.
Let $c_{\rho}=(\rho c_{h}\wedge\frac{1}{2})/3$.
We assume $n$ is large enough such that $\Good_{l}^{\rho}(n)$ is
nonempty, and that for any $\omega\in\Good_{l}^{\rho}(n)$, $\omega
\cap B^5(n)$
is connected in $\omega$ and satisfies $|\omega\cap B^5(n)
|>3c_{\rho}n^{d}$
(see property~1 of $\Good_{0}^{\rho}$ in
Section~\ref{subk-good} and Remark~\ref{rem0goodconnected}). In particular,
there exists a set $S\subset\omega,|S\cap B^5(n)|\wedge
|(\omega\setminus S)\cap B^5(n)|\ge c_{\rho}n^{d}$.

Since $\omega\cap B^5(n)$ is connected in $\omega$, we have the following.
%
\begin{prop}
\label{proinductionbase}For any $l\ge0$ and all large $n$, and
$S\subset\om\in\Good_{l}^{\rho}(n)$
\[
|\partial_\om S|\ge1.
\]
\end{prop}
Next, we bound $|\partial S|$ more accurately. The next theorem is one
of the main results and techniques introduced in this paper.
The theorem proves an almost tight isoperimetric inequality (up to an
iterated $\log$). The main idea of the proof is induction on the number
of iterations (which provide the iterated $\log$) and analyzing the
geometry of renormalized subsets, that is, use the geometrical
properties of the percolation configuration of good subboxes.
%
\begin{thmm}
\label{lemphilowbnd}Let $l\ge0$, $\rho>0$, $\om\in\CG_l^\rho$ and
$S\subset\om$ such that $|S\cap B^5(n)|\wedge|(\omega\setminus
S)\cap
B^5(n)|=r\ge n^{{1}/{3}}$. There exists a constant $c_1(l,\rho)>0$,
such that
%
\begin{equation}
\label{eqphiineq} |\partial_\om S|>c_1(l,
\rho)r^{{(d-1)}/{d}}\bigl(s^{(l)}(n)\bigr)^{1-d}.
\end{equation}
\end{thmm}
\begin{pf}
The proof is by induction on $l$. For $l=0$, since $s^{(0)}(n)=n$,
$|B^5(n)|^{{(d-1)}/{d}}s^{(0)}(n)^{1-d}$ is less
than some $C_{1}$ for any $r\le|B^5(n)|$. Thus, the
base case of $l=0$ is given in Proposition \ref{proinductionbase}
and the connectedness of $\om$ with $c_{1}(0)=C_{1}^{-1}$. Now fix
$l>0,\rho>0$ and assume \eqref{eqphiineq}
is true for $l-1$ with constant $c_{1}(l-1,\rho\Dns)>0$, for all
large $n$ and $n^{1/3}\le r\le|B^5(n)|$.

Our default ambient
graph for $S$ is $\omega$. Thus, for $S\subset\omega$, $S^{c}=\omega
\setminus S$
and $\partial S=\partial_{\omega}S$. Note that as $|S|\ge r$,
if $|\partial S|>|S|^{(d-1)/d}$
we are done. W.l.o.g. assume $|S^c\cap B^5|\ge|S\cap B^5|$ since
$|\partial
_\om S^c|\sim|\partial_\om S|$.

Let $B=B(n)$ and let $m=s(n)$. For $0<\alpha<1$, let
\[
\Bx F=\Bx F(\omega,S,\alpha)=\bigl\{ b\in\sigma(B)\dvtx|b\cap S |\ge\alpha|b\cap
\omega|\bigr\},
\]
be the $\alpha$-filled subboxes. By the pigeon hole principle, there
are ${\alpha}(\rho)<1$, $c_{2}(\rho)>0$, such that
%
\begin{equation}
|\Bx F|<(1-c_{2})\bigl|\sigma(B)\bigr|.\label{equpbndfilled}
\end{equation}

Let $\Bx T=\Bx T(\omega,S)=\{ b\in\sigma(B)\dvtx b\cap S\neq
\varnothing\} $, then $|\Bx T|\ge|S|m^{-d}$.
The proof is separated into cases depending on the size of $F$. We
begin with the case that $|\Bx F|$ is small.

If $|\Bx F|\le\frac{1}{2}|S|m^{-d}$ then by
the trivial lower bound on $\Bx T$, $|\Bx T\setminus\Bx F
|\ge\frac{1}{2}|S|m^{-d}$.
For any box $b\in\Bx T\setminus\Bx F$, we have $\mathbf{x},\mathbf
{y}\in b$
such that $\mathbf{x}\in S,\mathbf{y}\in S^{c}$. Since $\mathbf
{x},\mathbf{y}$
are connected in $\omega\cap b^7$ (property~2
of $\Good_{0}^{\rho}$), $\partial S\cap b^7\ne\varnothing$. For any box
$b\in\sigma(B)$, there are at most $50d$ boxes $b'\in\sigma(B)$ such
that $b^7\cap b'^7\neq\phi$.
Since $|S|\ge n^{1/3}$ and $m^{d}$ is $o(n^{1/4d})$
we have for all large $n$,
\[
|\partial S|\ge\frac{1}{50d}|\Bx T\setminus\Bx F|\ge\frac
{1}{100d}|S|m^{-d}>|S|^{1-{3}/{(4d)}}>|S|^{{(d-1)}/{d}},
\]
and are done with this case.

Our default ambient graph for sets of subboxes is $\sigma(B)$ with
the box ($\Delta$) neighbor relationship (see Section~\ref{subBox-notation}).
Thus, for $\Bx A\subset\sigma(B)$, $\Bx A^{c}=\sigma(B)\setminus\Bx A$,
$\partial\Bx A=\partial_{\sigma(B)}\Bx A$, $\partial^{\mathrm{in}}\Bx
A=\partial
_{\sigma(B)}^{\mathrm{in}}\Bx A$.
We introduce edge boundary notation
\[
\partial^{e}(\Bx Q)=\bigl\{ \bigl\{b,b'\bigr\}\dvtx b
\sim b',b\in\Bx Q,b'\in\Bx Q^{c}\bigr\}.
\]
In the case that remains, $|\Bx F|>\frac{1}{2}|S|m^{-d}$.
Note that any box $b\in\partial\Bx F$ satisfies $|b^5\cap S
|\wedge|b^5\cap S^{c}|>c'(\rho)m^{d}$.
Hence, if we knew that $\Bx F$ was a single $\Delta$-connected component
with a connected complement, we could lower bound $|\partial\Bx
F|$
and use the fact that $\partial\Bx F$ is a typical set (Percolation
property~4) to get that a constant
proportion of $\partial\Bx F$ are $(\omega,l-1,\rho\Dns)$-good
boxes. Together with our induction hypothesis, this would complete the
proof.
%
\begin{eqnarray}
|\partial S|&\ge&\bigl|\partial S\cap\partial\Bx F\cap
_{b\in\CG
_{l-1}^{\lam
\rho}(m)}\{b\}\bigr|\ge c|\partial\Bx F|m^{d-1}
\bigl(s^{l-1}(m)\bigr)^{1-d}\nonumber
\\
&\ge &c|\Bx F|^{{(d-1)}/{d}}m^{d-1}\bigl(s^{l}(n)
\bigr)^{1-d}\ge\frac
{c}{2}|S|^{{(d-1)}/{d}}m^{1-d}m^{d-1}
\bigl(s^{l-1}(m)\bigr)^{1-d}
\\
&=& \frac{c}{2}|S|^{{(d-1)}/{d}}\bigl(s^{l-1}(m)
\bigr)^{1-d}.\nonumber
\end{eqnarray}

$\Bx F$ is not in general so nice. However, being of size greater
than $\frac{1}{2}|S|m^{-d}$ implies there is a $c_{3}(\rho)>0$
and a set $\SubBoxSet K=\SubBoxSet K(\Bx F)\subset2^{\sigma(B)}$
with the following properties for all large $n$, allowing us to make
a similar isoperimetric statement:
%
\begin{eqnarray}
&\displaystyle\sum_{\Bx f\in\SubBoxSet K}\bigl(|\Bx f|\wedge\bigl|\Bx f^{c}\bigr|
\bigr)\ge c_{3}|S|m^{-d},\label{eqFalphalarge}&
\\
&\forall\Bx f\in\SubBoxSet K,\qquad \partial\Bx f\subset\Bx F^{c},\qquad \partial
^{\mathrm{in}}\Bx f\subset\Bx F,\label{eqedgeoffexactlyoneisfilled}&
\\
&\forall\Bx f_{1},\Bx f_{2}\in\SubBoxSet K,\qquad\Bx
f_{1}\neq\Bx f_{2}\quad\implies\quad \partial^{e}\Bx
f_{1}\cap\partial^{e}\Bx f_{2}=
\varnothing,\label
{eqFalphaboundariesdistinct}&
\\
&\forall\Bx f\in\SubBoxSet K,\qquad \Bx f,\Bx f^{c}\mbox{ are }\Delta\mbox
{-connected},\label{eqFalphaconn}&
\\
&n^{1/5d}<|\Bx f|\wedge\bigl|\Bx f^{c}\bigr|\le\bigl|\sigma
(B)\bigr|/2.\label{eqfminfcrightsize}&
\end{eqnarray}

First, we show how the proof follows from the existence of $\SubBoxSet K$.
Let $\Bx G=\Bx G(\omega,l,\rho)$ be the set of $(\omega,l-1,\rho
\Dns)$-good
subboxes in $\sigma(B)$. By \eqref{eqFalphaconn}, \eqref
{eqfminfcrightsize}
and Percolation property~4 (see
Section~\ref{subperc}), for all large enough $n$, for any $\Bx f\in
\SubBoxSet K$,
$|\partial\Bx f\cap\Bx G|>c_{b}(|\Bx f|\wedge
|\Bx f^{c}|)^{(d-1)/d}$.
Let $\SubBoxSet K^{\partial}=\{ \partial\Bx f\dvtx\Bx f\in
\SubBoxSet
K\} $.
By \eqref{eqFalphaboundariesdistinct}, for any $b\in\sigma(B)$,
$|\{ \Bx f\in\SubBoxSet K^{\partial}\dvtx b\in\Bx f\}
|\le2d$.
Thus,
\[
\Bigl|\bigcup\SubBoxSet K^{\partial}\cap\Bx G\Bigr|\ge\frac{1}{2d}\sum
_{\Bx f\in\SubBoxSet K}c_{b}\bigl(|\Bx f|\wedge\bigl|\Bx
f^{c}\bigr|\bigr)^{(d-1)/d}.
\]
By subadditivity of $x^{\beta}$ where $\beta<1$ and \eqref{eqFalphalarge}
this gives
%
\begin{equation}
\label{eqksize} \Bigl|\bigcup\SubBoxSet K^{\partial}\cap\Bx G\Bigr|\ge c\biggl[\sum
_{\Bx f\in\SubBoxSet K}\bigl(|\Bx f|\wedge\bigl|\Bx f^{c} \bigr|
\bigr)\biggr]^{(d-1)/d}\ge c'\frac{|S|^{(d-1)/d}}{m^{d-1}}.
\end{equation}

Let $\Bx A\subset\bigcup\SubBoxSet K^{\partial}\cap\Bx G$, be a subset
of size $|A|>c|\bigcup\SubBoxSet K^{\partial}\cap\Bx G|$, satisfying
that for any distinct $b_{1},b_{2}\in\Bx A$,
$b_{1}^7\cap b_{2}^7=\varnothing$, for example, $A=(\bigcup\SubBoxSet
K^{\partial}\cap\Bx G)\cap\Delta^{-1}(20\cdot\Z^d)$. By \eqref
{eqedgeoffexactlyoneisfilled},
for any $b\in\Bx A$, $b\in\Bx F^{c}$ but has a $\Delta$-neighbor
$b'\in\Bx F$, implying $|S\cap b^5|\wedge|S^{c}\cap
b^5|\ge c(\hat{\alpha},\rho)m^{d}=\hat{c}m^{d}$.
Since $\Bx A\subset\Bx G$, using our induction assumption and that
$|S|>r$,
%
\begin{eqnarray}
|\partial S|&\ge&\Bigl|\partial S\cap\bigcup\Bx F\Bigr|\stackrel{\scriptsize{\protect\eqref
{eqedgeoffexactlyoneisfilled}}} {\ge}|\partial S\cap A|\stackrel {\scriptsize{\protect
\eqref{eqksize}}} {\ge}c'\frac{|S|^{(d-1
)/d}}{m^{d-1}}m^{d-1}
\bigl(s^{l-1}(m)\bigr)^{1-d}
\nonumber
\\[-8pt]
\\[-8pt]
\nonumber
& =& c'|S|^{{(d-1)}/{d}}
\bigl(s^{l}(n)\bigr)^{1-d}
\end{eqnarray}
and we are done.

We return to proving the existence of $\SubBoxSet K$.

Recall, a $\Delta$-component of a set $\Bx Q\subset\sigma(B)$ is
a maximal connected component in $\Bx Q$ according to the box neighbor
relationship (see Section~\ref{subBox-notation}). Let $\SubBoxSet F$ be
the set of $\Delta$-components of $\Bx F$. Since $\Bx F\neq\sigma(B)$,
for any $\Bx f\in\SubBoxSet F$, there exists $b\in\Bx f$ with a
$\Delta$-neighbor $b'\in\Bx f^{c}$, such that $b'\subset b^5$.
As before, by property~2 of $\Good_{0}^{\rho}$
(see Section~\ref{subk-good}), $b^7\cap\partial S\neq\varnothing$. Letting
$\Bx F^{\partial}=\{ b\in\Bx F\dvtx b^7\cap\partial S\neq
\varnothing
\} $,
we then have $|\Bx F^{\partial}|\ge|\SubBoxSet F|$.
Since we can extract a subset $\Bx A\subset\Bx F^{\partial}$
where $|\Bx A|>c|\Bx F^\partial|$, and for any distinct
$b_{1},b_{2}\in
\Bx A$, $b_{1}^7\cap b_{2}^7=\varnothing$,
we only need deal with the case $|\SubBoxSet F|<|S
|^{1-{1}/{(2d)}}$.
Let $\SubBoxSet H$ be the set of $\Delta$-components of $\Bx F^{c}$.
In the same way, we may assume $|\SubBoxSet H|<|S
|^{1-{1}/{(2d)}}$.
By \eqref{equpbndfilled}, $|\Bx F^{c}|>c_{2}|\sigma
(B)|>2c_{3}|S|m^{-d}$.
We also assumed $|\Bx F|>\frac{1}{2}|S|m^{-d}$,
so w.l.o.g. $c_{3}<1/4$ and
%
\begin{equation}
\bigl|\Bx F^{c}\bigr|, |\Bx F|>2c_{3}|S |m^{-d}.\label{eqFalphaandcomplementlarge}
\end{equation}

Let $\widehat{\SubBoxSet F}=\{ \Bx f\in\SubBoxSet F\dvtx|\Bx
f|\ge  c_{3}|S|^{{1}/{(2d)}}m^{-d}\} $
and let $\widehat{\SubBoxSet H}=\{ \Bx h\in\SubBoxSet H\dvtx|\Bx
h|\ge\break  c_{3}|S|^{{1}/{(2d)}}m^{-d}\} $.
We assumed $|\SubBoxSet F|,|\SubBoxSet H|<
|S|^{1-{1}/{(2d)}}$,
and thus $\bigcup(\SubBoxSet F\setminus\widehat{\SubBoxSet
F}),\bigcup(\SubBoxSet H\setminus\widehat{\SubBoxSet
H})<c_{3}|S|m^{-d}$.
So, from \eqref{eqFalphaandcomplementlarge}, we get
%
\begin{equation}
\Bigl|\bigcup\widehat{\SubBoxSet F}\Bigr|,
\Bigl|\bigcup\widehat {\SubBoxSet H}\Bigr|>c_{3}|S|m^{-d}.\label
{eqFalphacomponentslarge}
\end{equation}
Let
\[
\SubBoxSet K=\bigl\{ \Bx f\subset\sigma(B)\dvtx\Bx f\mbox{ is a }\Delta \mbox
{-component of }\Bx h^{c}, \Bx h\in\SubBoxSet H,|\Bx f |\wedge\bigl|\Bx
f^{c}\bigr|>c_{3}|S|^{
{1}/{(2d)}}m^{-d}\bigr\}.
\]
Let $U\dvtx\SubBoxSet K\to\SubBoxSet H$ where for $\Bx f\in
\SubBoxSet K$,
$U(\Bx f)$ is the unique element in $\SubBoxSet H$ for which $\Bx f$
is a $\Delta$-component of $U(\Bx f)^{c}$. For each $\Bx f\in
\SubBoxSet K$,
$\partial\Bx f\subset U(\Bx f)\subset\Bx F^{c}$ and because $U(\Bx f)$
is a component of $\Bx F^{c}$, $\partial^{\mathrm{in}}\Bx f\subset\Bx F$,
giving us \eqref{eqedgeoffexactlyoneisfilled}. Let $\Bx h\in
\widehat{\SubBoxSet H}$.
For any $\Bx f\in\widehat{\SubBoxSet F}$, $\Bx f\subset\Bx h^{c}$
and thus $\Bx f$ is contained in some $\Delta$-component of $\Bx h^{c}$
which we denote $\hat{\Bx f}$. Since $\Bx h\subset\hat{\Bx f}^{c}$
and $\Bx f\subset\hat{\Bx f}$ we get $\hat{\Bx f}\in\SubBoxSet K$
and in particular, $\hat{\Bx f}\in U^{-1}(\Bx h)$. Thus, for any $\Bx
h\in\widehat{\SubBoxSet H}$,
$\bigcup\widehat{\SubBoxSet F}\subset\bigcup U^{-1}(\Bx h)$. In
Figure~\ref{figcompofcomp}, we give an example of some $\Bx F$ and the
resulting {\small$\SubBoxSet K$}.

\begin{figure}

\includegraphics{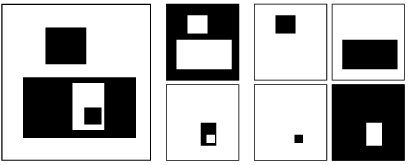}

\caption{Example of $\Bx F$ and resulting
$\SubBoxSet K=\{\mathbf{f}_{1},\mathbf{f}_{2},\mathbf{f}_{3},\mathbf{f}_{4}\}$.
$\left[ \mathbf{F}\hspace*{4pt} {\mathbf{h}_{1} \enskip \mathbf{f}_{1} \enskip \mathbf{f}_{2}\atop
\mathbf{h}_{2} \enskip \mathbf{f}_{3}\enskip \mathbf{f}_{4}}
\right]$
where the sets are in black and $ \mathbf{h}_{1}=U(\mathbf{f}_{1})=U(\mathbf{f}_{2})$,
$\mathbf{h}_{2}=U(\mathbf{f}_{3})=U(\mathbf{f}_{4})$.}\label{figcompofcomp}
\end{figure}

We regroup terms in the sum and use the fact that for any $\Bx h\in
\SubBoxSet H,\Bx f\in U^{-1}(\Bx h)$,
we have $\Bx h\subset\Bx f^{c}$ to get:
\[
\sum_{\Bx f\in\SubBoxSet K}\bigl(|\Bx f|\wedge\bigl|\Bx f^{c}\bigr|
\bigr)\ge\sum_{\Bx h\in\widehat{\SubBoxSet H}}\sum_{\Bx f\in
U^{-1}(\Bx h)}
\bigl(|\Bx f|\wedge\bigl|\Bx f^{c}\bigr|\bigr)\ge\sum
_{\Bx h\in\widehat{\SubBoxSet H}}\sum_{\Bx f\in U^{-1}(\Bx h)}\bigl( |\Bx f|\wedge|
\Bx h|\bigr).
\]
If there exists $\Bx h^{*}\in\widehat{\SubBoxSet H}$ such that for
any $f\in U^{-1}(\Bx h^{*})$, $|\Bx h^{*}|\ge|\Bx
f|$,
we have
\[
\sum_{\Bx f\in\SubBoxSet K}\bigl(|\Bx f|\wedge\bigl|\Bx f^{c}\bigr|
\bigr)\ge\sum_{\Bx f\in U^{-1}(\Bx h^{*})}|\Bx f |=\Bigl|\bigcup U^{-1}
\bigl(\Bx h^{*}\bigr)\Bigr|\ge\Bigl|\bigcup\widehat {\SubBoxSet F}\Bigr|.
\]
If none such exists, then
\[
\sum_{\Bx f\in\SubBoxSet K}\bigl(|\Bx f|\wedge\bigl|\Bx f^{c}\bigr|
\bigr)\ge\sum_{\Bx h\in\widehat{\SubBoxSet H}}|\Bx h |=\Bigl|\bigcup\widehat{\SubBoxSet
H}\Bigr|.
\]
Thus, from \eqref{eqFalphacomponentslarge}, we get \eqref{eqFalphalarge}.
Next, for $\Bx f_{1}\in\SubBoxSet K$, any edge $\{ b,\hat{b}
\} \in\partial^{e}\Bx f_{1}$
satisfies w.l.o.g. $\hat{b}\in U(\Bx f_{1})$ and $b\in\Bx f_{1}$. Thus,
if $\Bx f_{2}\in\SubBoxSet K$ shares the edge $\{ b,\hat{b}
\} $
with $\Bx f_{1}$, then $U(\Bx f_{1})=U(\Bx f_{2})$ and since $b\in\Bx
f_{1}\cap\Bx f_{2}$
and both are $\Delta$-components of $U(\Bx f_{1})^{c}$, we have
$\Bx f_{1}=\Bx f_{2}$, giving us \eqref{eqFalphaboundariesdistinct}.
To get \eqref{eqFalphaconn}, let $\Bx h\in\SubBoxSet H$, and let
$\Bx h^{c}=\Bx f_{1}\cup\cdots\cup\Bx f_{n}$ where $\Bx f_{i}$ are
the $\Delta$-components of $\Bx h^{c}$. Then $\forall i,\partial\Bx
f_{i}\subset\Bx h$,
and since $\Bx h$ is connected, $\Bx f_{i},\Bx f_{j}$ are connected
in $\Bx f_{i}\cup\Bx f_{j}\cup\Bx h$ for any $i,j$. This implies
$\Bx f_{i}^{c}=\Bx h\cup\Bx f_{1}\cup\cdots\cup\Bx f_{n}\setminus
\Bx f_{i}$
is $\Delta$-connected for any $i$. Last, since $|\Bx f|\wedge
|\Bx f^{c}|>c_{3}|S|^{{1}/{(2d)}}m^{-d}$
and $m^{d}$ is $o(n^{1/20d)}$, we get \eqref{eqfminfcrightsize}.
\end{pf}
In the below corollary, we transfer the isoperimetric bounds on
$\varphi$
from the setting of a box to a torus. The main idea of the proof is
to show that given any large set $S$ in a $(\omega,k,\rho)$-good
torus, there are two neighboring top-level boxes which have a large
intersection with $S$ and $\omega\setminus S$.
%
\begin{cor}
\label{corkgoodtorusisopbnd}Let $\omega\subset\mathcal{T}(N)$.
If $\mathcal{T}(N)$ is a $(\omega,k,\rho)$-good torus then for all
large enough $N$, and $r\ge N$
\begin{eqnarray*}
\label{pg15}\hat{\phi}(r) & = & \inf\biggl\{ \frac{|\partial_{\omega}S|}{
|S|}\dvtx S\subset
\omega,N^{{1}/{3}}\le|S|\le r\wedge \biggl(1-\frac{1}{4d}\biggr)|
\omega|\biggr\}
\\
& > & c(k,\rho)\frac{r^{-1/d}}{(s^{(k)}(N))^{d-1}}.
\end{eqnarray*}
\end{cor}
\begin{pf}
Let $\omega^{+}=\proj^{-1}(\omega)\cap B^{3}(N)$. Recall
from Section~\ref{subk-goodtorus} that all top-level boxes for $\mathcal{T}(N)$
are $(\omega^{+},k,\rho)$-good, so by property~1
of $\Good_{0}^{\rho}$, for any top-level box $B$, there is a
$c_{1}(\rho)>0$
such that
%
\begin{equation}
\bigl|B\cap\omega^{+}\bigr|>c_{1}N^{d}.\label{eqtopleveldense}
\end{equation}
Fix $r\ge N$. By construction, $\frac{1}{2d}|\omega^{+}
|=|\omega|\ge|B\cap\omega^{+}|$
for any top-level box $B$. We assume that $N$ is large enough so
that $c_{1}N^{d-1}>4d$, and $|B\cap\omega^{+}|>4\,dN$.
In particular, this implies that the infimum is not on an empty set.
Let $S$ satisfy the conditions to be a candidate for the infimum
in $\hat{\phi}(r)$ and extend it to $S^{+}=\proj^{-1}(S)\cap B^{3}(N)$.
Let $\hat{r}=|S|\wedge|\omega\setminus S|$.
Again by \eqref{eqtopleveldense}, for each top-level box $B$,
$|B\cap S^{+}|\vee|B\cap(\omega^{+}\setminus
S^{+})|\ge\frac{1}{2}c_{1}N^{d}>c_{2}\hat{r}$.
On the other hand, since there are $10^{d}$ top-level boxes whose
union covers $B(N)$, by the pigeonhole principle, there must be some
box $B$ for which $|B\cap S^{+}|\ge10^{-d}|S|$
and likewise a box $B'$ for which $|B'\cap(\omega^{+}\setminus
S^{+})|\ge10^{-d}|(\omega\setminus S)|$.
Let $c_{3}=c_{2}\wedge10^{-d}$. Since the top-level boxes are $\Delta
$-connected,
there are two $\Delta$-neighboring top-level boxes $B_{1},B_{2}$
such that $|B_{1}\cap S^{+}|,|B_{2}\cap(\omega
^{+}\setminus S^{+})|\ge c_{3}\hat{r}$.
This implies $|B_{1}^5\cap S^{+}|\wedge|B_{1}^5\cap
(\omega^{+}\setminus S^{+})|\ge c_{3}\hat{r}$.
By construction, $|\partial_{B_{1}^{7}\cap\omega^{+}}S^{+}
|\le|\partial_{\omega}S|$.
Since $B_{1}$ is $(\omega^{+},k,\rho)$-good, we can use Theorem~\ref
{lemphilowbnd}
to lower bound $|\partial_{B_{1}^7\cap\omega^{+}}S^{+}|$
by $c\hat{r}^{{(d-1)}/{d}}(s^{(k)}(N))^{1-d}$
for all large~$N$. Note that as $|\omega|>4\,dN$, implying
$|\omega\setminus S|\ge N$, we have $\hat{r}\ge N$.
Since $|\omega\setminus S|\ge\frac{1}{4d}|\omega
|>\frac{1}{4d}|S|$,
we can bound $|S|$, the denominator in the infimum, from
above by $4d\hat{r}$, giving us $\frac{|\partial_{\omega}S
|}{|S|}\ge c\hat{r}^{-1/d}(s^{(k)}(N))^{1-d}$.
Since $\hat{r}\le r$ we are done.
\end{pf}
We now proceed to prove the main theorem of this section.
\begin{pf*}{Proof of Theorem~\ref{thmmmixingbound}}
The following proof makes assumptions which are valid for all but
a finite number of $N$, and those are resolved by the large constant
above. Note that $\omega_{0}$ is viewed as a subgraph of $\mathcal{T}(N)$
as far as connectivity is concerned. We present an upper bound to the
mixing time $\tau$ of
$X(t)$ using average conductance, a method developed in \cite{lovasz1999faster}
and refined in subsequent papers.

We follow notation of \cite{morris2005evolving}. Let $\pi(\cdot)$
be the stationary distribution of $X(t)$ and for $\mathbf{x},\mathbf
{y}\in\omega_{0}$
let $Q(\mathbf{x},\mathbf{y})=\pi(\mathbf{x})p(\mathbf{x},\mathbf{y})$.
For $S,A\subset\omega_{0}$ let $Q(S,A)=\sum_{\mathbf{s}\in
S,\mathbf{a}\in A}Q(\mathbf{s},\mathbf{a})$.
Let $\Phi_{S}=\frac{Q(S,S^{c})}{\pi(S)}$\label{pg16} and let $\Phi(u)=\inf\{
\Phi_{S}\dvtx0<\pi(S)\le u\wedge\frac{1}{2}\} $.\label{pg17}
Let $\pi_{*}=\min_{\mathbf{x}\in\omega_{0}}\pi(\mathbf{x})$.

By \cite{morris2005evolving},
%
\begin{equation}
\tau=\tau\biggl(\omega_{0},\frac{1}{4}\biggr)\le1+\int
_{4\pi_{*}}^{16}\frac
{4\,du}{u\Phi^{2}(u)}.\label{eqtaustatdist}
\end{equation}
Recall the notation from Section~\ref{subGraph-notation}. In this proof,
our ambient graph is $\omega_{0}$ and thus $S^{c}=\omega_{0}\setminus S$
and $\partial S=\partial_{\omega_{0}}S$. To simplify notation in
the proof, we restate \eqref{eqtaustatdist} in terms of internal
volume and boundary size.

For $S\subset\omega_{0}$, if $\pi(S)\le u$, then we have by definition
$u\ge\sum_{\mathbf{v}\in S}\deg(\mathbf{v})\times [\sum_{\mathbf{v}\in\omega_{0}}\deg(\mathbf{v})]^{-1}$.
Using the bound on degree and connectedness of $\om_0$, we get $
|S|\le2ud|\omega_{0}|$.
In the same way, $2d\frac{|S^{c}|}{|\omega_{0}
|}>\pi(S^{c})\ge1-u$
which gives $|S|\le(1-\frac{1}{2d}(1-u))
|\omega_{0}|$,
and thus for $u\le\frac{1}{2}$,
%
\begin{equation}
|S|\le2ud|\omega_{0}|\wedge\biggl(1-\frac{1}{4d}\biggr) |
\omega_{0}|.\label{equpbndonsintermsofu}
\end{equation}

Let $\phi_{S}=\frac{|\partial S|}{|S|}$. Since
$\omega_{0}$ is a bounded degree graph and $\mathbf{x}\sim\mathbf
{y}\iff
\frac{1}{4d}\le p(\mathbf{x},\mathbf{y})\le\frac{1}{2}$,
for some $C(d)$ and all $S\subset\omega_{0}$ we have $\phi_{S}<C\Phi_{S}$.
%
%
{} Let $\phi(r)=\inf\{ \phi_{S}\dvtx0<|S|\le r\wedge(1-\frac
{1}{4d})|\omega_{0}|\} $.
Then by \eqref{equpbndonsintermsofu} the infimum in $\phi
(2ud|\omega_{0}|)$
is on a larger set than the infimum in $\Phi(u)$ giving us $\phi
(2ud|\omega_{0}|)<C\Phi(u)$.
Thus, by the change of variables $r=2ud|\omega_{0}|$ in
\eqref{eqtaustatdist}, we get%
%
%
%
\begin{equation}
\tau<C\int_{1}^{32dN^{d}}\frac{dr}{r\phi^{2}(r)}.\label{eqmixtimeineq}
\end{equation}

We continue by showing that for our purposes, a rough estimate of
$\phi_{S}$ for sufficiently small sets $S$ is enough. Let
\[
\hat{\phi}(r)=\inf\biggl\{ \phi_{S}\dvtx N^{{1}/ {3}}
\le|S|\le r\wedge\biggl(1-\frac{1}{4d}\biggr)|\omega_{0}|\biggr
\},
\]
where the infimum of an empty set is $\infty$. Since $\omega_{0}$
is connected (see Remark~\ref{remgoodtorusconnected}), $\phi(r)\ge1/r$
for any $1\le r<|\omega_{0}|$. For large $N$, by property~1 of $\Good_{0}^{\rho}$ (see Section~\ref{subk-good}),
$N<(1-\frac{1}{4d})|\omega_{0}|$. Thus,
\begin{eqnarray*}
\phi(r) & = & \inf\bigl\{ \phi_{S}\dvtx|S|\le r\wedge
N^{ {1}/ {3}}\bigr\} \wedge\hat{\phi}(r)
\\
& \ge& \bigl[r^{-1}\vee N^{-{1}/{3}}\bigr]\wedge\hat{\phi}(r).
\end{eqnarray*}
By Corollary~\ref{corkgoodtorusisopbnd} below, $\hat{\phi
}(r)>c(k,\rho)(s^{(k)}(N))^{1-d}r^{-1/d}$.
Integrating \eqref{eqmixtimeineq} with the above lower bound for
$\phi(r)$, we thus get
\begin{eqnarray*}
\tau& < & C\int_{1}^{32N^{d}}\frac{dr}{r(N^{-{1}/{3}}
)^{2}}+C\int
_{10^{d}N^{{1}/{3}}}^{32N^{d}}\frac{dr}{r\hat{\phi
}^{2}(r)}
\\
& < & o\bigl(N^{2}\bigr)+C\bigl(s^{(k)}(N)
\bigr)^{2d-2}N^{2}=o\bigl(\bigl(\log ^{(k-1)}N\bigr)
\bigr)N^{2}
\end{eqnarray*}
as required.
\end{pf*}


\section{High density
percolation
percolates}\label{secSupercritical-percolation-percolates}

This section presents results used in the renormalization arguments
of Section~\ref{secRenormalization}. See Section~\ref{subperc} for
the properties of percolating configuration.
Note that many of the lemmas in this section deal with i.i.d. Bernoulli
percolation.
%
\begin{lem}
\label{lemSCPercpercolates}For $n\in\mathbb{N}$, let $\{
Y(\mathbf{z})\} _{\mathbf{z}\in B(n)}$
be i.i.d. $\{ 0,1\} $ r.v.'s, and write $\mathcal{S}(n)=
\{ \mathbf{z}\in B(n)\dvtx Y(\mathbf{z})=1\} $
for the random support of $Y$. Then there are dimensional dependent
constants, $C>0$ and $p_{b}<1$, such that if $\Pr[Y(\mathbf
{0})=1]=p_{b}$,
\[
\Pr\bigl[\mathcal{S}(n)\in\mathcal{P}(n)\bigr]\ge1-
\frac{C(\log n)^{d-1}}{n^d}.
\]
%
\end{lem}
\begin{pf}
Lemmas \ref{lemlargestcomponent}, \ref{lemupbndsizecmpincomplement},
\ref{lemAntalPiszforpercinbox} and Corollary~\ref
{corstarboundariestypical}
prove Percolation properties~1--4, respectively.
\end{pf}
The next lemma assures a percolation configuration given a finite range
dependance requirement.
%
\begin{cor}
\label{corDominatingfieldpercolates}For $n\in\mathbb{N}$, let
$\{ Y(\mathbf{z})\} _{\mathbf{z}\in B(n)}$ be $\{
0,1\} $
r.v.'s, not necessarily i.i.d., and write $\mathcal{S}(n)=\{
\mathbf{z}\in B(n)\dvtx Y(\mathbf{z})=1\} $
for the random support of $Y$. Assume the r.v.'s have the property
that for any $\mathbf{x}\in B(n)$ and any $A\subset B(n)\setminus
b(\mathbf{x},20)$,
\[
\Pr\bigl[Y(\mathbf{x})=1 | \mathcal{S}\cap B(n)\setminus b(\mathbf {x},20)=A
\bigr]>p_{d},\vadjust{\goodbreak}
\]
where $p_{d}<1$ is a fixed constant dependent only on $p_{b}$ (from
Lemma~\ref{lemSCPercpercolates}) and dimension.

Then for all $p<1$, there is a $C(p)<\infty$ such that for all $n>C$,
\[
\Pr\bigl[\mathcal{S}(n)\in\mathcal{P}(n)\bigr]>p.
\]
\end{cor}
\begin{pf}
The domination of product measures result of Liggett, Schonmann and
Stacey \cite{liggett1997domination}, implies there is a $p_{d}<1$
for which $\mathcal{S}(n)$ stochastically dominates an i.i.d. product
field with density $p_{b}$ on $B(n)$. Lemma~\ref{lemSCPercpercolates}
tells us that the probability such an i.i.d. field belongs to $\mathcal{P}(n)$
approaches one as $n$ tends to infinity. Since Percolation properties
are monotone (Claim~\ref{clapercolationmonotone}), we are done.%
%
\end{pf}
Write $\mathbf{P}_{p}[\cdot]$ for the law that makes $
\{ Y(\mathbf{z})\} _{\mathbf{z}\in\mathbb{Z}^{d}}$
i.i.d. $\{ 0,1\} $ r.v.'s where $Y(\mathbf{z})=1$ w.p. $p$.
Let $B=B(n)$ and write $\mathcal{S}=Y^{-1}(1)\cap B$ for the random
set of open sites in $B$. Denote by $\mathcal{C}$ the largest connected
component in $\mathcal{S}$.

We write a consequence of Theorem~1.1 of \cite{deuschel1996surface}.
One can find the proof in the appendix of \cite
{procaccia2011concentration}.
%
\begin{lem}
\label{lemlargestcomponent}There is a $p_{0}(d)<1$ such that for every
$p>p_{0}$, there exists a $c>0$ such that
\[
\mathbf{P}_{p}\bigl[|\mathcal{C}|<\bigl(1-10^{-d}
\bigr)\bigl|B(n)\bigr |\bigr]\le ce^{-cn}.
\]
\end{lem}
%
\begin{defn}
Let $B^{*}$ be the graph of $B(n)$ where we add edges between any
two vertices in $B$ of $l_{\infty}$ distance one. We call a set
$A$ in $B$ \emph{$\ast$-connected}, if it is connected in $B^{*}$.
\end{defn}
%
\begin{lem}
\label{lemLargestarsetsaretypical}There is a $\beta_1,\beta
_2,c_{d}(d)>0,\beta_3,p_{1}(d)<1$
and $C(d)<\infty$ such that for any $p>p_{1}$
%
\begin{equation}
\mathbf{P}_{p}\bigl[\exists A,\ast\mbox{-connected}, |A|>C
\log n,|A\cap\mathcal{S}|<c_d|A| \bigr]\le\beta_1e^{-\beta_2n^{\beta_3}}\label{eqexistsAfewopen}.
\end{equation}
\end{lem}
\begin{pf}
Fix a vertex $\mathbf{v}\in B$ and let $A$ be $\ast$-connected
such that $\mathbf{v}\in A$ and $|A|=k$. The number of
such components is bounded by $(3^{d}-1)^{2k}<e^{\hat{c}k}$.
To see this, fix a spanning tree for each such set and explore the
tree starting at $\mathbf{v}$ using a depth first search. Each edge
is crossed at most twice and at each step the number of directions
is bounded by the degree. Using Cram\'er's theorem for i.i.d. (large
deviations), for large enough $p_{1}(d)<1$ and small enough
$p_1(d)>c_{d}(d)>0$, $\mathbf{P}_{p}[|A\cap\mathcal{S}
|<c_{d}|A|]<\exp(-2\hat{c}|A|)$.
To bound the probability of the event in~\eqref{eqexistsAfewopen},
we union bound over $\ast$-connected components larger than $n^{1/3}$
that contain a fixed vertex in $B$ to get
\[
n^{d}\sum_{k\ge n^{1/3}}e^{\hat{c}k}e^{-2\hat{c}k},
\]
which is smaller than $\beta_1e^{-\beta_2n^{\beta_3}}$ for appropriate
constants.
\end{pf}
%
\begin{cor}
\label{corstarboundariestypical}There is a $c_{b}>0$, $C_{b}<\infty$
such that for all $p>p_{1}(d)$, with probability greater than $1-\beta
_1e^{-\beta_2n^{\beta_3}}$, any connected set $A\subset B$
such that $B\setminus A$ is also connected and $C_{b}\log^{{d}/{(d-1)}}n<|A|\le n^{d}/2$.
\[
|\partial_{B}A\cap\mathcal{S}|,\bigl|\partial_{B}^{\mathrm{in}}A
\cap \mathcal{S}\bigr|>c_{b}|A|^{{(d-1)}/{d}}.
\]
\end{cor}
\begin{pf}
By Lemma~2.1(ii) in \cite{deuschel1996surface}, $\partial
_{B}A,\partial
_{B}^{c}A$
are $\ast$-connected. By well-known isoperimetric inequalities for
the grid; see, for example, Proposition~2.2 in \cite{deuschel1996surface},
there is a $c_{I}>0$ such that for $|A|\le n^{d}/2$,
$|\partial_{B}A|,|\partial_{B}^{c}A|>c_{I}
|A|^{{(d-1)}/{d}}$.
For appropriate $C_{b}$, $c_{I}|A|^{{(d-1)}/{d}}>C_{a}\log n$,
and thus Lemma~\ref{lemLargestarsetsaretypical} gives the result
with $c_{b}=c_{I}c_{d}$.
\end{pf}
%
\begin{lem}
\label{lemupbndsizecmpincomplement}Let $\mathcal{K}$ denote
the largest connected component in $B\setminus\mathcal{C}$. There
are $c>0$, $\gamma<1$ and $p_{2}(d)<1$ such that for all $p>p_{2}$,
\[
\mathbf{P}_{p}\bigl[|\mathcal{K}|>
\log^{2}n\bigr]\le e^{-cn^\gamma}.
\]
%
\end{lem}
\begin{pf}
Choose a component $\mathcal{K}$ of $B\setminus\mathcal{C}$. Since
$\mathcal{C}$ is connected and $\mathcal{K}$ is maximal, $B\setminus
\mathcal{K}$
is also connected. This easy fact is proved in Theorem~\ref{lemphilowbnd}.
From Lemma~\ref{lemlargestcomponent}, we have for $p>p_{0}$, $k=
|\mathcal{K}|<|B|/2$.
It is not true in general that $Y(\mathcal{K})=0$ but since $\partial
_{B}^{\mathrm{in}}\mathcal{K}$
separates $\mathcal{K}$ from $\mathcal{C}$, $Y(\partial
_{B}^{\mathrm{in}}\mathcal{K})=0$.
Thus, from Corollary~\ref{corstarboundariestypical}, for $p_{2}>p_{1}$,
w.h.p., $|\mathcal{K}|<C_{b}\log^{{d}/{(d-1)}}n$.
\end{pf}
%
\begin{lem}
\label{lemAntalPiszforpercinbox}There is a $c_{a}>0$ such
that for $p>p_{1}>p_{c}$
\[
\mathbf{P}_{p}\bigl[\exists\mathbf{v},\mathbf{w}\in\mathcal{C}\cap
B(n-c_{a}\log n), d_{\mathcal{C}}(\mathbf{v},\mathbf
{w})>c_{a}\bigl(d_{B}(\mathbf{v},\mathbf{w})\vee\log n
\bigr)\bigr]\le\frac
{C(\log n)^{d-1}}{n^d}.
\]
\end{lem}
\begin{pf}
Recall $Y(\mathbf{z})$ are defined for all $\mathbf{z}\in\mathbb{Z}^{d}$.
Let $\mathcal{C}_{\infty}$ be the infinite component of $Y^{-1}(1)$.
We start by showing that w.h.p., $\mathcal{C}$, the largest cluster
in $Y^{-1}(1)\cap B$ is contained in $\mathcal{C}_{\infty}$. By
Lemma~\ref{lemlargestcomponent}, the diameter of $\mathcal{C}$
is of order $n$ w.h.p. If in this case $\mathcal{C}\nsubseteq\mathcal
{C}_{\infty}$,
then $\mathcal{C}$ is a finite cluster in $Y^{-1}(1)$ of diameter
$n$. In the supercritical phase ($p>p_{c}$), the probability for
such a cluster at a fixed vertex decays exponentially in $n$ (see,
e.g., 8.4 in \cite{grimmett1999percolation}). Thus we may union bound
over the vertices of $B$ to get that w.h.p.
%
\begin{equation}
\mathcal{C}\subset\mathcal{C}_{\infty}.\label{eqCcontainedininfcluster}
\end{equation}
We assume henceforth that this is the case.

Next, by Theorem~1.1 of \cite{antal1996chemical}, we have that for
some $0<k,K_{0},K<\infty$, dependent on dimension and $p_{1}$,
\[
\mathbf{P}_{p}\bigl[d_{\mathcal{C}_{\infty}}(\mathbf{x},\mathbf
{y})>K_{0}m|\mathbf{x},\mathbf{y}\in\mathcal{C}_{\infty},d(
\mathbf {x},\mathbf{y})=m\bigr]<K\exp(-km).
\]
We use this to show that for appropriate $K_{1}<\infty$, the probability
of the following event decays to $0$. Let
\[
\mathcal{A}=\bigl\{ \exists\mathbf{x},\mathbf{y}\in B\cap\mathcal
{C}_{\infty}, K_{1}\log n<d(\mathbf{x},\mathbf
{y})<K_{0}^{-1}d_{\mathcal{C}_{\infty}}(\mathbf{x},\mathbf{y})\bigr
\}.
\]
Using a union bound,
\[
\mathbf{P}_{p}[\mathcal{A}]<n^{d}\sum
_{m=K_{1}\log
n}^{\infty}Cm^{d-1}\exp(-km)<Cn^{d}(
\log n)^{d-1}n^{-2d}.
\]
%
%
Let $B^{-}=B(n-4dK_{0}K_{1}\log n)$. We now show that $\mathcal{A}$
not occurring implies the event $\mathcal{B}$.
\[
\mathcal{B}=\biggl\{ \forall\mathbf{x},\mathbf{y}\in\mathcal{C}\cap
B^{-}\mbox{ s.t. }1<\frac{d_{\mathcal{C}}(\mathbf{x},\mathbf
{y})}{K_{1}\log n}<4d, d_{\mathcal{C}}(
\mathbf{x},\mathbf {y})<4dK_{0}K_{1}\log n\biggr\}.
\]
From $\mathcal{A}$ not occurring and \eqref{eqCcontainedininfcluster},
we get that for any $\mathbf{x},\mathbf{y}$ satisfying the condition
in $\mathcal{B}$, $d_{\mathcal{C}_{\infty}}(\mathbf{x},\mathbf
{y})<4dK_{0}K_{1}\log n$.
Since $\mathbf{x},\mathbf{y}\in B^{-}$, a path connecting $\mathbf{x}$
to $\mathbf{y}$ in $\mathcal{C}_{\infty}$ realizing this distance
is too short to reach $\partial^{\mathrm{in}}B$, and thus by \eqref
{eqCcontainedininfcluster}
is contained in~$\mathcal{C}$.

Next, for any $\mathbf{x},\mathbf{y}\in B^{-}$, there is a sequence
of boxes $b_{1},\ldots,b_{m}$ where $\mathbf{x}\in b_{1},\mathbf
{y}\in b_{m}$
and the following conditions hold. For all $i$ for which it is defined,
$\|b_{i}\|=\lceil K_{1}\log n\rceil$, the diameter of $b_{i}\cup b_{i+1}$
is less than $4dK_{1}\log n$, $d(b_{i},b_{i+1})>K_{1}\log n$ and
for some $K_{2}<\infty$, $m<K_{2}d(x,y)/\log n+2$. The left term
in the bound for $m$ can be achieved for example by placing boxes
with order $\log n$ spacing in lines parallel to the coordinate axes.
The constant $2$ appears for the case where $d(\mathbf{x},\mathbf
{y})<K_{1}\log n$
and we use an intermediary box.

Lemma~\ref{lemupbndsizecmpincomplement} tells us that for all
large $n$, w.h.p. every box $b$ with $\|b\|\ge\log n$ intersects
$\mathcal{C}$.
Assuming that this and the high probability $\mathcal{B}$ event occur,
we have that for $\mathbf{x},\mathbf{y}$ as in $\mathcal{B}^{-}$,
$d_{\mathcal{C}}(\mathbf{x},\mathbf{y})<4dK_{0}K_{1}
(K_{2}d(x,y)+2\log n)$,
and we are done.
\end{pf}


\section{Goodness of random walk range}\label{secDense-Random-Walk}

\subsection{Random walk definitions
and notation}\label{subRandom-walk-definitions}

Given a box $B$, consider the two faces of $\partial B^7$ for
which the first coordinate is constant. We call the one for which
this coordinate is larger the \emph{top face} and call the other one
the \emph{bottom face.} Let Top$^{+}(B),\operatorname{Bot}(B)$ be the
projection
of $B^{3}$ on the top and bottom faces, respectively. Let Top$(B)$ be
the neighbors of Top$^{+}(B)$\label{pg18} inside $B^7$. Thus, Top$(B)\subset
\partial
^{\mathrm{in}}B^7$
is a translation along the first coordinate of $\operatorname{Bot}(B)\subset\partial B^7$.

Let $\mathbf{P}_{\mathbf{x}}[\cdot]$ be the law that makes
$S(\cdot)$ an independent SRW starting at $\mathbf{x}\in\mathbb{Z}^{d}$.
For a set $A\subset\mathbb{Z}^{d}$, let $\tau_{A}=\inf\{ t\ge
0\dvtx S(t)\in A\} $
be the first hitting time of $A$, and for a single vertex $\mathbf{v}$,
we write $\tau_{\mathbf{v}}=\tau_{\{ \mathbf{v}\} }$.
For $\mathbf{a}\in\operatorname{Top}(B),\mathbf{z}\in\operatorname{Bot}(B)$, we call
the ordered
pair $\eta=(\mathbf{a},\mathbf{z})$ a $B$\emph{-traversal}.\label{pg19} We write
$\mathbf{P}^{\eta}[\cdot]=\mathbf{P}_{\mathbf{a}}[\cdot
|\tau_{\partial B^7}=\tau_{\mathbf{z}}]$.

Let $H=(\eta_{1},\eta_{2},\ldots,\eta_{k})$ be an ordered
sequence of $B$-traversals. We call $H$ a $B$-\emph{itinerary}\label{pg20}
and write $\mathbb{P}_{H}=\mathbf{P}^{\eta_{1}}\times\cdots\times
\mathbf
{P}^{\eta_{k}}$
for the product probability space. For each $\eta\in H$, we denote
the associated independent conditioned random walk by $S_{\eta}(\cdot)$,
write $\mathcal{R}_{\eta}(t_{1},t_{2})=\{ S_{\eta}(s)\dvtx t_{1}\le
s\le t_{2}\} $
and simply $\mathcal{R}_{\eta}$ for $\mathcal{R}_{\eta}(0,\tau
_{\partial B^7})$.
We say $H$ is $\rho$\emph{-dense} if $|H|\ge\rho\|B\|^{d-2}$.

For a $B(\mathbf{x},n)$-itinerary $H$, we abbreviate notation inside
$\mathbb{P}_{H}[\cdot]$ by writing $\Good_{k}^{\rho}$
instead of $\Good_{k}^{\rho}(n)+\mathbf{x}$.

For a SRW $S(\cdot)$, we write $S(t_{1},t_{2})$ for the sequence
$(S(t_{1}),\ldots,S(t_{2}))$.

For $Q\subset H$ a set (subsequence) of $B$-traversals, let $\mathcal
{R}_{Q}=\bigcup_{\eta\in Q}\mathcal{R}_{\eta}$.
When in use under the law $\mathbb{P}_{H}$, we write $\mathcal{R}$
for $\mathcal{R}_{H}$.
\subsection{Independence of a random
walk traversing a box}\label{subIndependence-of-a-traversal}

Let\vspace*{1pt} $\mathcal{I}_{N}(\cdot)=\proj^{-1}\circ\proj(\cdot)$
and for $b=b(\lceil N/10\rceil)$ let $\mathcal{I}_{N}^{*}(\cdot
)=b^7\cap
\mathcal{I}_{N}(\cdot)$.
Since $\|b^7\|<N$, $\mathcal{I}_{N}(b^7)$ is an infinite disconnected
union of translated copies of $b^7$. Thus, we have that for any
$\mathbf{x}\in\mathcal{I}_{N}(b^7)$, $\mathcal{I}_{N}^{*}$ is
a graph isomorphism between $b^7$ and $\beta_{\mathbf{x}}$, the
component of $\mathbf{x}$ in $\mathcal{I}_{N}(b^7)$.

Given $S(\cdot)$, a simple random walk in $\mathbb{Z}^{d}$, we define
the following random set of triplets.
\begin{eqnarray*}
\mathfrak{T}_{N} & =&\bigl\{  \bigl(\gamma,\gamma^{+},
\beta\bigr)\dvtx0<\gamma <\gamma^{+},\beta\mbox{ a box},
\beta^{7}\mbox{ is a component of }\mathcal{I}_{N}
\bigl(b^7\bigr),\\
&&\hspace*{7pt}{}S(\gamma-1)\in\partial\beta^{7},
\\
& &\hspace*{50pt} {} S(\gamma)\in\operatorname{Top}(\beta),\mathcal{R}\bigl(\gamma,\gamma
^{+}-1\bigr)\subset\beta^{7},S\bigl(\gamma^{+}
\bigr)\in\operatorname{Bot}(\beta)\bigr\}.
\end{eqnarray*}
For any two distinct copies of $b^7$ in $\mathcal{I}_{N}(b^7)-\beta,\hat{\beta}$ we have $\partial\beta^{7}\cap\partial\hat
{\beta
}^{7}=\varnothing$.
Thus, for any two distinct triplets $(\gamma,\gamma^{+},\beta)$,
$(\hat
{\gamma},\hat{\gamma}^{+},\hat{\beta})$
either $\gamma>\hat{\gamma}^{+}$ or $\hat{\gamma}>\gamma^{+}$. Ordering
the triplets by increasing first coordinate, we write $(\gamma
_{i},\gamma_{i}^{+},\beta_{i})$
for the $i$th triplet by this order.

Since $\mathfrak{T}_{N}$ may be defined in terms of the finite state
Markov process $S_{N}(\cdot)$, $\mathbf{P}_{\mathbf{x}}[
|\mathfrak{T}_{N}|=\infty]=1$.
Thus, for $\rho>0$, $\gamma_{\lceil\rho n^{d-2}\rceil}^{+}$ is well defined.
%
\begin{defn}
Let $\tau_\rho(b)=\gamma^+_{\lceil\rho n^{d-2}\rceil}$.\label{pg21}
\end{defn}
The next lemma claims the following: Run a SRW up to time $uN^d$ from a
point $x\in B^{10}$. There exists a constant $\rho(u)$ such that with
high probability there are at least $\rho N^{d-2}$ traversals from Top
to Bot. See Figure~\ref{figtopbot} for graphical representation.

%
\begin{figure}

\includegraphics{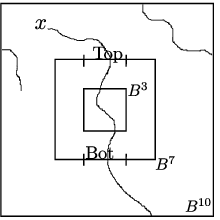}

\caption{Traversal and Top, Bot definition.}\label{figtopbot}
\end{figure}

\begin{lem}\label{lemforeveryurthereisrho}
\label{lemtoplevelhitnd-2times}For any $u>0$, there is a $\rho(u)>0$
such that
\[
\mathbf{P}_{\mathbf{x}}\bigl[\tau_\rho(b)<uN^{d}\bigr]
\stackrel{N} {\to}1,
\]
uniformly for any $\mathbf{x}\in\mathbb{Z}^{d}$.
\end{lem}
\begin{pf}
Let $n=\lceil N/10\rceil$ and let $b=b(n)$. By the central limit
theorem, there is a $c_{1}>0$ such that $\mathbf{P}_{\mathbf{x}}
[\tau_{\mathcal{I}_{N}(b)}<N^{2}/2]>c_{1}$
uniformly in $\mathbf{x}$. For $\mathbf{y}=(y_{1},\ldots,y_{d})\in
\mathbb{Z}^{d}$,
define $\mathcal{B}(\mathbf{y})$ to be the event that in $N^{2}/2$
steps the first coordinate of a $d$-dimensional random walk hits
$y_{1}+4n$ and then hits $y_{1}-8n$, while the maximal change in
the other coordinates is less than $n$. By the invariance principle,
there is a $c_{2}>0$ such that for all large $N$, $\mathbf{P}_{\mathbf
{y}}[\mathcal{B}(\mathbf{y})]>c_{2}$.

Let $\tau_{i}^{b}=\inf\{ t\ge iN^{2}\dvtx S(t)\in\mathcal
{I}_{N}(b)\} $,
let $\mathcal{A}_{i}=\{ \tau_{i}^{b}<(i+\frac{1}{2})N^{2}\} $
and let $\chi_{i}$ be the indicators of $\mathcal{A}_{i}$ occurring
for $S(t)$ and $\mathcal{B}(S(\tau_{i}^{b}))$ occurring for $S(\tau
_{i}^{b}+t)$.
Note that $\chi_{i}$ implies there is a $(\gamma,\gamma^{+},\beta
)\in
\mathfrak{T}_{N}$,
$iN^{2}\le\gamma<\gamma^{+}<(i+1)N^{2}$.

By the Markov property, $\chi_{i}$ dominates i.i.d. Bernoulli r.v.'s
that are $1$ w.p. $c_{3}>0$ for all large $N$. Thus, by the law
of large numbers, $\sum_{i=1}^{\lfloor uN^{d-2}\rfloor}\chi
_{i}>c_{3}uN^{d-2}/2$,
w.h.p. This event implies that $\tau_\rho(b)\le uN^{d}$
for $\rho<c_{3}u/2$, which completes the proof.
\end{pf}
Given a box $b$, and a set $\omega\subset b$, we call $\omega$
\emph{$b$-boundary-connected} if any $\mathbf{x}\in\omega\cap b^7$
is connected in $\omega$ to $\partial^{\mathrm{in}}b^7$. We call $F\dvtx
\mathbb
{Z}^{\ge0}\to2^{b^7}$
a \emph{$b$-boundary-connected-path} if $F(t)\subset F(t+1)$ and
$F(t)$ is $b$-boundary-connected for all $t\ge0$.

We write $\mathbf{S}$ for the set of all finite paths in $\mathbb{Z}^{d}$.
That is,
\[
\mathbf{S}=\bigl\{ s=(\mathbf{v}_{0},\ldots,\mathbf{v}_{n})
\dvtx\mathbf {P}_{\mathbf{v}_{0}}\bigl[S(0,n)=s\bigr]>0\bigr\}.
\]
For $s=(\mathbf{v}_{0},\ldots,\mathbf{v}_{n})\in\mathbf{S,}$ we let
$s_i=\mathbf{v}_{i}$ and write $\|s\|$ for $n$, the number of
edges traversed by the path $s$.
The next lemma attains stochastic domination between the range of the
random walk and a $\rho$-dense $b$-itinerary.
%
\begin{lem}
\label{lemPNgePrhoandFt}For $N>0$, fix a box $b=b(\lceil
N/10\rceil)$,
$\rho>0$ and $\mathbf{x}\in\mathbb{Z}^{d}$. Then for any $\mathcal
{A}\subset2^{b^7}$
there is a $\rho$-dense $b$-itinerary $H=H(\mathcal{A})$ and a
$b$-boundary-connected-path $F(t)=F(\mathbf{x},t)$ such that
\[
\mathbf{P}_{\mathbf{x}}\bigl[\bigl\{ \mathcal{I}_{N}^{*}
\circ\mathcal {R}(t)\dvtx t\ge\tau_{\rho}(b)\bigr\} \subset\mathcal{A}
\bigr]\ge\mathbb {P}_{H}\bigl[\bigl\{ \mathcal{R}\cup F(t)\dvtx t\ge0
\bigr\} \subset\mathcal {A}\bigr].
\]
\end{lem}
\begin{pf}
%
Let $n=\lceil N/10\rceil$ and let $M=\lceil\rho n^{d-2}\rceil$.
For $1\le i\le M+1$ fix $s_{i}\in\mathbf{S}$. Let
\[
\tau_{\square}=\inf\bigl\{ t\ge\tau_{\rho}\dvtx\mathcal {T}(N)=
\mathcal {R}_{N}(\tau_{\rho},t)\bigr\}
\]
that is, the first time after $\tau_\rho$, the random walk (starting at
time $\tau_\rho$) covers the torus.
Since $\mathcal{R}_{N}$ takes values in the finite state space
$\mathcal{T}(N)$,
$\mathbf{P}_{\mathbf{x}}[\tau_{\square}<\infty]=1$. With
the convention that $\gamma_{0}^{+}=0$, we partition the probability
space of $S(\cdot)$ to events
\[
\mathcal{B}=\mathcal{B}(s_{1},\ldots,s_{M+1})=\Biggl\{
\bigcap_{i=1}^{M}S\bigl(
\gamma_{i-1}^{+},\gamma_{i}\bigr)=s_{i}
\Biggr\} \cap S\bigl(\gamma _{M}^{+},\tau_{\square}
\bigr)=s_{M+1}
\]
satisfying $\mathbf{P}_{\mathbf{x}}[\mathcal{B}(s_{1},\ldots,s_{M+1})]>0$.
For $i=1,\ldots,M$ let $\alpha(i)=s_{i}(\|s_{i}\|),\zeta
(i)=s_{i+1}(0)$, that is, the end point of the path $s_i$ and the
starting point of the path $s_{i+1}$.
By the Markov property (see Proposition~\ref{proproductformindependent}),
$\{ S(\gamma_{i},\gamma_{i}^{+})\} _{i=1}^{M}$ under \textbf
{$\mathbf{P}_{\mathbf{x}}[\cdot|\mathcal{B}]$}
are independent random vectors with the distribution of $S(0,\tau
_{\partial\beta_{i}^{7}})$
under $\mathbf{P}_{\alpha(i)}[\cdot|\tau_{\partial\beta
_{i}^{7}}=\tau_{\zeta(i)}]$.
Let $\mathbf{a}(i)=\mathcal{I}_{N}^{*}(\alpha(i))$, $\mathbf
{z}(i)=\mathcal{I}_{N}^{*}(\zeta(i))$
and let $H$ be a $b$-itinerary, $H=(\eta_{1},\ldots,\eta_{M})$
where $\eta_{i}=(\mathbf{a}(i),\mathbf{z}(i))$. Since
$\mathcal{I}_{N}^{*}$ is an isomorphism between $\beta_{i}^{+}$
and $b^7$, $\mathcal{I}_{N}^{*}\circ S(0,\tau_{\partial\beta_{i}^{7}})$
under $\mathbf{P}_{\alpha(i)}[\cdot|\tau_{\partial\beta
_{i}^{7}}=\tau_{\zeta(i)}]$
is distributed the same as $S(0,\tau_{\partial b^7})$ under $\mathbf
{P}^{\eta_{i}}[\cdot]$.
Thus, $\bigcup_{i=1}^{M}\{ \mathcal{I}_{N}^{*}\circ\mathcal
{R}(\gamma_{i},\gamma_{i}^{+})\} $
under $\mathbf{P}_{\mathbf{x}}[\cdot|\mathcal{B}]$
is distributed like $\mathcal{R}_{H}$ under $\mathbb{P}_{H}[\cdot
]$.
Let
\[
\hat{F}(t)=\bigcup_{i=1}^{M}\mathcal{R}
\bigl(\gamma_{i-1}^{+},\gamma _{i}\bigr)\cup
\mathcal{R}\bigl(\gamma_{M}^{+},\bigl(\gamma_{M}^{+}+t
\bigr)\wedge\tau _{\square}\bigr).
\]
Since $\tau_{\rho}=\gamma_{M}^{+}$, we have $\mathcal{R}(\tau
_{\rho
}+t)=\hat{F}(t)\cup\bigcup_{i=1}^{M}\{ \mathcal{R}(\gamma
_{i},\gamma_{i}^{+})\} $
for all $t\ge0$. Given~$\mathcal{B}$, $\hat{F}(t)$ is uniquely
determined. Let $F(t)=\mathcal{I}_{N}^{*}\circ\hat{F}(t)$. Since
$\mathcal{I}_{N}^{*}$ is either a local isomorphism to $b^7$ or
else gives the empty set, $F(t)$ is a $b$-boundary-connected-path.
Thus, for any $\mathcal{A}\subset2^{b^7}$
\[
\mathbf{P}_{\mathbf{x}}\bigl[\bigl\{ \mathcal{I}_{N}^{*}
\circ\mathcal {R}(t)\dvtx t\ge\tau_{\rho}(b)\bigr\} \subset\mathcal{A}|
\mathcal{B} \bigr]=\mathbb{P}_{H(\mathcal{B})}\bigl[\bigl\{ \mathcal{R}\cup
F_{(\mathcal
{B})}(t)\dvtx t\ge0\bigr\} \subset\mathcal{A}\bigr],
\]
which proves the lemma (see Proposition~\ref{propartition}).
\end{pf}
For a box $B$ and a $B$-itinerary $H$, we proceed to define the
event $\mathcal{D}_{\rho}^{\sigma}=\mathcal{D}_{\rho}^{\sigma}(H,B)$.
Roughly, $\mathcal{D}_{\rho}^{\sigma}$ is the event that all subboxes
are crossed a correct order of times by $B$-traversals. First, given
a box $B$ and $b\in\sigma(B)$, let us define for a random walk
$S(\cdot)$
the event $\mathcal{J}_{B}[b]$
\begin{eqnarray*}
\mathcal{J}_{B}[b] & =&\bigl\{  \exists t,t^{+}
\dvtx0<t<t^{+}<\tau _{\partial B^7}\dvtx S(t-1)\in\partial
b^7,S(t)\in\operatorname{Top}(b),
\\
& &\hspace*{105pt} \mathcal{R}\bigl(t,t^{+}-1\bigr)\subset b^7,S
\bigl(t^{+}\bigr)\in\operatorname{Bot}(b)\bigr\}.
\end{eqnarray*}
Given $H$ a $B$-itinerary, $\eta\in H$, and a subbox $b\in\sigma(B)$,
we write $\mathcal{J}_{\eta}[b]$ for the event $\mathcal
{J}_{B}[b]$
occurring on the random walk $S_{\eta}(\cdot)$.

Next, we would like to assign each box $b\in\sigma(B)$ a subset $H
[b]\subset H$
with the property that if two distinct subboxes intersect, they have
disjoint $H[\cdot]$ sets. Let us do this by first fixing
a function $(\cdot)_{50}\dvtx\mathbb{Z}^{d}\to\{ 0,1,\ldots,
50^{d}-1\} $
with the property that any distinct $\mathbf{x},\mathbf{y}\in\mathbb{Z}^{d}$
with $(\mathbf{x})_{50}=(\mathbf{y})_{50}$ are a distance of at least
$50$ in the $l_{\infty}$ norm. This can be induced by any bijection
from $(\mathbb{Z}/50\mathbb{Z})^{d}$ to $\{ 0,1,\ldots,
50^{d}-1\} $.

Recall that $\Delta$ is the isomorphism mapping $\sigma(B)$ into
$\mathbb{Z}^{d}$, and that $H=(\eta_{1},\ldots,\eta_{k})$
is an ordered sequence. We write
\[
H[b]=\bigl\{ \eta_{i}\in H\dvtx i\equiv(\Delta b )_{50}
\bigl(\operatorname{mod}50^{d}\bigr)\bigr\}.
\]
Next, for each $b\in\sigma(B)$ define the random set of $B$-traversals
$\psi_{H}[b]=\{ \eta\in H[b]\dvtx\mathcal{J}_{\eta}
[b]\} $.
Since $2\|b^7\|<50^d\|b\|$, we get the following desired property.
%
\begin{clm}
\label{cladisjointtraversals}For any distinct $b_{0},b_{1}\in\sigma(B)$
satisfying $b_{0}^{7}\cap b_{1}^{7}\neq\varnothing$ we have $\psi_{H}
[b_{0}]\cap\psi_{H}[b_{1}]=\varnothing$.
\end{clm}
%
\begin{defn}
\label{defDrhosigma}Let $\mathcal{D}_{\rho}^{\sigma}$\label{pg221} be the
event that for each $b\in\sigma(B)$, $|\psi_{H}[b]
|\ge\rho\|b\|^{d-2}$.
\end{defn}
The next lemma identifies, given some set, an itinerary which minimizes
the probability to be contained in the set. The sets in mind are
non good sets.
%
\begin{lem}
\label{lemrhotraversedsubboxgerhodenseitinerary}Fix a box
$B$, a $B$-itinerary $H$, a $B$-boundary-connected-path $F(t)$,
and a subbox $b\in\sigma(B)$.\vspace*{-1pt} Let $\mathcal{H}=\mathcal{H}(\mathcal
{E})=\mathcal{D}_{\rho}^{\sigma},\mathcal{R}_{H}\setminus b^7\in
\mathcal{E}$
where $\mathcal{E}$ is a fixed subset of $2^{B^7}$. Assume $\mathbb
{P}_{H}[\mathcal{H}]>0$.
Then for any $\mathcal{A}\subset2^{b^7}$, there is
a $\rho$-dense $b$-itinerary $h=h(\mathcal{A})$ and a
$b$-boundary-connected-path
$f(t)=f(\mathcal{A},F)(t)\subset b^7$ satisfying
\[
\mathbb{P}_{H}\bigl[\bigl\{ \bigl(\mathcal{R}\cup F(t)\bigr)\cap
b^7\dvtx t\ge0 \bigr\} \subset\mathcal{A} | \mathcal{H}\bigr]\ge
\mathbb{P}_{h}\bigl[ \bigl\{ \mathcal{R}\cup f(t)\dvtx t\ge0\bigr\}
\subset\mathcal{A}\bigr].
\]
\end{lem}
\begin{pf}
If for $\eta\in H$ the event $\mathcal{J}_{\eta}[b]$
occurs, then we know there exists at least one time pair $(t,t^{+})$,
$0<t<t^{+}<\tau_{\partial B^7}$ satisfying the requirements of
$\mathcal
{J}_{\eta}[b]$---roughly that $b^7$ is crossed top to bottom by $S_{\eta}$. Since
these time pairs must be disjoint, we can consider the first, which
we shall denote by $(t_{\eta},t_{\eta}^{+})$.

Fix $Q\subset H[b]$, $s_{\eta}^{1},s_{\eta}^{2}\in\mathbf{S}$
for each $\eta\in Q$ and $s_{\eta}^{0}\in\mathbf{S}$ for each $\eta
\in
H\setminus Q$
and define the event
%
\begin{eqnarray*}
\mathcal{B}=\mathcal{B}\bigl(Q,s_{\eta}^{i}\bigr)&=&
\bigl\{ \psi_{H}[b ]=Q\bigr\} \cap\bigcap
_{\eta\in Q}\bigl\{ S_{\eta}(0,t_{\eta})=s_{\eta
}^{1},S_{\eta}
\bigl(t_{\eta}^{+},\tau_{\partial B^7}\bigr)=s_{\eta}^{2}
\bigr\}
\\
&&{} \cap\bigcap_{\eta\in H\setminus Q}\bigl\{ S_{\eta}(0,
\tau_{\partial
B^7})=s_{\eta}^{0}\bigr\}.
\end{eqnarray*}
We partition $\{ S_{\eta}(\cdot)\dvtx\eta\in H\} $
to such $\mathcal{B}(Q,s_{\eta}^{i})$ events satisfying $\mathbb
{P}_{H}[\mathcal{B}(Q,s_{\eta}^{i})]>0$.
Any two distinct $\mathcal{B}(Q,s_{\eta}^{i}),\mathcal{B}(\hat
{Q},\hat
{s}_{\eta}^{i})$
have an empty intersection because either $Q\neq\hat{Q}$ or if
$Q=\hat{Q}$
then $s_{\eta}^{i}\neq\hat{s}_{\eta}^{i}$ for some $\eta\in Q$.
Observe that $\mathcal{R}_{H}\setminus b^7$ is determined by $\mathcal{B}$,
and that by our construction (Claim~\ref{cladisjointtraversals}),
so is $\psi_{H}[\cdot]$. Since $\mathcal{H}$ is $(\mathcal
{R}_{H}\setminus b^7,\psi_{H}[\cdot])$
measurable, and the $\mathcal{B}$ events are a partition of the entire
probability space, those for which $\mathbb{P}_{H}[\mathcal
{B},\mathcal{H}]>0$
form a partition of $\mathcal{H}$. $\mathcal{H}\subset\mathcal
{D}_{\rho
}^{\sigma}$
so any positive probability $\mathcal{B}(Q,s_{\eta}^{i})\subset
\mathcal{H}$
has $|Q|\ge\rho\|b\|^{d-2}$. For each $\eta\in Q$ let
$\mathbf{a}(\eta)=s_{\eta}^{1}(\|s_{\eta}^{1}\|),\mathbf{z}(\eta
)=s_{\eta}^{2}(0)$
and let $h$ be a $b$-itinerary, $h=(\mathbf{a}(\eta),\mathbf
{z}(\eta))_{\eta\in Q}$
with order inherited from $H$. Since $S_{\eta}$ are independent
and $\mathcal{B}$ is a product of events on $\{S_{\eta}\}_{\eta\in H}$
($\{ \psi_{H}[b]=Q\} $ can be factored to each
$\{S_{\eta}\}_{\eta\in H}$), we have by the Markov property (see
Proposition~\ref{proproductformindependent}), that $\{ S_{\eta
}(t_{\eta},t_{\eta}^{+})\} _{\eta\in Q}$
under $\mathbb{P}_{H}[\cdot|\mathcal{B}]$ are
independent random vectors with the distribution of $S(0,\tau
_{\partial b^7})$
under $\mathbf{P}_{\mathbf{a}(\eta)}[\cdot|\tau_{\partial
b^7}=\tau
_{\mathbf{z}(\eta)}]$.
Thus $\bigcup_{\eta\in Q}\mathcal{R}_{\eta}(t_{\eta},t_{\eta}^{+})$
under {$\mathbb{P}_{H}[\cdot|\mathcal{B}]$} is
distributed like $\mathcal{R}_{h}$ under $\mathbb{P}_{h}[\cdot
]$.
Let $\hat{f}=\bigcup_{\eta\in H\setminus Q}s_{\eta}^{0}\cup
\bigcup_{\eta\in Q}s_{\eta}^{1}\cup\bigcup_{\eta\in
Q}s_{\eta}^{2}$
and let $f(t)=(\hat{f}\cup F(t))\cap b^7$. Since all elements in
the union are $B$-boundary-connected, $f(t)$ is a $b^7$-boundary-connected-path.
As $(\mathcal{R}_{H}\cup F(t))\cap b^7=f(t)\cup\bigcup_{\eta\in
Q}\{ \mathcal{R}_{\eta}(t_{\eta},t_{\eta}^{+})\} $,
we have for any $\mathcal{A}\subset\Pw(b^7)$
\[
\mathbb{P}_{H}\bigl[\bigl\{ \bigl(\mathcal{R}\cup F(t)\bigr)\cap
b^7\dvtx t\ge0 \bigr\} \subset\mathcal{A}|\mathcal{B}\bigr]=
\mathbb{P}_{h}\bigl[\bigl\{ \mathcal{R}\cup f(t)\dvtx t\ge0\bigr\}
\subset\mathcal{A}\bigr].
\]
Since $|h|=|Q|\ge\rho\|b\|^{d-2}$, $h$ is
$\rho$-dense, and as $\mathcal{B}$ is an arbitrary partition element
of $\mathcal{H}$, this proves the lemma by Proposition~\ref{propartition}.
\end{pf}

\subsection{Properties of the range of a random walk}

We will require the following large deviation estimate for sums of
independent indicators, a weak version of Lemma~4.3 from \cite
{bramson1991asymptotic}.
%
\begin{lem}
\label{lemconcentrationofindicators}Let $Q$ be a finite sum of
independent indicator ($\{ 0,1\} $-valued) random variables
with mean $\mu>0$. There is a $0<c_{f}<1$ such that
\[
\Pr[Q<\mu/2],\Pr[Q>2\mu]<\exp(-c_{f}\mu).
\]
\end{lem}
Recall $\mathcal{D}_{\rho}^{\sigma}$ from Definition~\ref{defDrhosigma}.
%
\begin{thmm}
\label{thmmdenseimpliessubboxdensewhp}There is a $\Dns(d)>0$
such that for any $q>0,\varrho>0$ there is a $C(q,\varrho)<\infty$
such that if $n>C$ and $H$ is a $\varrho$-dense $B(n)$-itinerary,
\[
\mathbb{P}_{H}\bigl[\mathcal{D}_{\Dns\varrho}^{\sigma}
\bigr]>1-q.
\]
\end{thmm}
\begin{pf}
Fix $b\in\sigma(B)$ and let $m=\|b\|=s(n)$. Lemma~\ref
{lemlowbndhitsmallbox}
tells us that for any $B$-traversal $\eta\in H$,
\[
\mathbf{P}^{\eta}\bigl[\mathcal{J}_{\eta}[b]
\bigr]>c_{\Dns
}\biggl(\frac{m}{n}\biggr)^{d-2}.
\]
Let $Q=\sum_{\eta\in H[b]}\ind_{\CJ_\eta[b]}$. For
all large enough $n$, $|H[b]|\ge60^{-d}\varrho n^{d-2}$,
so by linearity,
\[
\mathbb{E}_{H}\bigl[\bigl|\psi_{H}[b]\bigr|\bigr]=\mathbb
{E}_{H}[Q]>\varrho60^{-d}c_{\Dns}m^{d-2}.
\]
Since the random walks $S_{\eta}$ are mutually independent, by
Lemma~\ref{lemconcentrationofindicators} there is a $c_{f}$ such that
\[
\mathbb{P}_{H}\bigl[Q\le\mathbb{E}[Q]/2\bigr]\le\exp
\bigl(-c_{f}\mathbb{E}[Q]\bigr)\le e^{-cm^{d-2}}.
\]
Let $\Dns=60^{-d}c_{\Dns}/2$. For $m\ge[\frac{2d}{c_{\Dns
}c_{f}60^{-d}\varrho}\log n]^{{1}/{(d-2)}}$
we have
%
\begin{equation}
\label{eqqpoly} \mathbb{P}_{H}\bigl[Q\le\varrho\Dns m^{d-2}
\bigr]<n^{-4(d-2)}.
\end{equation}
If $b=b(\mathbf{x},m)\in\sigma(B)$ then $\mathbf{x}\in B^{6}$ and
$m=s(n)=\lceil\log^{4}n\rceil$ which is\break  $\omega(\log^{{1}/{(d-2)}}n)$
but $o(n)$. Thus, for all large $n$, a union bound on $\sigma(B)$
gives the result.
\end{pf}
%
\begin{rem}\label{remanypoly}
One can obtain any polynomial decay in \eqref{eqqpoly} by taking
$m=\log^kn$, for large enough $k$.
\end{rem}

The below lemma shows that w.h.p., the union of those ranges of a dense
itinerary which intersect an interior set of low density, has size
of greater order than the size of the set itself.
%
\begin{lem}
\label{lemsetofsizenalphahit}Let $b=b(n)$, and let $M\subset b^{6}$
where $|M|\ge\beta n^{\alpha}$ with $\alpha,\beta>0$.
Let $h$ be a $\varrho$-dense $b$-itinerary, $\varrho>0$. Then
for $\gamma=1+2(\alpha^{-1}-d^{-1})$, $c_{g}(d)>0$ and all large
$n$,
\[
\mathbb{P}_{h}\biggl[\bigl|\mathcal{R}\bigl(\bigl\{ \eta\in h\dvtx
\mathcal {R}(\eta)\cap M\neq\varnothing\bigr\} \bigr)\cap b\bigr|<c_{g}
\varrho \beta n^{\gamma\alpha}\wedge\frac{n^{d}}{2}\biggr]\le\exp
\bigl(-c_{g}\varrho \beta n^{\alpha(1-2/d)}\bigr).
\]
\end{lem}
\begin{pf}
Fix $\eta=(\mathbf{a},\mathbf{z})\in h$ and $Q\subset b$. Let
$\mathcal
{B}(\eta,Q)=\{ |Q|<n^{d}/2\} \cup\{
|\mathcal{R}(\eta)\cap Q|>c_{0}n^{2}\} $,
$c_{0}>0$ determined below. Let $\tau_{M}(\eta)$ be the first hitting
time of $M$ by $S_{\eta}$, and let $\tau_{\mathcal{B}}(\eta)=\inf
\{ t\ge0\dvtx S_{\eta}(0,t)\subset\mathcal{B}\} $
be the first time the occurrence of $\mathcal{B}$ is implied by
$S_{\eta}(t)$.
By Proposition~\ref{proHitthenstop} for some $\mathbf{x}\in M$
\begin{eqnarray*}\hspace*{-8pt}
\mathbf{P}_{\mathbf{a}}[\tau_{M},\tau_{\mathcal{B}}<\tau
_{\partial
B^7}|\tau_{\partial B^7}=\tau_{\mathbf{z}}] & \ge& \mathbf
{P}_{\mathbf{a}}[\tau_{M}<\tau_{\partial B^7}|
\tau_{\partial
B^7}=\tau_{\mathbf{z}}]\mathbf{P}_{\mathbf{x}}[
\tau_{\mathcal
{B}}<\tau_{\partial B^7}|\tau_{\partial B^7}=
\tau_{\mathbf{z}}].
\end{eqnarray*}
By Lemma~\ref{lemlowbndhitmdmassinternal} and Corollary~\ref
{corlowbndPaxzorxyz},
\[
\mathbf{P}_{\mathbf{a}}[\tau_{M}<\tau_{\partial B^7}|\tau
_{\partial B^7}=\tau_{\mathbf{z}}]>c|M |^{1-2/d}n^{2-d}\ge
c_{1}\beta n^{\alpha(1-2/d)+(2-d)}.
\]
Since $M\subset b^{6}$ and assuming $|Q|\ge n^{d}/2$
for the nontrivial case, again by Lemma~\ref{lemlowbndhitmdmassinternal}
and Corollary~\ref{corlowbndPaxzorxyz}, we get for some
$c_{0},c_{2}>0$,
\[
\mathbf{P}_{\mathbf{x}}\bigl[\mathcal{R}(0,\tau_{\partial B^7})\cap
Q>c_{0}n^{2}|\tau_{\partial B^7}=\tau_{\mathbf{z}}
\bigr]>c_{2}.
\]
Recall, $h=(\eta_{1},\eta_{2},\ldots)$. Let $\mathcal
{R}_{k}^{M}=\mathcal{R}(\{ \eta_{i}\in h\dvtx1\le i\le k,\mathcal
{R}(\eta_{i})\cap M\neq\varnothing\} )$,
let $\chi(\eta,Q)$ be the indicator variable for the event $\{ \tau
_{M}(\eta),\tau_{\mathcal{B}}(\eta)<\tau_{\partial B^7}(\eta)\} $
and let $\mathcal{S}_{k}=\sum_{i=1}^{k}\chi(\eta_{i},b\setminus
\mathcal{R}_{i-1}^{M})$.
Then $|\mathcal{R}_{k}^{M}|$ stochastically dominates
$c_{0}n^{2}\mathcal{S}_{k}\wedge\frac{n^{d}}{2}$. By above bounds,
and independence of traversals, the sequence $\chi(\eta
_{i},b\setminus
\mathcal{R}_{i-1}^{M})$
dominates i.i.d. Bernoulli r.v.'s that are $1$ w.p. $c_{1}c_{2}\beta
n^{\alpha(1-2/d)+(2-d)}$.
Thus, by concentration of i.i.d. Bernoulli r.v.'s, for example, as
stated in Lemma~\ref{lemconcentrationofindicators},
\[
\mathbb{P}_{h}\biggl[\bigl|\mathcal{R}_{|h|}^{M}\bigr|<
\frac
{c_{0}}{2}n^{2}\mathbb{E}_{h}[\mathcal{S}_{|h|}
]\wedge\frac{n^{d}}{2}\biggr]<\exp\bigl(-c_{f}
\mathbb{E}_{h}[\mathcal {S}_{|h|}]\bigr).
\]
%
Since
\[
\mathbb{E}_{h}[\mathcal{S}_{|h|}]\ge\varrho
n^{d-2}c_{3}\beta n^{\alpha(1-2/d)+(2-d)},
\]
we get
\[
\mathbb{P}_{h}\biggl[\bigl|\mathcal{R}_{|h|}^{M}
\bigr|<c_{4}\varrho\beta n^{\alpha(1+2(\alpha^{-1}-d^{-1}))}\wedge\frac
{n^{d}}{2}\biggr]<
\exp\bigl(-c_{f}c_{3}\varrho\beta n^{\alpha(1-2/d)}\bigr),
\]
which proves the lemma.
\end{pf}

%
\begin{lem}
\label{lemallconnected}Let $b(n)$ be a box, let $F(t)$ be a
$b$-boundary-connected-path,
and let $H$ be a $\rho$-dense $b$-itinerary, $\rho>0$. There is
a $c(\rho)>0$ such that for all large $n$
\[
\mathbb{P}_{H}\bigl[\forall t\ge0,\bigl(\mathcal{R}\cup F(t)\bigr)
\cap b^5\mbox{ is connected in }\mathcal{R}\cup F(t)\bigr]>1-\exp
\bigl(-cn^{1-2/d}\bigr).
\]
\end{lem}
\begin{pf}
For any $h\subset H$, $F(t)\cup\mathcal{R}_{h}$ is also a
$b$-boundary-connected-path
and is independent from the traversals in $H\setminus h$. Thus, we
may assume w.l.o.g. that $|H|=\lceil\rho n^{d-2}\rceil$. Let
$H^{5.5}=\{ \eta\in H\dvtx\mathcal{R}(\eta)\cap b^{5.5}\neq
\varnothing
\} $.
We show that $\mathcal{R}(H^{5.5})$ is connected in $\mathcal
{R}=\mathcal{R}_{H}$
w.h.p. Set $D=\lceil\log_{(1+d^{-1})}\frac{2d}{3}\rceil$. If $
|H^{5.5}|\le1$
we are done. Otherwise given distinct traversals $\zeta,\varphi\in H^{5.5}$,
partition $H\setminus\{ \zeta,\varphi\} $ into sets
$H_{1}^{\zeta},H_{1}^{\varphi},\ldots,H_{D}^{\zeta},H_{D}^{\varphi}$
and $H_{*}$, where each of the $2D+1$ sets has size at least $
|H|/3D>c_{D}(\rho,d)n^{d-2}$.
Set $M_{0}^{\zeta}=\mathcal{R}(\zeta)\cap b^{6}$ and for $i=1,\ldots,D$
recursively define
\[
M_{i}^{\zeta}=\bigcup_{\eta\in H_{i}^{\zeta}\dvtx M_{i-1}^{\zeta
}\cap\mathcal
{R}(\eta)\neq\varnothing}
\mathcal{R}(\eta)\cap b^{6}.
\]
Define $M_{i}^{\varphi}$ analogously. Thus, the event $\mathcal{M}=
\{ \exists\eta\in H_{*}\dvtx|\mathcal{R}(\eta)\cap M_{D}^{\zeta}
|,|\mathcal{R}(\eta)\cap M_{D}^{\varphi}|>0\} $
implies that $\mathcal{R}(\zeta)$ is connected to $\mathcal
{R}(\varphi)$
in $\mathcal{R}$, an event we denote by $\zeta\leftrightarrow\varphi$.
For $i=0,\ldots,D$, let $\mathcal{M}_{i}^{\zeta}=\{
|M_{i}^{\zeta}|\ge c_{\rho}^{i}n^{(1+d^{-1})^{i}}\} $
where $c_{\rho}=c_{g}\rho\wedge\frac{1}{2}$, with $c_{g}$ from
Lemma~\ref{lemsetofsizenalphahit}. Define $\mathcal{M}_{i}^{\varphi}$
analogously. By independence of $\mathcal{M}_{D}^{\zeta}$ and~$\mathcal
{M}_{D}^{\varphi}$,
%
\begin{equation}
\mathbb{P}_{H}[\zeta\leftrightarrow\varphi]\ge\mathbb
{P}_{H}[\mathcal{M}]\ge\mathbb{P}_{H}\bigl[\mathcal{M} |
\mathcal {M}_{D}^{\zeta},\mathcal{M}_{D}^{\varphi}
\bigr]\mathbb {P}_{H}\bigl[\mathcal{M}_{D}^{\zeta}
\bigr]\mathbb{P}_{H}\bigl[\mathcal {M}_{D}^{\varphi}
\bigr].\label{eqlowbnd2trvsintersect}
\end{equation}
By Proposition~\ref{propartition}, for some $F_{D}^{\zeta
}(H_{*}),F_{D}^{\varphi}(H_{*})\subset b^{6}$
with $|F_{D}^{\zeta}|,|F_{D}^{\varphi}|\ge c_{\rho
}^{D}n^{2d/3}$,
%
\begin{equation}
\mathbb{P}_{H}\bigl[\mathcal{M} | \mathcal{M}_{D}^{\zeta},
\mathcal {M}_{D}^{\varphi}\bigr]\ge\mathbb{P}_{H}
\bigl[\mathcal{M} | M_{D}^{\zeta
}=F_{D}^{\zeta},M_{D}^{\varphi}=F_{D}^{\varphi}
\bigr]=1-q_{D}.\label{eq0goodintersectMqD}
\end{equation}
Given $\eta=(\mathbf{a},\mathbf{z})\in H_{*}$, let $\tau
_{\zeta}(\eta),\tau_{\varphi}(\eta)$
be the first hitting times of $F_{D}^{\zeta},F_{D}^{\varphi}$ by
$\eta$, respectively. By Proposition~\ref{proHitthenstop}, for
some $\mathbf{x}\in F_{D}^{\zeta}$,
\begin{eqnarray*}
\mathbf{P}_{\mathbf{a}}[\tau_{\zeta},\tau_{\varphi}<\tau
_{\partial
B^7}|\tau_{\partial B^7}=\tau_{\mathbf{z}}] & \ge& \mathbf
{P}_{\mathbf{a}}[\tau_{\zeta}<\tau_{\partial B^7}|
\tau_{\partial
B^7}=\tau_{\mathbf{z}}]\mathbf{P}_{\mathbf{x}}[
\tau_{\varphi
}<\tau_{\partial B^7}|\tau_{\partial B^7}=
\tau_{\mathbf{z}}].
\end{eqnarray*}
Since $F_{D}^{\zeta},F_{D}^{\varphi}\subset b^{6}$ and $
|F_{D}^{\zeta}|,|F_{D}^{\varphi}|\ge c_{\rho}^{D}n^{2d/3}$,
by Lemma~\ref{lemlowbndhitmdmassinternal} and Corollary~\ref
{corlowbndPaxzorxyz}
each term in the product above is at least
\[
c_{1}\bigl(c_{\rho}^{D}n^{2d/3}
\bigr)^{1-2/d}cn^{2-d}=c_{2}n^{(2-d)/3}.
\]
Let $\chi(\eta)$ be the indicator for the event $\{ \tau_{\zeta
}(\eta),\tau_{\varphi}(\eta)<\tau_{\partial B^7}(\eta)\} $,
and write $\mathcal{S}=\sum_{\eta\in H_{*}}\chi(\eta)$. Then
$q_{D}\le\mathbb{P}_{H}[\mathcal{S}=0]$. By concentration
of independent indicators in Lemma~\ref{lemconcentrationofindicators},
we get
%
\begin{equation}
q_{D}\le\exp\bigl(-c_{f}c_{D}n^{d-2}c_{2}^{2}n^{2(2-d)/3}
\bigr)\le\exp \bigl(-c_{3}(\rho )n^{(d-2)/3}\bigr).\label{eq0goodintersectqDbnd}
\end{equation}
We now lower bound $\mathbb{P}_{H}[\mathcal{M}_{D}^{\zeta}]$
and $\mathbb{P}_{H}[\mathcal{M}_{D}^{\varphi}]$ from \eqref
{eqlowbnd2trvsintersect}.
Since the bound is the same for both terms, we drop $\zeta,\varphi$
from the notation. Note that by connectedness of each traversal in
$H^{5.5}$, $\mathcal{M}_{0}$ is of probability one, thus by chaining
conditions
%
\begin{equation}
\mathbb{P}_{H}[\mathcal{M}_{D}]\ge\prod
_{i=0}^{D-1}\mathbb {P}_{H}[
\mathcal{M}_{i+1} | \mathcal{M}_{i},\ldots,\mathcal
{M}_{0}].\label{eq0goodintersectMproduct}
\end{equation}
By Proposition~\ref{propartition}, for some $F_{i}(H_{i+1})\subset b^{6}$
with $|F_{i}|\ge c_{\rho}^{i}n^{(1+d^{-1})^{i}}$ we have
\[
\mathbb{P}_{H}[\mathcal{M}_{i+1} | \mathcal{M}_{i},
\ldots,\mathcal{M}_{0}]\ge\mathbb{P}_{H}[
\mathcal{M}_{i+1} | M_{i}=F_{i}].
\]
By Lemma~\ref{lemsetofsizenalphahit}, for $0\le i\le D-1$,
if $F_{i}\subset b^{6}$ and $|F_{i}|\ge c_{\rho
}^{i}n^{(1+d^{-1})^{i}}$,
then for all large $n$ and some $c_{i}(\rho)>0$,
%
\begin{equation}
q_{i}(F_{i})=1-\mathbb{P}_{H}[
\mathcal{M}_{i+1} | M_{i}=F_{i}]\le \exp
\bigl(-c_{i}n^{1-2/d}\bigr).\label{eq0goodintersectqi}
\end{equation}
Let $q_{i}=1-p_{i}$ for $i=0,\ldots,D$. Writing $\zeta
\nleftrightarrow
\varphi$
for the event that $\mathcal{R}(\zeta)$ is not connected to $\mathcal
{R}(\varphi)$
in $\mathcal{R}$, and plugging \eqref{eq0goodintersectMqD},
\eqref{eq0goodintersectMproduct}, \eqref{eq0goodintersectqi}
into \eqref{eqlowbnd2trvsintersect} and using the bounds from
\eqref{eq0goodintersectqDbnd}, \eqref{eq0goodintersectqi}
we have for large $n$ and $c_{4}(\rho)$
\[
\mathbb{P}_{H}[\zeta\nleftrightarrow\varphi]\le1-\prod
_{i=0}^{D}(1-q_{i})^{2}\le2
\sum_{i=0}^{D}q_{i}\le2D\exp
\bigl(-c_{4}n^{1-2/d}\bigr).
\]
Since we assumed $|H|<2\rho n^{d-2}$, we union bound the
probability for $\{ \zeta\nleftrightarrow\varphi\} $ over
any two traversals in $H^{5.5}$, to get
%
\begin{equation}
\mathbb{P}_{H}\bigl[\mathcal{R}\bigl(H^{5.5}\bigr)\mbox{
is connected in }\mathcal {R}\bigr]\ge1-\exp\bigl(-c_{5}n^{1-2/d}
\bigr).\label{eqRH55conninRwhp}
\end{equation}

Let $\mathcal{F}$ be the event that for any $t\ge0$, any $\mathbf
{x}\in
F(t)\cap b^5$
is connected to $\mathcal{R}(H^{5.5})$ in $\mathcal{R}\cup F(t)$.
By \eqref{eqRH55conninRwhp}, to prove the lemma it remains
to show that $\mathcal{F}$ occurs w.h.p. Let $t_{\mathbf{x}}=\inf\{
t\ge0\dvtx\mathbf{x}\in F(t)\} $
and denote by $M_{\mathbf{x}}$ the component of $\mathbf{x}$ in
$F(t_{\mathbf{x}})$. If for fixed $t\ge0$ $\mathbf{x}\in F(t)$,
then $t_{\mathbf{x}}\le t$ and $M_{\mathbf{x}}\subset F(t)$. Thus,
$\mathcal{F}$ is implied by $\{ \forall\mathbf{x}\in b^5\dvtx
M_{\mathbf
{x}}\cap\mathcal{R}(H^{5.5})\ne\varnothing\} $,
which is in turn implied by $\{ \forall\mathbf{x}\in b^5\dvtx
M_{\mathbf
{x}}\cap b^{5.5}\cap\mathcal{R}_{H}\ne\varnothing\} $.
Since $F(t)$ is $b$-boundary-connected for all $t$, $M_{\mathbf
{x}}\cap b^{5.5}$
is of size at least $n/2$ for all $\mathbf{x}\in b^5$. By Lemma~\ref
{lemsetofsizenalphahit}, the probability none of the traversals
in $H$ hit $M_{\mathbf{x}}\cap b^{5.5}$ decays exponentially in
$n$. Thus, by union bound for some $c_{6}(\rho)$
\[
\mathbb{P}_{H}\bigl[\exists\mathbf{x}\in b^5\dvtx
M_{\mathbf{x}}\cap b^{5.5}\cap\mathcal{R}=\varnothing\bigr]
\le\bigl|b^5\bigr|\exp \bigl(-cn^{1-2/d}\bigr)<\exp\bigl(-c'n^{1-2/d}
\bigr),
\]
and we are done.
\end{pf}

\begin{thmm}
\label{thmmdenseboxesare0good}Fix $\rho>0$. Let $H$
be a $B(n)$-itinerary and let $F(t)$ be a $B$-boundary-connected-path.
There is a $C(\rho)$, $D$ such that for $n>C$
\[
\mathbb{P}_{H}\bigl[\bigl\{ \mathcal{R}\cup F(t)\dvtx t\ge0\bigr\}
\subset \Good_{0}^{\rho}(n) | \mathcal{D}_{\Dns\rho}^{\sigma}
\bigr]>1-\frac{D}{n^d}.
\]
\end{thmm}
\begin{pf}
See Section~\ref{subk-good} for the properties each subbox must possess
relative to $\mathcal{R}\cup F(t)$ for the above to hold. Using
Lemma~\ref{lemrhotraversedsubboxgerhodenseitinerary} and a union
bound on $\sigma(B)$, it suffices to show that for any fixed $b\in
\sigma(B)$,
any $\Dns\rho$-dense $b$-itinerary $h$ and any $b$-boundary-connected-path
$f(t)$,
%
\begin{eqnarray}
\mathbb{P}_{h}\biggl[|\mathcal{R}\cap b|\ge\biggl(\rho
c_{h}\wedge \frac{1}{2}\biggr)|b|\biggr]&>&1-n^{-2d},\label{eq0good1}
\\
\qquad\mathbb{P}_{h}\bigl[\forall t\ge0,\bigl(\mathcal{R}\cup f(t)\bigr)
\cap b^5\mbox{ is connected in }\bigl(\mathcal{R}\cup f(t)\bigr)\cap
b^7\bigr]&>&1-n^{-2d}.\label{eq0good2}
\end{eqnarray}
Let $m=\|b\|$. Using Lemma~\ref{lemsetofsizenalphahit} with
$M=b,\alpha=d,\beta=1$ and $c_{h}=\Dns c_{g}$, we get that the LHS
of \eqref{eq0good1} is greater than $1-\exp(-\rho c_{h}m^{d-2})$.
By Lemma~\ref{lemallconnected}, the LHS of \eqref{eq0good2}
is greater than $1-\exp(-cm^{(d-2)/d})$.\vspace*{1pt}

Since $m=\lceil\log^{4}n\rceil$ is $\omega(\log^{{d}/{(d-2)}}n)$, we
are done.
\end{pf}


\section{Renormalization}\label{secRenormalization}

Refer to Sections~\ref{subRandom-walk-definitions}, \ref
{subIndependence-of-a-traversal}
and Definition~\ref{defDrhosigma} for the definitions of $\tau_{\rho}$,
an itinerary, a boundary-connected-path and $\mathcal{D}_{\rho
}^{\sigma}$,
used in this section.
%
\begin{thmm}
\label{thmmRenormalization}For any $u>0$, there is a $\rho(u)>0$
such that for any $k>0$,
\[
\mathbf{P}_{\mathbf{0}}\bigl[\forall t\ge uN^{d},\mathcal{T}(N)
\mbox{ is a }\bigl(\mathcal{R}_{N}(t),k,\rho\bigr)\mbox{-good torus}
\bigr]\stackrel {N} {\longrightarrow}1.
\]
\end{thmm}
\begin{pf}
Let $\mathcal{F}_{N}^{T}(b,k,\rho)$\label{pg23} be the event that a box $b$
is $(\proj^{-1}\circ\mathcal{R}_{N}(t),k,\rho)$-good
for all $t\ge T$. Since the number of top-level boxes for $\mathcal{T}(N)$
is bounded, the theorem follows by definition of a good torus if we
show that for some $\rho>0$, $\mathbf{P}_{\mathbf{0}}[\mathcal
{F}_{N}^{uN^{d}}(b)]\stackrel{N}{\to}1$
uniformly for an arbitrary top-level box $b$.

By translation invariance the above follows from showing that for
$B=B(\mathbf{0},\lceil N/10\rceil)$
\[
\mathbf{P}_{\mathbf{x}}\bigl[\mathcal{F}_{N}^{uN^{d}}(B)
\bigr]\stackrel {N} {\to}1,
\]
uniformly for $\mathbf{x}\in\mathbb{Z}^{d}$. For $\rho>0$, $\tau
_{\rho
}=\tau_{\rho}(N)$
is a random function of $S_{N}(\cdot)$ defined in Section~\ref
{subIndependence-of-a-traversal}.
Roughly, $\tau_{\rho}$ is the time it takes $S_{N}(\cdot)$ to make
$\rho
\|B\|^{d-2}$
top to bottom crossings of $\proj(B^7)$. In Lemma~\ref
{lemtoplevelhitnd-2times}, we show there is a $\rho(u)>0$ for
which $\mathbf{P}_{\mathbf{x}}[\tau_{\rho}>uN^{d}]\stackrel
{N}{\to}0$
uniformly for $\mathbf{x}\in\mathbb{Z}^{d}$. Since
\[
\mathcal{F}_{N}^{uN^{d}}(B)\supset\mathcal{F}_{N}^{\tau_{\rho
}}(B)
\setminus\bigl\{ \tau_{\rho}>uN^{d}\bigr\},
\]
it is thus enough to show $\mathbf{P}_{\mathbf{x}}[\mathcal
{F}_{N}^{\tau_{\rho}}(B)]\stackrel{N}{\to}1$
uniformly for $\mathbf{x}\in\mathbb{Z}^{d}$.

A $\rho$-dense $B$-itinerary (defined in Section~\ref{subRandom-walk-definitions})
is essentially a product space of $\rho\|B\|^{d-2}$ SRWs conditioned
to cross $B^7$ from top to bottom. A $B$-boundary-connected-path
$F(t)$ is a map from $\mathbb{Z}^{\ge0}$ to $2^{B^7}$ with certain
properties (defined before Lemma~\ref{lemPNgePrhoandFt}).
By Lemma~\ref{lemPNgePrhoandFt} there is a $\rho$-dense $B$-itinerary
$H$ (independent of $\mathbf{x}$) and a $B$-boundary-connected-path
$F(t)$ (dependent on $\mathbf{x}$) such that
\[
\mathbf{P}_{\mathbf{x}}\bigl[\mathcal{F}_{N}^{\tau_{\rho}}(B)
\bigr]\ge \mathbb{P}_{H}\bigl[\bigl\{ \mathcal{R}\cup F(t)\dvtx t\ge0
\bigr\} \subset \Good_{k}^{\rho}\bigr].
\]
By Corollary~\ref{correcursive} below, the RHS approaches one as
$N$ tends to infinity uniformly for $\rho$-dense $B(\lceil N/10\rceil
)$-itineraries
[and independently of $F(t)$].
\end{pf}
In the below lemma, we use a dimensional constant $p_{d}<1$ from
Corollary~\ref{corDominatingfieldpercolates}.
%
\begin{lem}
\label{lemHjgoodifhj-1good}Let $B=B(n)$, fix $j>0$ and $\rho>0$.
Let $C_{1}(\rho,j)$ be such that for any $\rho\Dns$-dense $B$-itinerary
$h$, $B$-boundary-connected-path $f(t)$, and $n>C_{1}$ we have
\[
\mathbb{P}_{h}\bigl[\bigl\{ \mathcal{R}\cup f(t)\dvtx t\ge0\bigr\}
\subset \Good_{j-1}^{\rho}\bigr]>p_{d}.
\]
Then for all $p<1$ there is a $C_{2}(p,\rho)$ such that for any
$\rho$-dense $B$-itinerary $H$, any $B$-boundary-connected-path
$F(t)$ and all $n>C_{2}$
\[
\mathbb{P}_{H}\bigl[\bigl\{ \mathcal{R}\cup F(t)\dvtx t\ge0\bigr\}
\subset \Good_{j}^{\rho}\bigr]>p.
\]
\end{lem}
\begin{pf}
Fix a $\rho$-dense $B(n)$-itinerary $H$ and a $B$-boundary-connected-path
$F(t)$. Let $\sigma_{j}=|\sigma(B)|^{{1}/{d}}$ and
let
\[
\mathcal{S}=\bigl\{ b\in\sigma(B)\dvtx\forall t\ge0,b\mbox{ is }\bigl(\mathcal
{R}\cup F(t),j-1,\rho\Dns\bigr)\mbox{-good}\bigr\}.\vadjust{\goodbreak}
\]
Observe that if $\Delta\mathcal{S}\in\mathcal{P}(\sigma_{j})$ and
$\{ \mathcal{R}\cup F(t)\dvtx t\ge0\} \subset\Good_{0}^{\rho}$,
this implies that $\{ \mathcal{R}\cup F(t)\dvtx t\ge0\} \subset
\Good_{j}^{\rho}$.

See Definition~\ref{defDrhosigma} for the definition of $\mathcal
{D}_{\Dns\rho}^{\sigma}$,\label{pg222}
which is roughly, the event that each $b\in\sigma(B)$ is traversed
top to bottom at least $\Dns\rho\|b\|^{d-2}$ times. By Theorem~\ref
{thmmdenseimpliessubboxdensewhp},
for any $q>0$ there is a $C_{1}(q,\rho)$ such that for all $n>C_{1}$,
\[
\mathbb{P}_{H}\bigl[\mathcal{D}_{\Dns\rho}^{\sigma}
\bigr]>1-q.
\]
By Theorem~\ref{thmmdenseboxesare0good}, for any $q>0$ and all
$n>C_{2}(q,\rho)$,
\[
\mathbb{P}_{H}\bigl[\bigl\{ \mathcal{R}\cup F(t)\dvtx t\ge0\bigr\}
\subset \Good_{0}^{\rho} | \mathcal{D}_{\Dns\rho}^{\sigma}
\bigr]>1-q.
\]
Thus, if we also prove that for all $n>C_{3}(q)$
%
\begin{equation}
\mathbb{P}_{H}\bigl[\Delta\mathcal{S}\in\mathcal{P}(
\sigma_{j}) | \mathcal{D}_{\Dns\rho}^{\sigma}
\bigr]>1-q,\label{eqb0condkgood}
\end{equation}
then for $q<(1-p)/4$ and $n>C_{1}(q,\rho)\vee C_{2}(q,\rho)\vee C_{3}(q)$
\[
\mathbb{P}_{H}\bigl[\bigl\{ \mathcal{R}\cup F(t)\dvtx t\ge0\bigr\}
\subset \Good_{j}^{\rho}\bigr]>p>1-4q,
\]
and we are done.

Let $F_{j}\subset B(\sigma_{j})$ and let $b\in\sigma(B)$. Write
$\mathcal{F}_{j}(F_{j},b)$ for the event that $\Delta(\mathcal
{S}\setminus B_{\Delta}(b,20))=F_{j}$.
By Corollary~\ref{corDominatingfieldpercolates} (a consequence
of the main theorem in \cite{liggett1997domination}), to prove \eqref
{eqb0condkgood}
for all $n>C(p)$, it is enough to show that for any such $F_{j}$
for which $\mathbb{P}_{H}[\mathcal{D}_{\Dns\rho}^{\sigma},\mathcal
{F}_{j}]>0$,
%
\begin{equation}
\mathbb{P}_{H}\bigl[b\in\mathcal{S} | \mathcal{D}_{\Dns\rho}^{\sigma
},
\mathcal{F}_{j}\bigr]>p_{d}.\label{eqinduction1}
\end{equation}
Since $\{ b\in\mathcal{S}\} $ is a function of $
(\mathcal{R}\cup F(t))\cap b^7$,
by Lemma~\ref{lemrhotraversedsubboxgerhodenseitinerary},
\eqref{eqinduction1} follows from our assumption.
\end{pf}
%
\begin{cor}
\label{correcursive}Fix $k>0$ and $p<1$. Let $B=B(n)$, let $H$
be a $\varrho$-dense $B$-itinerary, $\varrho>0$ and let $F(t)$
be a $B$-boundary-connected-path. Then for all $n>C(\varrho,k,p)$
\[
\mathbb{P}_{H}\bigl[\bigl\{ \mathcal{R}\cup f(t)\dvtx t\ge0\bigr\}
\subset \Good_{k}^{\varrho}\bigr]>p.
\]
\end{cor}
\begin{pf}
W.l.o.g. $p>p_{d}$. Let $b=b(s^{(k)}(n))$. By Theorem~\ref
{thmmdenseboxesare0good}
for any $\varrho\Dns^{k}$-dense $b$-itinerary $h$, $b$-boundary-connected-path
$f(t)$, and whenever\break  $s^{(k)}(n)>C(1-p,\rho)$
\[
\mathbb{P}_{h}\bigl[\bigl\{ \mathcal{R}\cup f(t)\dvtx t\ge0\bigr\}
\in\Good _{0}^{\varrho\Dns^{k}}\bigr]>p.
\]
Iterate Lemma~\ref{lemHjgoodifhj-1good} with above $p$ from
$j=1,\rho=\varrho\Dns^{k-1}$ to $j=k,\rho=\varrho$ to finish.
\end{pf}

\section{Random interlacements}\label{secinter}
In this section, we prove Theorem~\ref{thmmInterlacementmain}.
Notation and definition of random interlacements appear in Appendix
\ref{appC}.

\begin{defn}Denote by $\om_{u',u}(\operatorname{Top}\rightarrow\operatorname{Bot})$ the
set of trajectories $w\in\operatorname{Supp}(\om_{u',u}(W^{*}_\mathrm{Top}))$
such that the first exit position of $w$ from $B^7$ is in $\operatorname
{Bot}\subset\partial B^7$.
\end{defn}
%
\begin{defn}
Let $u_\rho=\inf\{u>0\dvtx|\om_u(\operatorname{Top}\rightarrow\operatorname
{Bot})|>\rho
N^{d-2}\}$.
%
\begin{lem}
$u_\rho$ is finite $\pr$ a.s.
\end{lem}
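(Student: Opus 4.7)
The plan is to recognise $N_u:=|\om_{u}(\text{Top}\to\text{Bot})|$ as a Poisson counting functional of the random interlacement process and to show that its mean grows unboundedly in $u$, so that the defining event in the infimum is attained for all sufficiently large $u$, almost surely. Since $\om_u$ is obtained from a Poisson point process on (equivalence classes of) doubly-infinite trajectories with intensity measure proportional to $u$ and to the interlacement trajectory measure $\nu$ restricted to trajectories hitting $B^7$, the restriction to those trajectories whose first exit from $B^7$ lies in $\text{Bot}\subset\partial B^7$ is again a Poisson point process, with intensity $u\cdot \mu_N$ for some finite measure $\mu_N$ depending on $N$ (and on the fixed geometry of $B$, $\text{Top}$, $\text{Bot}$).

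First I would verify that $\lambda_N:=\mu_N(\text{Top}\to\text{Bot})$ is strictly positive. This amounts to showing that there is a positive-measure set of interlacement trajectories that enter $B^7$ through $\text{Top}$ and whose first exit from $B^7$ lies in $\text{Bot}$. By the explicit formula for $\nu$ in terms of equilibrium measure and exit distributions (see Appendix C), this probability is controlled below by the hitting probability of $\text{Top}$ from far away, times the positive probability that a simple random walk started in $\text{Top}$ exits $B^7$ through $\text{Bot}$; the latter is bounded below by a positive constant via a standard invariance-principle / corridor argument analogous to the one used in Lemma~\ref{lem:foreveryurthereisrho}. Hence $\lambda_N>0$.

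Next, because $N_u$ is Poisson with parameter $u\lambda_N$ and is nondecreasing in $u$ (the underlying point process is constructed coupled across $u$ by adding points as $u$ grows), we have $\pr[N_u>\rho N^{d-2}]\to 1$ as $u\to\infty$. In fact, by monotonicity and the Borel--Cantelli lemma applied along an increasing sequence $u_k\to\infty$, one gets $N_u\to\infty$ $\pr$-a.s. as $u\to\infty$. Consequently, the set $\{u>0:N_u>\rho N^{d-2}\}$ is almost surely nonempty (and is an upward-closed subset of $(0,\infty)$ by monotonicity), so its infimum $u_\rho$ is a well-defined nonnegative real number almost surely.

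The only mildly delicate point is the first step, namely the strict positivity of $\lambda_N$; everything else is a standard monotone-Poisson argument. Since $\lambda_N$ may be very small in $N$, one should not expect any uniform quantitative statement here, but for the mere well-definedness of $u_\rho$ (for each fixed $N$) positivity is all that is required.
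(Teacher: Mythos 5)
Your proof is correct and follows essentially the same route as the paper: both exploit the monotonicity of $\om_u$ in $u$ together with the Poissonian independence structure (you via the Poisson law of $|\om_u(\text{Top}\rightarrow\text{Bot})|$ with intensity $u\lambda_N>0$, the paper via independent unit level-slices and Borel--Cantelli) to conclude the count diverges a.s., so the threshold $\rho N^{d-2}$ is exceeded at some finite level. The only difference is that you also sketch why the intensity is strictly positive, a point the paper simply asserts as $p(N)>0$.
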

\begin{pf}
Denote by $p(N)=\pr[|\om_1(\operatorname{Top}\rightarrow\operatorname{Bot})|\ge1]$.
Then for every $N$, $p(N)>0$. By independence between $\om_{u,u'}$ and
$\om_{v,v'}$ for $u<u'\le v\le v'$, we obtain by the Borel--Cantelli
lemma that
\begin{eqnarray*}
&&\pr\bigl[\exists k, \bigl|\om_k(\operatorname{Top}\rightarrow\operatorname{Bot})\bigr|\ge\rho
N^{d-2}\bigr]\\
&&\qquad\ge\pr\Bigl[\limsup_{i\rightarrow\infty}\bigl|
\om_{i,i+1}(\operatorname {Top}\rightarrow\operatorname{Bot})\bigr|\ge1\Bigr]=1.
\end{eqnarray*}
\upqed\end{pf}
\end{defn}
%
We now prove the equivalent of Lemma~\ref{lemforeveryurthereisrho}.
%
\begin{lem}
For every $u>0$, there is a $\rho(u)>0$ such that
\[
\lim_{N\rightarrow\infty}\pr[ u_\rho<u]=1.
\]
\end{lem}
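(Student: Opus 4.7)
The plan is to mirror the strategy of Lemma \ref{lem:foreveryurthereisrho}, replacing the random-walk computation with the Poisson structure underlying random interlacements. The target is to show that the number of trajectories in $\om_u(\text{Top}\to\text{Bot})$ is at least of order $u\,N^{d-2}$ with high probability, and then to deduce $u_\rho < u$ for an appropriate $\rho(u)$.

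First, by the construction of random interlacements recalled in Appendix~C, $|\om_u(W^{*}_{\text{Top}})|$ is Poisson with parameter $u\cdot\cp(\text{Top})$, and conditionally on its value the trajectories are independent; each one hits $\text{Top}$ for the first time at a vertex distributed according to the normalized equilibrium measure $\tilde{e}_{\text{Top}}$, after which its forward part evolves as a simple random walk. Since $\text{Top}$ is essentially a $(d-1)$-dimensional face of $B^{7}$ of side length of order $N$, a standard capacity estimate in $d\ge 3$ gives $\cp(\text{Top})\ge c_{1}(d)\,N^{d-2}$.

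Next I would lower-bound, uniformly in $\mathbf{x}\in\text{Top}$, the probability that a SRW started at $\mathbf{x}$ first exits $B^{7}$ through $\text{Bot}$. By Donsker's invariance principle, after rescaling $B^{7}$ to the unit cube, this probability converges as $N\to\infty$ to the positive Brownian exit probability through the opposite face of a $d$-dimensional box; in particular there is a dimensional constant $c_{2}(d)>0$ such that $\mathbf{P}_{\mathbf{x}}[\tau_{\partial B^{7}}=\tau_{\text{Bot}}]\ge c_{2}$ for every $\mathbf{x}\in\text{Top}$ and all $N$ large enough.

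Poisson thinning now shows that $|\om_u(\text{Top}\to\text{Bot})|$ is itself Poisson with parameter at least $\lambda_{N}:=c_{1}c_{2}\,u\,N^{d-2}$. Setting $\rho(u):=c_{1}c_{2}u/8$ and applying a standard Chernoff bound (equivalently, Lemma \ref{lem:concentration_of_indicators} via the Bernoulli decomposition of a Poisson variable) gives
\[
\pr_{\om}\bigl[\,|\om_{u/2}(\text{Top}\to\text{Bot})|<\rho N^{d-2}\,\bigr]\;\le\;e^{-cN^{d-2}}\;\to\;0 .
\]
On this high-probability event, $u_{\rho}\le u/2<u$, which is the claim. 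The only step that genuinely invokes interlacement-specific machinery is the Poisson/thinning identification in the first paragraph; both the capacity lower bound and the exit-probability estimate are classical SRW inputs of the same flavor as those used elsewhere in the paper, and I would expect no new obstacle beyond carefully matching conventions with Appendix~C.
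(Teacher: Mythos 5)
Your proposal is correct and follows essentially the same route as the paper: the Poisson structure of $\om_u(W^*_{\text{Top}})$ with intensity $u\,\mathrm{cap}(\text{Top})\asymp N^{d-2}$, a uniform lower bound via the invariance principle on the probability of exiting $B^7$ through $\text{Bot}$, thinning to dominate a Poisson of mean $c\,u\,N^{d-2}$, and a concentration bound to conclude. The only differences are cosmetic (Chernoff at level $u/2$ instead of Chebyshev at level $u$, which if anything handles the strict inequality $u_\rho<u$ slightly more cleanly).
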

\begin{pf}
First, $|\om_{u}(W^{*}_\mathrm{Top}) |$ is $\operatorname{Poisson}
(u\mathrm{Cap}(\mathrm{Top}))$
distributed. There\break  exists a dimension dependent constant $c'_d$ such
that, $\mathrm{cap}(\mathrm{Top})=c'_dN^{d-2}$ (see \cite{lawler2010random}
Proposition~6.5.2). By the invariance principle, there is a dimension dependent
constant $c_d>0$, such that $\min_{x\in\operatorname{Top}}\prob^x[X_{\tau
_{B^7}}\in\operatorname{Bot}]\ge c_d$, for\break  large enough~$N$.
Thus, $|\om_{u}(\operatorname{Top}\rightarrow\operatorname{Bot})|$
stochastically
dominates a\break  $\operatorname{Poisson}(uc_dN^{d-2})$ distribution.
Now take any $\rho<uc_d$, and by Chebyshev's inequality we obtain that
\[
\pr\bigl[\bigl|\om_{u}(\operatorname{Top}\rightarrow\operatorname{Bot})\bigr|<\rho
N^{d-2}\bigr]\mathop{\rightarrow}_{N\rightarrow\infty}0,
\]
which concludes the lemma.
\end{pf}
%
\begin{lem}
For $N>0$ large enough fix a box $b=b(\lceil N/10\rceil)$,
$\rho>0$, $u>u_\rho$. Then for any $\mathcal{A}\subset2^{b^7}$
there is a $\rho$-dense $b$-itinerary $H=H(\mathcal{A})$ and a
$b$-boundary-connected-path $F(t)=F(\mathbf{x},t)$ such that
\[
\pr\bigl[\CI^u\cap b^7\in\CA\bigr]\ge
\mathbb{P}_{H}\bigl[\bigl\{ \mathcal{R}\cup F(t)\dvtx t\ge0\bigr\} \in
\mathcal{A}\bigr].
\]
\end{lem}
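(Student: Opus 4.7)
The plan is to mirror the proof of Lemma \ref{lem:PN_ge_Prho_and_F(t)}, replacing the Markov decomposition of a single random walk by the independence structure of the Poisson point process $\omega_u$. The preceding lemma guarantees, for any $u>u_\rho$, at least $\rho N^{d-2}$ trajectories in $\omega_u(\text{Top}\to\text{Bot})$ almost surely; absorbing the factor $10^{d-2}$ coming from $\|b\|=\lceil N/10\rceil$ into $\rho$, these furnish $\lceil\rho\|b\|^{d-2}\rceil$ many $b$-traversals, which is exactly the cardinality required by a $\rho$-dense $b$-itinerary.

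Concretely, I would enumerate the trajectories $w\in\omega_u(\text{Top}\to\text{Bot})$ and, for each such $w$, let $\alpha(w)\in\text{Top}(b)$ be its first entry into $b^7$ (which lies in $\text{Top}(b)$ by the definition of $\omega_u(\text{Top}\to\text{Bot})$) and $\zeta(w)\in\text{Bot}(b)$ its first exit. The pair $\eta_w=(\alpha(w),\zeta(w))$ is a $b$-traversal, and the collection $H=(\eta_w)_w$ forms a $\rho$-dense $b$-itinerary. Next, in the spirit of Lemma \ref{lem:PN_ge_Prho_and_F(t)}, I would partition the probability space into events $\mathcal{B}$ that fix (i) the explicit sample path of every trajectory in $\omega_u(W^*_b)$ outside the window between $\alpha(w)$ and $\zeta(w)$ (including, for trajectories not in $\omega_u(\text{Top}\to\text{Bot})$, the entire trajectory), and (ii) the entry-exit endpoints $(\alpha(w),\zeta(w))_w$. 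On any $\mathcal{B}$ with $\pr[\mathcal{B}]>0$, both $H$ and the leftover set $F$ (consisting of all points of $\text{trace}(\omega_u(W^*_b))\cap b^7$ not on any traversal interior) are determined; because the walk is transient in $d\ge 3$, every non-traversal excursion into $b^7$ is a finite path whose endpoints lie on $\partial^{in}b^7$, and hence $F$, taken constant in $t$, is a valid $b$-boundary-connected-path.

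The main technical point is the conditional-independence claim: under $\pr[\cdot\mid\mathcal{B}]$, the traversal middle segments $\{S_w(0,\tau_{\partial b^7})\}_w$ should be mutually independent, with $S_w$ distributed as $\mathbf{P}^{\eta_w}$. This follows from the construction of $\omega_u$ as a Poisson point process whose atoms are independent random walks (so distinct trajectories decouple), together with the strong Markov property applied to each walk $S_w$ at its entry time $\alpha(w)$ into $b^7$ under the conditioning $\{\tau_{\partial b^7}=\tau_{\zeta(w)}\}$. Once this is granted, the union of the middle segments has exactly the joint law of $\mathcal{R}$ under $\mathbb{P}_H$, the set identity $\text{trace}(\omega_u(W^*_b))\cap b^7=\mathcal{R}\cup F$ holds pointwise on $\mathcal{B}$, and selecting the partition element of maximum conditional probability (via the partition identity analogous to Proposition \ref{pro:partition}) yields the claimed inequality.
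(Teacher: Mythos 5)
Your proposal follows essentially the same route as the paper: order the trajectories of $\om_u(\text{Top}\rightarrow\text{Bot})$, read off from each of the first $\lceil\rho N^{d-2}\rceil$ of them a $b$-traversal, collect the leftover trace into a time-constant set $F$, and use the Poisson structure of the interlacement process together with the Markov property (and a partition argument as in Proposition \ref{pro:partition}) to compare the law of the traversal segments with $\mathbb{P}_H$, exactly as in Lemma \ref{lem:PN_ge_Prho_and_F(t)}. In fact you supply more of the conditioning detail than the paper, whose proof states the resulting identity directly.

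One step as written is incorrect, though, and needs the paper's choice of endpoints. You set $\alpha(w)$ to be the first entry of the trajectory into $b^7$ and assert that it lies in $\text{Top}(b)$ ``by the definition of $\om_u(\text{Top}\rightarrow\text{Bot})$''. That is not what the definition gives: a trajectory of $\om_u(W^*_{\text{Top}})$ is doubly infinite, and in the canonical decomposition of $W^*_{\text{Top}}$ its backward part is only conditioned to avoid $\text{Top}$; it may well enter $b^7$ through a side face or through $\text{Bot}$ before the forward part begins. So the first entrance into $b^7$ need not lie in $\text{Top}$, the pair $(\alpha(w),\zeta(w))$ need not be a $b$-traversal, and the conditional law of the segment between these times is not the bridge measure $\mathbf{P}^{\eta_w}$. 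The repair is what the paper does: take $\alpha(w)=X_0(w)$, the canonical starting point of the trajectory (its first visit to $\text{Top}$), and $\zeta(w)$ the first exit of the forward walk from $b^7$ after time $0$, which lies in $\text{Bot}$ by the definition of $\om_u(\text{Top}\rightarrow\text{Bot})$. With that choice your partition (backward parts, post-exit forward parts, non-traversing trajectories, and endpoints), the independence of distinct Poisson atoms, and the Markov property do give the product law you claim, and your leftover set $F$ is $b$-boundary-connected for the reason you state, since every leftover excursion inside $b^7$ either starts at $X_0\in\text{Top}\subset\partial^{in}b^7$ or enters $b^7$ at a point of $\partial^{in}b^7$.
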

\begin{pf}
Since $u>u_\rho$ we know $|\om_u(\operatorname{Top}\rightarrow\operatorname
{Bot})|>\rho
N^{d-2}$. Order the trajectories in $\om_u(\operatorname{Top}\rightarrow
\operatorname
{Bot})$ by some arbitrary but fixed method. For every $1\le i\le\rho
N^{d-2}$ and trajectory $w_i\in\om_u(\operatorname{Top}\rightarrow\operatorname{Bot})$
denote by  $a(i)\in\operatorname{Top}$, the starting point of $w_i$ and by
$z(i)\in\operatorname{Bot}$, the exit point. For every $1\le i\le\rho N^{d-2}$
let $\eta_i=(a(i),z(i))$, and $H=(\eta_1,\ldots,\eta_{\lceil
\rho N^{d-2}\rceil})$. For all $t>0$, let $F(t)=\bigcup_{w\in\operatorname{Supp}(\om_{u}(W^{*}_{b^7}))}\operatorname{range}(w)\cap
b^7\setminus
\bigcup_{i=1}^{\rho N^{d-2}}\operatorname{range}(w_i)$. Then
\[
\pr\bigl[\operatorname{range}\bigl(\om\bigl({W^*_{b^7}}\bigr)\bigr)\cap
b^7\subset\mathcal{A} \bigr]=\mathbb{P}_{H}\bigl[\bigl\{
\mathcal{R}\cup F(t)\dvtx t\ge0\bigr\} \subset \mathcal{A}\bigr].
\]
\upqed\end{pf}

\begin{thmm}\label{thmmgoodbnd}
For every $k\in\BN$ and $u,\rho>0$, there exists a constant $\alpha
(k,\rho)$ such that
\[
\pr\bigl[\CI^u\notin\CG^\rho_k(n)\bigr]\le
\frac{\alpha}{n^{2}}.
\]
\end{thmm}
\begin{pf}
The proof follows Theorem~\ref{thmmRenormalization} without the union
on top level boxes.
\end{pf}

We now prove the bound on the heat kernel of random interlacements.
%
\begin{thmm}\label{thmmintheatbnd}
Let $u>0$ and let $X_n$ be a random walk on the graph $\CI^u$. For
large enough $N$, if $\CI^u\in\CG_k^\rho(N)$ and $0\in\CI^u$, there
exists a constant $C(k,\rho)$ such that
\[
\prob^u_0[X_N=0]\le\frac{C(k,\rho)\log^{(k-1)}(N)}{N^{{d}/{2}}}.
\]
\end{thmm}
\begin{pf}
Let $\varepsilon>0$. By \cite{morris2005evolving} (Theorem~2), there
exists a constant $\tilde{c}$ such that if for some $\varepsilon>0$
%
\begin{equation}
\tilde{c}\int_1^{{4} /{\varepsilon}}
\frac{dr}{r\phi^2(r)}\le n,
\end{equation}
then $\prob_0[X_n=0]\le\varepsilon$. In order to bound $\prob
_0[X_n=0]$ it
is enough to consider the isoperimetric constant of sets inside $B(n)$.
Indeed consider a new graph $\tilde{\CI}^u$ which is the same as $\CI
^u$ inside $B(n)$ but all the edges are open outside $B(n)$. Since a
random walk cannot leave $B(n)$ before time $n$, it is enough to prove
the theorem for the graph $\tilde{\CI}^u$. Next, we prove an
isoperimetric inequality for the graph $\tilde{\CI}^u$. For every set
$A\subset\Z^d$, such that $|A|>n^{1/3}$, if $A\cap B(n)=\phi$ then by
the isoperimetric inequality of $\Z^d$, $|\partial A|\ge|A|^{
{(d-1)}/{d}}$. If $A\cap B(n)\neq\phi$ and $|A\cap B(n)^c|\ge\frac
{1}{2}|A|$, by the triangle inequality and isoperimetric inequality of
$\Z^d$,
\[
|\partial A|\ge\bigl|\partial\bigl( A\cap B(n)^c\bigr)\bigr|\ge\bigl(\bigl|A\cap
B(n)^c\bigr|\bigr)^{ {(d-1)}/ {d}}\ge\biggl(\frac{1}{2}|A|
\biggr)^{{(d-1)}/ {d}}.
\]
If $A\cap B(n)\neq\phi$ and $|A\cap B(n)|>\frac{1}{2}|A|$, since
$|\partial A\cap B^c|\ge|A\cap\partial B|$ (a straight line between
two points is the shortest path)
\[
|\partial A|\ge\bigl|\partial(A\cap B)\bigr|>c(k,\rho) \bigl(\tfrac{1}{2}|A|
\bigr)^{{(d-1)}/{d}}\bigl(s^{(l)}(n)\bigr)^{d-1}.
\]

If $\phi(r)$ is realized by a set of size smaller than $N^{{1}/{3}}$,
then $\phi(r)\ge N^{-{1}/{3}}$.
By Theorem~\ref{lemphilowbnd},
%
\begin{eqnarray}
\int_1^{{4}/ {\varepsilon}}
\frac{dr}{r\phi^2(r)}&=&\int_1^{ {4}/ {
\varepsilon }}\frac{dr}{r({1}/{N^{{2}/{3}}})}+\int_{N^{{1}/{3}}}^{ {4}/
{\varepsilon}}\frac{dr}{r\hat{\phi}^2(r)}\nonumber\\
& =& N^{{2}/ {3}}\log\biggl(
\frac{4}{\varepsilon}\biggr)+\int_{N^{
{1}/{3}}}^{{4} /{\ep}}
\frac{c(k,\rho)^{-1}(s^{(k)}(N)
)^{2d-2}\,dr}{rr^{-{2}/{d}}}
\\
&=&N^{{2}/{3}}\log\biggl(\frac{4}{\varepsilon}\biggr)+\frac{c'
(s^{(k)}(N))^{2d-2}}{\ep^{{d}/{2}}}.\nonumber
\end{eqnarray}
Thus, if $\ep\ge\frac{c''(s^{(k)}(N))^{2d-2}}{N^{
{d}/{2}}}$, $\prob_0[X_N=0]\le\frac{c''(s^{(k)}(N)
)^{2d-2}}{N^{{d}/{2}}}\le\frac{c'''\log^{k-1}(N)}{N^{{d}/{2}}}$.
\end{pf}
The proof of Theorem~\ref{thmmInterlacementmain} follows from
Theorems~\ref{thmmintheatbnd} and~\ref{thmmgoodbnd}.

\begin{appendix}
\section{}\label{appA}

Recall the notation from Section~\ref{subIndependence-of-a-traversal} and
let $\tau_{0}=0$, and for $\mathbf{z}(i)\in G,m_{i}\in\mathbb{N}$
where $i\ge1$ recursively define
\[
\tau_{i}=\tau_{i}\bigl(\bigl\{\mathbf{z}(i)\bigr\},
\{m_{i}\}\bigr)=\inf\bigl\{ t>\tau _{i-1}+m_{i-1}
\dvtx S(t)=\mathbf{z}(i)\bigr\}.
\]

\begin{prop}
\label{proproductformindependent}Fix $n\in\mathbb{N}$, $s_{0},\ldots,s_{n},g_{1},\ldots,g_{n}\in\mathbf{S}(G)$
and $C_{1},\ldots,\break  C_{n}\subset\mathbf{S}(G)$. Set $\mathbf
{z}(i)=s_{i}(0),m_{i}=\|s_{i}\|,\mathbf{a}(i)=s_{i}(m_{i})$.
Define the events $\mathcal{A}=\bigcap_{i=0}^{n}S(\tau_{i},\tau
_{i}+m_{i})=s_{i}$,
$\mathcal{B}(A_{1},\ldots,A_{n})=\bigcap_{i=1}^{n}S(\tau
_{i-1}+m_{i-1},\tau_{i})\in A_{i}$.
Writing $\mathcal{B}_{g}$ for $\mathcal{B}(\{g_{1}\},\ldots,\{g_{n}\})$
and $\mathcal{B}_{C}$ for $\mathcal{B}(C_{1},\ldots,C_{n})$ and
assuming\break  $\mathbf{P}_{\mathbf{z}(0)}[\mathcal{A},\mathcal
{B}_{C}]>0$
we have
\[
\mathbf{P}_{\mathbf{z}(0)}[\mathcal{A},\mathcal{B}_{g} | \mathcal
{A},\mathcal{B}_{C}]=\prod_{i=1}^{n}
\mathbf{P}_{\mathbf
{a}(i)}\bigl[S(0,\tau_{\mathbf{z}(i)})=g_{i} | S(0,
\tau_{\mathbf
{z}(i)})\in C_{i}\bigr].
\]
\end{prop}

\begin{pf}
See Figure~\ref{figproductform} for an illustration. Observe that
if for some $i$, $g_{i}\notin C_{i}$, then both sides are $0$,
thus we assume $g_{i}\in C_{i}$.
\[
\mathbf{P}_{\mathbf{z}(0)}[\mathcal{A},\mathcal{B}_{g} | \mathcal
{A},\mathcal{B}_{C}]=\mathbf{P}_{\mathbf{z}(0)}[\mathcal {A},
\mathcal{B}_{g}]/\mathbf{P}_{\mathbf{z}(0)}[\mathcal {A},
\mathcal{B}_{C}].
\]
Let $W_{1},\ldots,W_{n}\subset\mathbf{S}(G)$ be with the property
that for each $1\le i\le n$ and $w\in W_{i}$, $w=(\mathbf
{a}(i),\mathbf
{v}_{1},\ldots,\mathbf{v}_{k},\mathbf{z}(i))\in\mathbf{S}(G)$,
$\mathbf{v}_{j}\neq\mathbf{z}(i)\ \forall1\le j\le k$. For each
$(w_{1},\ldots,w_{n})\in W_{1}\times\cdots\times W_{n}$,
we decompose $\mathcal{A},\mathcal{B}(\{w_{1}\},\ldots,\{w_{n}\})$
according to the Markov property and sum to get
%
\begin{eqnarray}\label{eqeventtoproduct}
&&\mathbf{P}_{\mathbf{z}(0)}\bigl[\mathcal{A},\mathcal{B}(W_{1},\ldots,W_{n})
\bigr]\nonumber\\
&&\qquad =\Biggl(\prod_{i=1}^{n}
\mathbf{P}_{\mathbf{z}(i-1)}\bigl[S(0,m_{i-1})
=s_{i-1}\bigr]\mathbf{P}_{\mathbf{a}(i)}\bigl[S(0,
\tau_{\mathbf
{z}(i)})\in W_{i}\bigr]\Biggr)\\
&&\qquad\quad{}\times\mathbf{P}_{\mathbf{z}(n)}
\bigl[S(0,m_{n})=s_{n}\bigr].\nonumber
\end{eqnarray}

\begin{figure}

\includegraphics{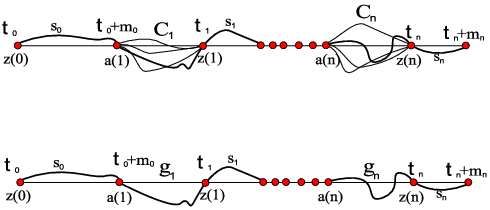}

\caption{Scheme of $\mathcal{A},\mathcal{B}_{C}$
on top and of $\mathcal{A},\mathcal{B}_{g}$ on bottom.}\label{figproductform}
\end{figure}

Since we assume $\mathbf{P}_{\mathbf{z}(0)}[\mathcal{A},\mathcal
{B}_{C}]>0$,
we have that each $C_{i}$ consists of paths with the constraints
above. Using \eqref{eqeventtoproduct} with $W_{i}=C_{i}$ and
$W_{i}=\{g_{i}\}$ we get
\[
\mathbf{P}_{\mathbf{z}(0)}[\mathcal{A},\mathcal{B}_{g} | \mathcal
{A},\mathcal{B}_{C}]=\prod_{i=1}^{n}
\bigl(\mathbf{P}_{\mathbf
{a}(i)}\bigl[S(0,\tau_{\mathbf{z}(i)})=g_{i}
\bigr]/\mathbf{P}_{\mathbf
{a}(i)}\bigl[S(0,\tau_{\mathbf{z}(i)})\in
C_{i}\bigr]\bigr),
\]
and are done.
\end{pf}
%
\begin{prop}
\label{propartition}Let $\mathcal{X},\mathcal{Y}$ be events in
some probability space, and let $\{ \mathcal{Y}_{\alpha}\}
_{\alpha\in I}$
be a partition of $\mathcal{Y}$ where $\forall\alpha\in I,\Pr
[\mathcal{Y}_{\alpha}]>0$.
Then for some $\gamma,\Gamma\in I$,
\[
\Pr[\mathcal{X}|\mathcal{Y}_{\gamma}]\le\Pr[\mathcal {X}|\mathcal{Y}]\le
\Pr[\mathcal{X}|\mathcal{Y}_{\Gamma}].
\]
\end{prop}
\begin{pf}
Follows from the identity,
\[
\Pr[\mathcal{X}|\mathcal{Y}]=\sum_{\alpha\in I}\Pr [
\mathcal{X}|\mathcal{Y}_{\alpha}]\Pr[\mathcal{Y}_{\alpha
}]/\Pr[
\mathcal{Y}].
\]
\upqed\end{pf}
Recall $\mathcal{J}_{B}[b]$ from Definition~\ref{defDrhosigma}.
%
\begin{lem}
\label{lemlowbndhitsmallbox}Let $B=B(n)$, let $b\in\sigma(B)$,
where we write $m=\|b\|$. There is a $c_{\Dns}(d)>0$, independent
of $n$, such that for any $\mathbf{a}\in A(B),\mathbf{z}\in Z(B)$
and all large~$n$,
\[
\mathbf{P}_{\mathbf{a}}\bigl[\mathcal{J}_{B}[b]|\tau
_{\partial B^7}=\tau_{\mathbf{z}}\bigr]>c_{\Dns}\biggl(
\frac{m}{n}\biggr)^{d-2}.
\]
\end{lem}
\begin{pf}
For $\mathbf{y}=(y_{1},\ldots,y_{d})\in\mathbb{Z}^{d}$, define
$\mathcal
{B}(\mathbf{y})$
to be the event $S(\cdot)$ hits $b$ at $\mathbf{y}$ and then the first
coordinate of $S(\cdot)$ hits $y_{1}+4m$ and then hits $y_{1}-8m$, while
the maximal change in the other coordinates is less than $m$. Let
$\tau_{\mathcal{B}}(\mathbf{y})=\inf\{ t\ge0\dvtx S(0,t)\subset
\mathcal
{B}(\mathbf{y})\} $
be the first time the occurrence of $\mathcal{B}(\mathbf{y})$ is
implied by $S(t)$. By Proposition~\ref{proHitthenstop} for some
$\mathbf{x}\in b$,
\begin{eqnarray*}
&&\mathbf{P}_{\mathbf{a}}\bigl[\tau_{b},\tau_{\mathcal{B}}\bigl(S(
\tau _{b})\bigr)<\tau_{\partial B^7}|\tau_{\partial B^7}=
\tau_{\mathbf{z}} \bigr]\\
&&\qquad\ge\mathbf{P}_{\mathbf{a}}[\tau_{b}<
\tau_{\partial B^7}|\tau _{\partial B^7}=\tau_{\mathbf{z}}]
\mathbf{P}_{\mathbf{x}} \bigl[\tau_{\mathcal{B}}(\mathbf{x})<
\tau_{\partial B^7}|\tau_{\partial
B^7}=\tau_{\mathbf{z}}\bigr].
\end{eqnarray*}
Using Lemma~\ref{lemlowbndhitmdmassinternal} together with
Corollary~\ref{corlowbndPaxzorxyz}, we have
\[
\mathbf{P}_{\mathbf{a}}[\tau_{b}<\tau_{\partial B^7}|\tau
_{\partial B^7}=\tau_{\mathbf{z}}]>c_{1}\biggl(\frac{m}{n}
\biggr)^{d-2}.
\]
Since $\{ \tau_{\mathcal{B}}(\mathbf{x})<\tau_{\partial B^7}
\} \subset\mathcal{J}_{B}[b]$,
we are done if we show for some $c_{2}(d)>0$
%
\begin{equation}
\mathbf{P}_{\mathbf{x}}\bigl[\tau_{\mathcal{B}}(\mathbf{x})<\tau
_{\partial B^7}|\tau_{\partial B^7}=\tau_{\mathbf{z}} \bigr]>c_{2}.\label{eqc3}
\end{equation}
Partitioning over $S(\tau_{\mathcal{B}}(\mathbf{x}))\in b^{10}$ and
using the Markov property, we have for some $\mathbf{y}\in b^{10}$,
%
\begin{eqnarray}\label{eqtaumathcalA}
&&\mathbf{P}_{\mathbf{x}}\bigl[\tau_{\mathcal{B}}(\mathbf{x})<\tau
_{\partial B^7}|\tau_{\partial B^7}=\tau_{\mathbf{z}}\bigr]\mathbf
{P}_{\mathbf{x}}[\tau_{\partial B^7}=\tau_{\mathbf{z}}]
\nonumber
\\[-8pt]
\\[-8pt]
\nonumber
&&\qquad\ge
\mathbf{P}_{\mathbf{x}}\bigl[\tau_{\mathcal{B}}(\mathbf{x})<\tau
_{\partial B^7}\bigr]\mathbf{P}_{\mathbf{y}}[\tau_{\partial
B^7}=
\tau_{\mathbf{z}}].
\end{eqnarray}
By the invariance principle, $\mathbf{P}_{\mathbf{x}}[\tau
_{\mathcal{B}}(\mathbf{x})<\tau_{\partial B^7}]$
is bounded away from zero by a dimensional constant independent of
$\mathbf{x}$. Since $\mathbf{x},\mathbf{y}$ from \eqref{eqtaumathcalA}
are contained in $B^{6}$ for all large $n$, we use Lemma~\ref
{lemupnlowbndaxorxz}
to get \eqref{eqc3}.
\end{pf}
In the lemma below, we look at the number vertices hit in an interior set
$M\subset B^{6}$ by a $B$-traversal, and lower bound the probability
for this number to be small in terms of $|M|$.
%
\begin{lem}
\label{lemlowbndhitmdmassinternal}Let $B=B(n)$, let $M\subset B^{6}$,
set $\mathbf{a}\in B^7$ and $\mathbf{z}\in Z(B)$. Let $X(M)=|
\{ \mathbf{v}\in M\dvtx\tau_{\mathbf{v}}<\tau_{\partial B^7}\} |$
and let $\mu_{X}=\mu_{X}(\mathbf{a},\mathbf{z})=\mathbf
{E}_{\mathbf
{a}}[X(M) | \tau_{\partial B^7}=\tau_{\mathbf{z}}]$.
There is a $c_{1}(d)>0$, independent of $n$, $\mathbf{a}$ and $\mathbf{z}$,
such that for all large $n$
\[
\mathbf{P}_{\mathbf{a}}\bigl[X(M)\ge\tfrac{1}{2}\mu_{X} |
\tau _{\partial B^7}=\tau_{\mathbf{z}}\bigr]>c_{1}
\mu_{X}|M|^{-2/d}.
\]
Thus, if \textup{$\mathbf{P}_{\mathbf{a}}[\tau_{\mathbf{v}}<\tau
_{\partial B^7} | \tau_{\partial B^7}=\tau_{\mathbf{z}}]>f(n)$
}for every $\mathbf{v}\in M$, then since $X(M)=\sum_{\mathbf{v}\in
M}\mathbf{1}_{\{ \tau_{\mathbf{v}}<\tau_{\partial B^7}\} }$,
we have
\[
\mathbf{P}_{\mathbf{a}}\bigl[X(M)\ge\tfrac{1}{2}\mu_{X} |
\tau _{\partial B^7}=\tau_{\mathbf{z}}\bigr]>c_{1}|M|^{1-2/d}f(n).
\]
\end{lem}
\begin{pf}
Write $\mu_{X^{2}}$ for $\mathbb{E}_{\mathbf{a}}[X^{2} | \tau
_{\partial B^7}=\tau_{\mathbf{z}}]$.
By the Paley--Zygmund inequality, $\mathbf{P}_{\mathbf{a}}[X\ge
\frac{1}{2}\mu_{X}]\ge\frac{1}{4}\mu_{X}^{2}/\mu_{X^{2}}$,
so enough to show
\[
\mu_{X^{2}}<Cm^{2}\mu_{X},
\]
where $m=|M|^{1/d}$.\vadjust{\goodbreak}

By linearity,
\[
\mu_{X^{2}}=\sum_{\mathbf{v}\in M}\sum
_{\mathbf{w}\in M}\mathbf {P}_{\mathbf{a}}[\tau_{\mathbf{w}},
\tau_{\mathbf{v}}<\tau _{\partial B^7} | \tau_{\partial B^7}=
\tau_{\mathbf{z}}].
\]
For two vertices $\mathbf{x},\mathbf{y,}$ let $\tau_{\mathbf
{v},\mathbf
{w}}=\inf\{ t\ge\tau_{\mathbf{x}}\dvtx S(t)=\mathbf{y}\} $.
By a union bound
\begin{eqnarray*}
\mathbf{P}_{\mathbf{a}}[\tau_{\mathbf{w}},\tau_{\mathbf{v}}<\tau
_{\partial B^7} | \tau_{\partial B^7}=\tau_{\mathbf{z}}] & \le &
\mathbf{P}_{\mathbf{a}}[\tau_{\mathbf{w}}\le\tau_{\mathbf
{w},\mathbf{v}}<
\tau_{\partial B^7} | \tau_{\partial B^7}=\tau _{\mathbf{z}}]
\\
& & {}+\mathbf{P}_{\mathbf{a}}[\tau_{\mathbf{v}}\le \tau_{\mathbf
{v},\mathbf{w}}<
\tau_{\partial B^7} | \tau_{\partial B^7}=\tau _{\mathbf{z}}].
\end{eqnarray*}
By Bayes theorem and the Markov property,
\begin{eqnarray*}
&&\mathbf{P}_{\mathbf{a}}[\tau_{\mathbf{w}}\le\tau_{\mathbf
{w},\mathbf{v}}<\tau_{\partial B^7} | \tau_{\partial B^7}=\tau
_{\mathbf{z}}]\\
&&\qquad =  \frac{\mathbf{P}_{\mathbf{a}}[\tau_{\mathbf{w}}<\tau
_{\partial B^7} | \tau_{\partial B^7}=\tau_{\mathbf{z}}]\mathbf
{P}_{\mathbf{a}}[\tau_{\mathbf{w}}\le\tau_{\mathbf{w},\mathbf
{v}}<\tau_{\partial B^7}=\tau_{\mathbf{z}} | \tau_{\mathbf
{w}}<\tau
_{\partial B^7}]}{\mathbf{P}_{\mathbf{a}}[\tau_{\partial
B^7}=\tau_{\mathbf{z}} | \tau_{\mathbf{w}}<\tau_{\partial B^7}
]}
\\
&&\qquad =  \mathbf{P}_{\mathbf{a}}[\tau_{\mathbf{w}}<\tau_{\partial
B^7} |
\tau_{\partial B^7}=\tau_{\mathbf{z}}]\mathbf{P}_{\mathbf
{w}}[
\tau_{\mathbf{v}}<\tau_{\partial B^7} | \tau_{\partial
B^7}=
\tau_{\mathbf{z}}].
\end{eqnarray*}
Again by the Markov property,
\[
\mathbf{P}_{\mathbf{w}}[\tau_{\mathbf{v}}<\tau_{\partial B^7} |
\tau_{\partial B^7}=\tau_{\mathbf{z}}]=\frac{\mathbf{P}_{\mathbf
{v}}[\tau_{\partial B^7}=\tau_{\mathbf{z}}]\mathbf
{P}_{\mathbf{w}}[\tau_{\mathbf{v}}<\tau_{\partial B^7}
]}{\mathbf{P}_{\mathbf{w}}[\tau_{\partial B^7}=\tau_{\mathbf
{z}}]}.
\]
So by Lemma~\ref{lemupnlowbndaxorxz}, since $\mathbf{v},\mathbf
{w}\in M\subset B^{6}$,
\[
\mathbf{P}_{\mathbf{w}}[\tau_{\mathbf{v}}<\tau_{\partial B^7} |
\tau_{\partial B^7}=\tau_{\mathbf{z}}]<C\mathbf{P}_{\mathbf
{w}}[
\tau_{\mathbf{v}}<\tau_{\partial B^7}]<C\mathbf {P}_{\mathbf{w}}[
\tau_{\mathbf{v}}<\infty].
\]
Thus, by symmetry,
\[
\mu_{X^{2}}\le2\sum_{\mathbf{w}\in M}
\mathbf{P}_{\mathbf{a}}[\tau _{\mathbf{w}}<\tau_{\partial B^7} |
\tau_{\partial B^7}=\tau _{\mathbf
{z}}]\sum_{\mathbf{v}\in M}C
\mathbf{P}_{\mathbf{w}}[\tau _{\mathbf{v}}<\infty].
\]
By Markov's inequality, $\mathbf{P}_{\mathbf{w}}[\tau_{\mathbf
{v}}<\infty]<G(\mathbf{w},\mathbf{v})$
where $G(\cdot,\cdot)$ is the Green's function of a simple random
walk on $\mathbb{Z}^{d}$. Standard estimates for $G(\cdot,\cdot)$
(see, e.g., Theorem~1.5.4 in \cite{lawler1996intersections}) give that
$\mathbf{P}_{\mathbf{w}}[\tau_{\mathbf{v}}<\infty]<C(d)\|
\mathbf{w}-\mathbf{v}\|_{2}^{2-d}$,
and thus
\[
\sum_{\mathbf{v}\in M}\mathbf{P}_{\mathbf{w}}[
\tau_{\mathbf
{v}}<\infty]<\sum_{\mathbf{v}\in M}\|\mathbf{w}-
\mathbf{v}\|_{2}^{2-d}.
\]
For some $\hat{c}(d)<\infty$, and all $r>0$, a ball of radius $\hat{c}r$
around the origin contains at least $r^{d}$ vertices in $\mathbb{Z}^{d}$.
Since the RHS above can only be increased by moving a vertex in $M$
closer to $\mathbf{w}$, we have
\[
\sum_{\mathbf{v}\in M}\mathbf{P}_{\mathbf{w}}[
\tau_{\mathbf
{v}}<\infty]<\hat{C}\sum_{r=1}^{\hat
{c}m}r^{d-1}r^{2-d}<Cm^{2}=C|M|^{2/d}.
\]
Since $\mu_{X}=\sum_{\mathbf{w}\in M}\mathbf{P}_{\mathbf
{a}}[\tau_{\mathbf{w}}<\tau_{\partial B^7} | \tau_{\partial
B^7}=\tau_{\mathbf{z}}]$,
we are done.
\end{pf}
Let $\tau_{0}\dvtx(\mathbb{Z}^{d})^{\mathbb{Z}^{\ge0}}\to\mathbb
{Z}^{\ge0}$
be a stopping time for the random walk $S(t)$. We denote by $\tau_{0}^{t}$
the stopping\vadjust{\goodbreak} time on the $t$-time shifted sequences, that is, $\tau
_{0}^{t}(\mathbf{a}_{0},\mathbf{a}_{1},\ldots)=\tau_{0}(\mathbf
{a}_{t},\mathbf{a}_{t+1},\ldots)+t$.
We call $\tau_{0}$ a \emph{simple stopping time} if $\tau_{0}^{t}\ge
\tau_{0}$
for every $t\ge0$.
%
\begin{prop}
\label{proHitthenstop}Let $B=B(n)$, set $\mathbf{a}\in B^7$
and $\mathbf{z}\in Z(B)$. Let $\tau_{1},\tau_{2}$ be simple stopping
times (see above). Then there exists a $\mathbf{x}$ satisfying
$\mathbf
{P}_{\mathbf{a}}[S(\tau_{1})=\mathbf{x},\tau_{1}<\tau_{\partial
B^7} | \tau_{\partial B^7}=\tau_{\mathbf{z}}]>0$
such that
\[
\mathbf{P}_{\mathbf{a}}[\tau_{1},\tau_{2}\le
\tau_{\partial B^7} | \tau_{\partial B^7}=\tau_{\mathbf{z}}]\ge
\mathbf{P}_{\mathbf
{a}}[\tau_{1}<\tau_{\partial B^7} |
\tau_{\partial B^7}=\tau _{\mathbf{z}}]\mathbf{P}_{\mathbf{x}}[
\tau_{2}\le\tau _{\partial B^7} | \tau_{\partial B^7}=
\tau_{\mathbf{z}}].
\]
\end{prop}
\begin{pf}
Let
\[
\pi_{\mathbf{y}}=\mathbf{P}_{\mathbf{a}}\bigl[S(\tau_{1})=
\mathbf {y},\tau_{1}<\tau_{\partial B^7} | \tau_{\partial B^7}=\tau
_{\mathbf
{z}}\bigr].
\]
For $\mathbf{y}$ satisfying $\pi_{\mathbf{y}}>0$, we have by Bayes
%
\begin{eqnarray}\label{eqstoptime1}
&&\mathbf{P}_{\mathbf{a}}\bigl[\tau_{1},\tau_{2}\le
\tau_{\partial
B^7},S(\tau_{1})=\mathbf{y} | \tau_{\partial B^7}=
\tau_{\mathbf
{z}}\bigr]
\nonumber
\\[-8pt]
\\[-8pt]
\nonumber
&&\qquad=\frac{\pi_{\mathbf{y}}\mathbf{P}_{\mathbf{a}}[\tau
_{1},\tau_{2}\le\tau_{\partial B^7}=\tau_{\mathbf{z}} | S(\tau
_{1})=\mathbf{y},\tau_{1}<\tau_{\partial B^7}]}{\mathbf
{P}_{\mathbf{a}}[\tau_{\partial B^7}=\tau_{\mathbf{z}} | S(\tau
_{1})=\mathbf{y},\tau_{1}<\tau_{\partial B^7}]}.
\end{eqnarray}
Since $\tau_{2}$ is a simple stopping time,
\begin{eqnarray*}
&&\mathbf{P}_{\mathbf{a}}\bigl[\tau_{1},\tau_{2}\le
\tau_{\partial
B^7}=\tau_{\mathbf{z}} | S(\tau_{1})=\mathbf{y},
\tau_{1}<\tau _{\partial B^7}\bigr] \\
&&\qquad \ge \mathbf{P}_{\mathbf{a}}
\bigl[\tau_{1}\le \tau_{2}^{\tau_{1}}\le
\tau_{\partial B^7}=\tau_{\mathbf{z}} | S(\tau _{1})=\mathbf{y},
\tau_{1}<\tau_{\partial B^7}\bigr].
\end{eqnarray*}
Plugging the above into (\ref{eqstoptime1}) and using the strong
Markov property, we get
\[
\mathbf{P}_{\mathbf{a}}\bigl[\tau_{1},\tau_{2}\le
\tau_{\partial
B^7},S(\tau_{1})=\mathbf{y} | \tau_{\partial B^7}=
\tau_{\mathbf
{z}}\bigr]\ge\pi_{\mathbf{y}}\mathbf{P}_{\mathbf{y}}[
\tau_{2}\le \tau_{\partial B^7} | \tau_{\partial B^7}=
\tau_{\mathbf{z}}].
\]
Let $\mathbf{x}\in\{ \mathbf{y}\dvtx\pi_{\mathbf{y}}>0\} $
be the vertex for which $\mathbf{P}_{\mathbf{x}}[\tau_{2}\le\tau
_{\partial B^7} | \tau_{\partial B^7}=\tau_{\mathbf{z}}]$
is minimal. Summing both sides over
$\{ \mathbf{y}\dvtx\pi_{\mathbf{y}}>0\}$,
we are done.
\end{pf}
We quote the Harnack principle for $\mathbb{Z}^{d}$ from Theorem~1.7.6
in \cite{lawler1996intersections}.
%
\begin{prop}
\label{proHarnack}Let $U$ be a compact subset of $\mathbb{R}^{d}$
contained in a connected open set $V$. Then there exists a
$c=c(U,V)<\infty$
such that if $A_{n}=nU\cap\mathbb{Z}^{d}$, $D_{n}=nV\cap\mathbb{Z}^{d}$,
and $f\dvtx D_{n}\cup\partial D_{n}\to[0,\infty)$ is harmonic in $D_{n}$,
then
\[
f(x)\le cf(y), x,y\in A_{n}.
\]
\end{prop}
%
\begin{lem}
\label{lemupbndxtoz}Let $B=B(n)$ and let $F$ be the union of
all hyperplanes in $\mathbb{Z}^{d}$ that intersect $B^{6}$ and are
parallel to $Z(B)$. There is a $C>0$ such that for any $\mathbf{y}\in
Z(B)\cup A^{+}(B)$
and any $\mathbf{v}\in F\cap B^7$,
\[
\mathbf{P}_{\mathbf{v}}[\tau_{\partial B^7}=\tau_{\mathbf{y}}
]<Cn^{1-d}.
\]
\end{lem}
\begin{pf}
We prove for $\mathbf{y}\in Z(B)$. The proof $A^{+}(B)$ is the same
so we omit it. Let $H$ be the infinite hyperplane in $\mathbb{Z}^{d}$
that contains $Z(B)$, and let $H_{0}$ a parallel hyperplane,
which\vadjust{\goodbreak}
is the component of $\partial_{B^7}F$ closer to $Z(B)$. Let $h(\mathbf{y})$
be the $l_{1}$-closest vertex to $\mathbf{y}$ in $H_{0}$. By vertex
transitivity, there is a function $g(n)$ such that for any $\mathbf
{y}\in Z(B)$,
$\mathbf{P}_{h(\mathbf{y})}[\tau_{H}=\tau_{\mathbf{y}}]=g(n)$.
Observe that $\mathbf{P}_{(\cdot)}[\tau_{H}=\tau_{\mathbf{y}}]$
is a nonnegative harmonic function in the component of $\mathbb
{Z}^{d}\setminus H$
containing $H_{0}$, so by the Harnack principle for $\mathbb{Z}^{d}$
(Proposition~\ref{proHarnack}), for some $c>0$, any $\mathbf{v}\in
F\cap B^7,\mathbf{y}\in Z(B)$
satisfies
\[
c\mathbf{P}_{\mathbf{v}}[\tau_{H}=\tau_{\mathbf{y}}]>\mathbf
{P}_{h(\mathbf{y})}[\tau_{H}=\tau_{\mathbf{y}}]=g(n).
\]
Summing both sides over $\mathbf{y}\in Z(B)$, we get
\[
g(n)<Cn^{1-d}.
\]
Since $\{ \tau_{\partial B^7}=\tau_{\mathbf{y}}\} \subset
\{ \tau_{H}=\tau_{\mathbf{y}}\} $,
another application of the Harnack principle finishes the proof.
\end{pf}
%
\begin{cor}
\label{corupbndatoz}Let $B=B(n)$. There is a $C>0$ such that
for any $\mathbf{a}\in A(B),\mathbf{z}\in Z(B)$
\[
\mathbf{P}_{\mathbf{a}}[\tau_{\mathbf{z}}=\tau_{\partial B^7}
]<Cn^{-d}.
\]
\end{cor}
\begin{pf}
Using the notation of Lemma~\ref{lemupbndxtoz}, by the Markov
property $F$
\[
\mathbf{P}_{\mathbf{a}}[\tau_{\mathbf{z}}=\tau_{\partial B^7} ]=\sum
_{\mathbf{x}}\mathbf{P}_{\mathbf{a}}\bigl[
\tau_{F}<\tau _{\partial
B^7},S(\tau_{F})=\mathbf{x}
\bigr]\mathbf{P}_{\mathbf
{x}}[\tau_{\mathbf{z}}=\tau_{\partial B^7}].
\]
The right term is uniformly bounded by $Cn^{1-d}$ by Lemma~\ref
{lemupbndxtoz}.
Summing over $\mathbf{x}$, the event $\{ \tau_{F}<\tau_{\partial
B^7}\} $
implies that a one dimensional random walk starting at $1$ hits $n$
before hitting $0$, an event of probability $n^{-1}$.
\end{pf}
%
\begin{prop}
\label{proSymmgreen}Let $B=B(n)$. There is a $c(d)>0$ such that
for any $A\subset\mathbb{Z}^{d}$ and $\mathbf{v,}\mathbf{w}\in
\mathbb
{Z}^{d}\setminus A$
\[
c<\frac{\mathbf{P}_{\mathbf{v}}[\tau_{\mathbf{w}}<\tau_{A}
]}{\mathbf{P}_{\mathbf{w}}[\tau_{\mathbf{v}}<\tau_{A}]}<c^{-1}.
\]
\end{prop}
\begin{pf}
Write $G_{A}(\mathbf{v},\mathbf{w})$ for the Green's function
of a random walk killed on hitting $A$, that is, the expected number
of visits to $\mathbf{w}$ for a walk starting at $\mathbf{v}$ before
it hits $A$. By elementary Markov theory, we have symmetry of Green's
function, $G_{A}(\mathbf{v},\mathbf{w})=G_{A}(\mathbf
{w},\mathbf{v})$
and the following identity:
\[
\mathbf{P}_{\mathbf{v}}[\tau_{\mathbf{w}}<\tau_{\partial B^7}
]G_{A}(\mathbf{w},\mathbf{w})=\mathbf{P}_{\mathbf{w}} [
\tau_{\mathbf{v}}<\tau_{\partial B^7}]G_{A}(\mathbf {v},
\mathbf{v}).
\]
For any $\mathbf{v}\in\mathbb{Z}^{d}\setminus A$, $G_{A}(\mathbf
{v},\mathbf{v})\ge1$,
and is bounded above by the reciprocal of the probability a simple
random walk never returns to $\mathbf{v}$, which by transience in
$d>2$, is a finite dimensional constant.
\end{pf}
%
\begin{lem}
\label{lemupnlowbndaxorxz}Let $B=B(n)$. There is a $c>0$
such that for any $\mathbf{a}\in A(B),\mathbf{z}\in Z(B)$ and any
$\mathbf{x}\in B^{6}$,
%
\begin{equation}
cn^{1-d}<\mathbf{P}_{\mathbf{a}}[\tau_{\mathbf{x}}<
\tau_{\partial
B^7}],\qquad \mathbf{P}_{\mathbf{x}}[\tau_{\mathbf{z}}=\tau
_{\partial B^7}]<c^{-1}n^{1-d}.\label{equplowbnd}\vadjust{\goodbreak}
\end{equation}
\end{lem}
\begin{pf}
Let $D_{\mathbf{x}}(r)=\{ \mathbf{v}\in\mathbb{Z}^{d}\dvtx\|
\mathbf
{v}-\mathbf{x}\|_{2}\le r\} $.
Lemma~1.7.4 in \cite{lawler1996intersections} tells us there is a
$c_{1}(d)>0$ such that for any $\mathbf{r}\in\partial D_{\mathbf{0}}(r)$,
%
\begin{equation}
\mathbf{P}_{\mathbf{0}}[\tau_{\partial D_{\mathbf{0}}(r)}=\tau _{\mathbf{r}}]>c_{1}n^{1-d}.
\end{equation}
Fix $\mathbf{y}\in A^{+}(B)$. Then there is a $\mathbf{v}\in B^{6}$
such that $\mathbf{y}\in\partial D_{\mathbf{v}}(n),D_{\mathbf
{v}}(n)\subset B^7$.
Since $\{ \tau_{\partial D_{\mathbf{v}}(n)}=\tau_{\mathbf{y}}
\} $
implies $\{ \tau_{\partial B^7}=\tau_{\mathbf{y}}\}$, we
get that
%
\begin{equation}
\mathbf{P}_{\mathbf{v}}[\tau_{\partial B^7}=\tau_{\mathbf{y}}
]>c_{1}n^{1-d}.\label{eqlowbndxtoz}
\end{equation}
The probability to exit $B^7$ at $\mathbf{y}$ is a nonnegative
harmonic function in $B^7$. Thus, by the Harnack principle for $\mathbb{Z}^{d}$
(Proposition~\ref{proHarnack}), and since $c_{1}$ is independent
of $\mathbf{y}$, the above is true for any $\mathbf{v}\in B^{6}$
and any $\mathbf{y}\in A^{+}(B)$ with an appropriate constant $c_{2}>0$
replacing $c_{1}$.

The same argument proves the lower bound in (\ref{equplowbnd}) for
$\mathbf{P}_{\mathbf{x}}[\tau_{\mathbf{z}}=\tau_{\partial
B^7}]$.

Next, by Proposition~\ref{proSymmgreen} we have $\mathbf{P}_{\mathbf
{a}}[\tau_{\mathbf{x}}<\tau_{\partial B^7}]>c\mathbf
{P}_{\mathbf{x}}[\tau_{\mathbf{a}}<\tau_{\partial B^7}]$.
Let $\mathbf{a}^{+}$ be $\mathbf{a}$'s neighbor in $A^{+}(B)$.
Since $\{ \tau_{\mathbf{a}}<\tau_{\partial B^7}\} \supset
\{ \tau_{\mathbf{a}^{+}}=\tau_{\partial B^7}\} $
and by \eqref{eqlowbndxtoz}, we get
\[
\mathbf{P}_{\mathbf{x}}[\tau_{\mathbf{a}}<\tau_{\partial B^7} ]\ge
\mathbf{P}_{\mathbf{x}}[\tau_{\mathbf{a}^{+}}=\tau_{\partial
B^7}]>cn^{1-d},
\]
which proves the lower bound in (\ref{equplowbnd}) for $\mathbf
{P}_{\mathbf{a}}[\tau_{\mathbf{x}}<\tau_{\partial B^7}]$
as well.

The upper bound for $\mathbf{P}_{\mathbf{x}}[\tau_{\mathbf{z}}=\tau
_{\partial B^7}]$
is immediate from Lemma~\ref{lemupbndxtoz}. To prove for $\mathbf
{P}_{\mathbf{a}}[\tau_{\mathbf{x}}<\tau_{\partial B^7}]$,
we first use the lemma to get
\[
\mathbf{P}_{\mathbf{x}}[\tau_{\mathbf{a}^{+}}=\tau_{\partial
B^7}]<Cn^{1-d},
\]
which implies the bound for $\mathbf{P}_{\mathbf{x}}[\tau_{\mathbf
{a}}<\tau_{\partial B^7}]$,
since by the Markov property, the probability for exiting $B^7$
one step after hitting $\mathbf{a}$ for the first time is
\[
\mathbf{P}_{\mathbf{x}}[\tau_{\mathbf{a}}<\tau_{\partial B^7} ]\cdot
\frac{1}{2d}\le\mathbf{P}_{\mathbf{x}}[\tau_{\mathbf
{a}^{+}}=
\tau_{\partial B^7}].
\]
Using Proposition~\ref{proSymmgreen} again, we get the bound with
a new factor for $\mathbf{P}_{\mathbf{a}}[\tau_{\mathbf{x}}<\tau
_{\partial B^7}]$.
\end{pf}
%
\begin{cor}
\label{corlowbndPaxzorxyz}Let $B=B(n)$. There is a $c>0$
such that for any $\mathbf{a}\in A(B)\cup B^{6},\mathbf{z}\in Z(B)$
and any $\mathbf{x}\in B^{6}$,
%
\begin{equation}
\mathbf{P}_{\mathbf{a}}[\tau_{\mathbf{x}}<\tau_{\partial B^7} |
\tau_{\partial B^7}=\tau_{\mathbf{z}}]>cn^{2-d}.
\end{equation}
\end{cor}
\begin{pf}
By the Markov property,
\begin{eqnarray*}
\mathbf{P}_{\mathbf{a}}[\tau_{\mathbf{x}}<\tau_{\partial B^7} |
\tau_{\partial B^7}=\tau_{\mathbf{z}}]\mathbf{P}_{\mathbf{a}} [
\tau_{\partial B^7}=\tau_{\mathbf{z}}] & = & \mathbf{P}_{\mathbf
{a}}[
\tau_{\partial B^7}=\tau_{\mathbf{z}} | \tau_{\mathbf
{x}}<
\tau_{\partial B^7}]\mathbf{P}_{\mathbf{a}}[\tau _{\mathbf{x}}<
\tau_{\partial B^7}]
\\
& = & \mathbf{P}_{\mathbf{x}}[\tau_{\partial B^7}=\tau_{\mathbf
{z}}]
\mathbf{P}_{\mathbf{a}}[\tau_{\mathbf{x}}<\tau _{\partial B^7}].
\end{eqnarray*}
If $\mathbf{a}\in A(B)$, then Lemma~\ref{lemupnlowbndaxorxz}
and Corollary~\ref{corupbndatoz} give the bound. If $\mathbf{a}\in B^{6}$,
then Lemma~\ref{lemupnlowbndaxorxz} gives us the LHS is greater
than $c\mathbf{P}_{\mathbf{a}}[\tau_{\mathbf{x}}<\tau_{\partial
B^7}]$.

For $r>0$, let $\mathfrak{b}^{r}=\{ \mathbf{x}\in\mathbb
{R}^{d}\dvtx\forall i,1\le i\le d,|x_{i}|<r/2\} $
and for $\mathbf{y}\in\mathbb{R}^{d}$ let $\mathfrak{d}(\mathbf
{y},r)=\{ \mathbf{x}\in\mathbb{R}^{d}\dvtx\|\mathbf{x}-\mathbf
{y}\|
_{2}<r\} $.
Choose $K(d)$ points $\mathbf{y}_{1},\ldots,\mathbf{y}_{K}\in
\mathfrak{b}^{6}$
such that $\mathfrak{b}^{6}\subset\bigcup_{i=1}^{K}\mathfrak
{d}(\mathbf
{y}_{i},0.1)\subset\mathfrak{b}^{6.1}$.
Let $D_{i}^{\alpha}(n)=\mathfrak{d}(n\mathbf{y}_{i},\alpha n)\cap
\mathbb
{Z}^{d}$.
Then for $\alpha\ge0.1$ and all $n$
\[
B^{6}\subset\bigcup_{i=1}^{K}D_{i}^{\alpha}
\subset B^{6+\alpha}.
\]
Let $p_{\mathbf{a}}(\mathbf{x})=\mathbf{P}_{\mathbf{a}}[\tau
_{\mathbf{x}}<\tau_{\partial B^7}]$.
To show $p_{\mathbf{a}}(\mathbf{x})>cn^{2-d}$ uniformly in $\mathbf
{x},\mathbf{a}\in B^{6}$,
it is enough to show, w.l.o.g., that there is a $c_{1}>0$ such that for
any $\mathbf{x}\in D_{1}^{0.1},\mathbf{a}\in B^{6}$, $p_{\mathbf
{a}}(\mathbf{x})>c_{1}n^{2-d}$.
Since $p_{\mathbf{a}}(\mathbf{x})$ is harmonic as a function of
$\mathbf{a}$
in $B^{7}\setminus\{ \mathbf{x}\} $, by the maximum (minimum)
principle,
\[
\min_{\mathbf{a}\in D_{1}^{0.2}\setminus\{ \mathbf{x}\}
}p_{\mathbf{a}}(\mathbf{x})\ge\min
_{\mathbf{a}\in\partial
D_{1}^{0.2}\cup\{ \mathbf{x}\} }p_{\mathbf{a}}(\mathbf{x}).
\]
Since $p_{\mathbf{x}}(\mathbf{x})=1$, and $\partial
D_{1}^{0.2}\subset B^{6.5}$,
it is thus enough to lower bound $p_{\mathbf{a}}(\mathbf{x})$ for
$\mathbf{a}\in B^{6.5}\setminus D_{1}^{0.2}$. Since $p_{\mathbf
{a}}(\mathbf{x})$
is harmonic and positive in $B^{7}\setminus D_{1}^{0.1}$, by the
Harnack principle for $\mathbb{Z}^{d}$ (Proposition~\ref{proHarnack}),
there is a $c_{2}(d)>0$ such that for any $\mathbf{a},\mathbf{b}\in
B^{6.5}\setminus D_{1}^{0.2}$
\[
p_{\mathbf{b}}(\mathbf{x})\ge c_{2}p_{\mathbf{a}}(\mathbf{x}).
\]
Thus, it is enough to bound for some fixed $\mathbf{a}\in\partial
D_{1}^{0.2}\cap B^{6}$.
Let $D_{*}=\mathfrak{d}(\mathbf{a},0.6n)\cap\mathbb{Z}^{d}$ and note
that $\mathbf{x}\in D_{*}\subset B^{7}$, implying $p_{\mathbf
{a}}(\mathbf{x})\ge\mathbf{P}_{\mathbf{a}}[\tau_{\mathbf{x}}<\tau
_{\partial D_{*}}]$.
By Proposition~1.5.9 in \cite{lawler1996intersections}, since $\mathbf
{x}\in\mathfrak{d}(\mathbf{a},0.4n)\cap\mathbb{Z}^{d}$,
$\mathbf{P}_{\mathbf{x}}[\tau_{\mathbf{a}}<\tau_{\partial
D_{*}}]\ge cn^{2-d}$,
and by Proposition~\ref{proSymmgreen} we are done.
\end{pf}


\section{Distance bound}\label{secDistance-bound}\label{appB}

In this section, we prove the following theorem.
%
\begin{thmm}
\label{thmmDistancebound}Let $\omega_{0}\subset\mathcal{T}(N)$. If
$\omega_0\subset\mathcal{T}(N)$ is $(N,k,\rho)$-good (see Section~\ref
{subk-goodtorus})
where $k\ge1,\rho>0$, then there is a $C(k,\rho)<\infty$ such that
for all large $N$ and any two vertices $\mathbf{x},\mathbf{y}\in
\omega_{0}$
\[
d_{\omega_{0}}(\mathbf{x},\mathbf{y})<Cd_{\mathcal{T}}(\mathbf {x},
\mathbf{y})\log^{(k-1)}N+C(\log N)^{4d+2},
\]
where $\log^{(m)}N$ is $\log(\cdot)$ iterated $m$ times of $N$.
\end{thmm}
We start by reducing from the torus to top-level.boxes. To prove the
theorem, it is enough to show that there exists a $C(k,\rho)<\infty$
such that for all large $n$, any $\omega\in\Good_{k}^{\rho}(n)$
and any $\mathbf{x},\mathbf{y}\in\omega\cap b^5(n)$ satisfy
%
\begin{equation}
d_{\omega}(\mathbf{x},\mathbf{y})<Cd_{b^7(n)}(\mathbf{x},\mathbf
{y})\log ^{(k-1)}n+C(\log n)^{4d+2}.\label{eqdistbnd2}
\end{equation}
Note that while $\omega_{0}$ is a subgraph of $\mathcal{T}$ as far
as graph distance, we require \eqref{eqdistbnd2} to hold for $\omega$
as a subgraph of $\mathbb{Z}^{d}$ (no wrap around).\vadjust{\goodbreak} To see why this
is enough, let $\mathbf{x},\mathbf{y}\in\mathcal{T}(N)$ and set
$n=\lceil N/10\rceil$.
First assume there is a top-level box $b_{*}(\mathbf{a},n)$ and $\hat
{\mathbf{x}},\hat{\mathbf{y}}\in b_{*}^{3}$
such that $\mathbf{x}=\proj(\hat{\mathbf{x}}),\mathbf{y}=\proj
(\hat
{\mathbf{y}})$.
Let $\omega=\proj^{-1}(\omega_{0})\cap b_{*}^{7}$. Note
$d_{\omega}(\hat{\mathbf{x}},\hat{\mathbf{y}})\ge d_{\omega
_{0}}(\mathbf
{x},\mathbf{y})$
but since $\|b_{*}^{3}\|<N/2$,%
%
$d_{b_{*}^7}(\hat{\mathbf{x}},\hat{\mathbf{y}})=d_{\mathcal
{T}}(\mathbf{x},\mathbf{y})$.
By \eqref{eqdistbnd2}, since $b_{*}$ is $(\proj^{-1}(\omega
_{0}),k,\rho)$-good
by definition, we are done. If no such $b_{*}$ exists, then by our
construction of top-level boxes, $d_{\mathcal{T}}(\mathbf{x},\mathbf{y})>n$.
Let $b_{\mathbf{x}},b_{\mathbf{y}}$ be the top-level boxes such that
$\mathbf{x}\in b_{\mathbf{x}},\mathbf{y}\in b_{\mathbf{y}}$. We can
make a $\Delta$-connected path of top-level boxes from $\mathbf{x}$
to $\mathbf{y}$ of length at most $10d$. Since $b_{1},b_{2}$ that
are $\Delta$-neighbors satisfy that $b_{1}\subset b_{2}^5$, by
Remark~\ref{rem0goodconnected}, \eqref{eqdistbnd2} implies the
theorem.

To simplify notation, we fix $k,\rho,n$ and $\omega\in\Good
_{k}^{\rho}(n)$
for the remainder of the section. We write $\Good_{i}$ (resp., $i$-good)
for $\Good_{i}^{\rho\Dns^{k-i}}$ [resp., $(\omega,i,\rho\Dns
^{k-i})$-good].

We now utilize the recursive goodness properties of $\omega$ to extract
a single connected cluster of $\omega$ which is a power of log $\omega
$-distance
from its complement in $\omega$ and is ``nicely'' embedded in
$\mathbb{Z}^{d}$. Given an $(i+1)$-good box $B$ where $0\le i<k$,
we write
\[
\mathcal{S}(B,i)=\bigl\{ b\in\sigma(B)\dvtx b\mbox{ is }i\mbox{-good} \bigr\},
\]
and let $\sigma_{B}=\|\Delta(\sigma(B))\|=|\sigma(B)|^{{1}/{d}}$.
Since $B$ is $(i+1)$-good, by definition we have that $\Delta\mathcal
{S}(B,i)\in\mathcal{P}(\sigma_{B})$.
Thus, there exists a good cluster $\mathcal{C}(\Delta\mathcal{(S}(B,i)))$
satisfying Percolation properties 1, 2, 3
(see Section~\ref{subperc}). Let $\Bx C_{i}(B)\subset\sigma(B)$ be the
set for which $(\Delta\Bx C_{i}(B))=\mathcal{C}(\Delta(\mathcal{S}(B,i)))$.
For $i=0,\ldots,k$ let us define $\beta_{i}=\beta_{i}(\omega,n)$.
Set $\beta_{k}=\{ B(n)\} $ and for $i=k-1,\ldots,0$ recursively
define $\beta_{i}=\{ b\in\Bx C_{i}(B)\dvtx B\in\beta_{i+1}\} $.
See Figure~\ref{figbetak} for a schematic illustration.

Let $n_{j}=s^{(k-j)}(n)$. Thus, for $b\in\beta_{j}$ we have $\|b\|=n_{j}$
and also $|\sigma(b)|^{{1}/{d}}<6n_{j}/n_{j-1}$ for
all large $n$. Note that by Percolation property~1,
$\{ \beta_{j}(n)\} _{j=0}^{k}$ are nonempty for all large
$n$. Roughly, $\bigcup\beta_{0}$ is the nicely embedded cluster
referred to above. Its precise properties follow.%
%
\begin{figure}

\includegraphics{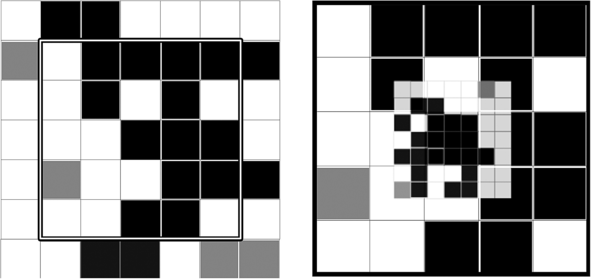}

\caption{On the left is a schematic example
of $B^7(n)$
where the black boxes represent $\beta_{k}=\Bx C_{k-1}(B(n))$ and
the gray ones $\mathcal{S}(B,k-1)\setminus\beta_{k}$. On the right
is a blowup of the framed region on the left where the small black
boxes are a part of $\beta_{k-1}$.}\label{figbetak}
\end{figure}

Given an $(i+1)$-good box $B$, let $\Bx C_{i}^5(B)=\{ b\in\Bx
C_{i}(B)\dvtx b\cap B^5\neq\varnothing\} $.
%
\begin{lem}
\label{lemptinbeta0}Set $b_{k}=B(n)$. There is a $C(k)$ such
that for any $\mathbf{x}_{k}\in b_{k}^5\cap\omega$, there are
boxes $\{ b_{i}\} _{i=0}^{k-1}$ satisfying: \textup{(i)} $b_{i}\in\Bx
C_{i}^5(b_{i+1})\subset\beta_{i}$,
and \textup{(ii)} there is a $\mathbf{x}_{0}\in\omega\cap b_{0}$ such that
\[
d_{\omega}(\mathbf{x}_{k},\mathbf{x}_{0})<C(k) (
\log n)^{4d+2}.
\]
\end{lem}
\begin{pf}
We use backward induction. For $1\le j\le k$, we prove that if $\mathbf
{x}_{j}\in B_{j}^5\cap\omega$
where $B_{j}\in\beta_{j}$, then there is a $b_{j-1}\in\Bx
C_{j-1}^5(B_{j})\subset\beta_{j-1}$
and a $\mathbf{x}_{j-1}\in b_{j-1}$ satisfying
%
\begin{equation}
d_{\omega}(\mathbf{x}_{j},\mathbf{x}_{j-1}
)<c(d)n_{j-1}^{d}(\log n_{j})^{2}.\label{eqxjtoxj-1}
\end{equation}
Since the conditions of the lemma provide us with an initial $\mathbf
{x}_{k}\in B^5(n)$
where by definition $B(n)\in\beta_{k}$, the bound on $d_{\omega
}(\mathbf
{x}_{k},\mathbf{x}_{0})$
is proved by connecting $\mathbf{x}_{k},\mathbf{x}_{k-1},\ldots,\mathbf{x}_{0}$.

We assumed $B_{j}\in\beta_{j}$, so in particular, $B_{j}$ is $j$-good.
Let $b_{*}\in\sigma(B_{j})$ be the subbox of $B_{j}$ containing
$\mathbf{x}_{j}$ and assume $b_{*}\notin\Bx C_{j-1}^5(B_{j})$
as otherwise we are done. Consider
\[
\Bx B=\bigl\{ b\in B_{\Delta}\bigl(b_{*},\log\bigl|
\sigma(B_{j}) \bigr|\bigr)\dvtx b\cap B_{j}^5\neq
\varnothing\bigr\}.
\]
Since $b_{*}\cap B_{j}^5\neq\varnothing$ by assumption, $|\Bx B
|>\log^{2}|\sigma(B_{j})|$,
and thus by Percolation property~2
(see Section~\ref{subperc}), there is a $b_{j-1}\in\Bx B\cap\Bx C_{j-1}^5(B_{j})$.
Thus, there is a $\Delta$-path $\Bx p\subset\sigma(B_{j})$ of length
at most $d\log|\sigma(B_{j})|$ starting at $b_{*}$ and
ending at $b_{j-1}$. By Remark~\ref{rem0goodconnected} on $\Good
_{0}^{\rho}$
(see Section~\ref{subk-good}), for any $\Delta$-neighboring boxes $b_{\alpha
},b_{\beta}$
in the path, $\omega\cap b_{\alpha}$ is connected to $\omega\cap
b_{\beta}$
in $\omega\cap b_{\alpha}^{+}$. Choosing some $\mathbf{x}_{j-1}\in
\omega
\cap b_{j-1}$
and using the volume of $\bigcup_{b\in\Bx p}b^7$ as a trivial distance
bound, we get (\ref{eqxjtoxj-1}).
\end{pf}
For $0\le j<k$, note that although $\|b_{1}\|=\|b_{2}\|$ for any
$b_{1},b_{2}\in\beta_{j}$, since they can be subboxes of different
$B_{1},B_{2}$, $b_{1}$ is not in general an element of $\operatorname{sp}
\{ b_{2}\} $.
Thus, for each $0\le j\le k$, we add a graph structure to $\beta_{j}$
by defining a neighbor relation ($\stackrel{5}{\sim}$) between boxes
$b_{1},b_{2}\in\beta_{j}$. We define that $b_{1}\stackrel{5}{\sim}b_{2}$
if and only if $b_{1}\subset b_{2}^5$ and $b_{2}\subset b_{1}^5$.
Note this relation is reflexive, and that for $(j+1)$-good $B$ and
$b_{1},b_{2}\in\Bx C_{j}(B)$, $d_{\beta_{j}}(b_{1},b_{2})\le
d_{\Delta
}(b_{1},b_{2})$.
For the remainder of the section, any graph properties of $\beta_{j}$
referred to, such as connectivity or distance, use the graph structure
created by $\stackrel{5}{\sim}$.
%
\begin{lem}
\label{lembetajconpartial}There is a $C_{d}$ such that for each
$0<j\le k$, if $B_{1},B_{2}\in\beta_{j}$ are $\stackrel{5}{\sim
}$-connected, and
we have $b_{1}\in\Bx C_{j-1}^5(B_{1}),b_{2}\in\Bx C_{j-1}^5(B_{2})$,
then\break  $d_{\beta_{j-1}}(b_{1},b_{2})<C_{d}(d_{\beta
_{j}}(B_{1},B_{2})\vee
1)n_{j}/n_{j-1}$.
In particular, $b_{1},b_{2}$ are $\stackrel{5}{\sim}$-con\-nected in
$\beta_{j-1}$.
\end{lem}
\begin{pf}
We prove the lemma for the special case of $B_{1}\stackrel{5}{\sim}B_{2}$
[i.e., $d_{\beta_{j}}(B_{1},\break  B_{2})\le1$]. The general lemma follows
by applying the neighbor case over a path in $\beta_{j}$ realizing
the $\stackrel{5}{\sim}$-distance between two fixed boxes. By
definition, $\Bx C_{j-1}(B_{1})$
and $\Bx C_{j-1}(B_{2})$ are each $\Delta$-connected sets, and thus
$\stackrel{5}{\sim}$-connected. By Percolation property~3,
for $i=1,2$ and any $b,b'\in\Bx C_{j-1}^5(B_{i})$, $d_{\Bx
C_{j-1}(B_{i})}(b,b')<C_{d}n_{j}/n_{j-1}$.
Since $\Bx C_{j-1}(B_{i})\subset\beta_{j-1}$ and $\stackrel{5}{\sim
}$-distance is
at most $\Delta$-distance, to complete the proof it is enough to show
existence of $\hat{b}_{1}\in\Bx C_{j-1}^5(B_{1})$ and $\hat
{b}_{2}\in
\Bx C_{j-1}^5(B_{2})$
such that $\hat{b}_{1}\subset\hat{b}_{2}^5$ and $\hat{b}_{2}\subset
\hat
{b}_{1}^5$.
For $i=1,2$, let $\Bx D_{i}=\{ b\in\sigma(B_{i})\dvtx b\subset
B_{2}\} $
and let $\Bx E_{i}=\Bx D_{i}\cap\Bx C_{j-1}(B_{i})$. Let $D_{i}=\bigcup
\Bx D_{i}$
and let $E_{i}=\bigcup\Bx E_{i}$. Since $E_{i}\subset D_{i}$, we
have $B_{2}\setminus E_{i}=(B_{2}\setminus D_{i})\cup
(D_{i}\setminus E_{i})$.
By a volume bound, $|B_{2}\setminus D_{i}|\le2dn_{j-1}n_{j}^{d-1}$
and by Percolation property~1, $|\Bx
D_{i}\setminus\Bx E_{i}|<10^{-d}|\sigma(B_{i})|$.
Since $|\sigma(B_{i})|^{{1}/{d}}<6n_{j}/n_{j-1}$,
this implies $|D_{i}\setminus E_{i}|<(0.6n_{j})^{d}$.
As $|B_{2}|=n_{j}^{d}$ and $d>2$, we have by the bound
on $|B_{2}\setminus E_{i}|$ for $i=1,2$ that there is
a $\mathbf{x}\in E_{1}\cap E_{2}$. The containing boxes $\mathbf
{x}\in
\hat{b}_{i}\in\Bx C_{j-1}(B_{i})$
for $i=1,2$ are thus $\stackrel{5}{\sim}$-neighbors.
\end{pf}
We now prove the theorem by showing there exists a $C(k,\rho)<\infty$
such that for any $\mathbf{x},\mathbf{y}\in\omega\cap B^5(n)$,
\eqref{eqdistbnd2} holds for all large $n$.
\begin{pf*}{Proof of Theorem~\ref{thmmDistancebound}}
We demonstrate there is a path from $\mathbf{x}$ to $\mathbf{y}$
in $\omega$ shorter than the RHS of \eqref{eqdistbnd2}.
Let $b_{\mathbf{x},k}=B(n)$ and apply Lemma~\ref{lemptinbeta0}
to $\mathbf{x}$ to get boxes $\{ b_{\mathbf{x},i}\} _{i=0}^{k-1}$
satisfying: (i) $b_{\mathbf{x},i}\in\Bx C_{i}^5(b_{\mathbf
{x},i+1})\subset\beta_{i}$,
and (ii) there is a $\mathbf{x}_{0}\in\omega\cap b_{\mathbf{x},0}$
such that $d_{\omega}(\mathbf{x},\mathbf{x}_{0})<C(k)(\log n
)^{4d+2}$.
Observe that (i) implies $\mathbf{x}_{0}\in b_{\mathbf{x},k-1}^{6}$
for all large $n$. Set $b_{\mathbf{y},k}=B(n)$ and apply the lemma
to $\mathbf{y}$ as well to get $b_{\mathbf{y},i}$ and $\mathbf{y}_{0}$
with analogous properties.

By Lemma~\ref{lembetajconpartial}, $\beta_{k-1}$ is $\stackrel
{5}{\sim}$-connected,
and more specifically,
\[
d_{\beta_{k-2}}(b_{\mathbf{x},k-2},b_{\mathbf
{y},k-2})<C_{d}
\bigl(d_{\beta
_{k-1}}(b_{\mathbf{x},k-1},b_{\mathbf{y},k-1})\vee1\bigr)
\frac{n_{k-1}}{n_{k-2}}.
\]
Iterating the lemma, we get
%
\begin{equation}
d_{\beta_{0}}(b_{\mathbf{x},0},b_{\mathbf
{y},0})<C_{d}^{k-1}
\bigl(d_{\beta
_{k-1}}(b_{\mathbf{x},k-1},b_{\mathbf{y},k-1})\vee1\bigr)
\frac
{n_{k-1}}{n_{0}}.\label{eqdbeta0I}
\end{equation}
Since $b_{\mathbf{x},k-1},b_{\mathbf{y},k-1}\in\Bx C_{k-1}^5(B(n))$,
by Percolation property~3
%
\begin{equation}\quad
d_{\Bx C_{k-1}(B(n))}(b_{\mathbf{x},k-1},b_{\mathbf{y},k-1} )<c_{a}d_{\sigma(B(n))}(b_{\mathbf{x},k-1},b_{\mathbf
{y},k-1})
\vee c_{a}\log\frac{n_{k}}{n_{k-1}}.\label{eqdCk-1II}
\end{equation}
Where both are defined, $\stackrel{5}{\sim}$-distance is at most
$\Delta
$-distance,
and thus we may replace $d_{\Bx C_{k-1}(B(n))}(\cdot,\cdot)$ in
\eqref
{eqdCk-1II}
by $d_{\beta_{k-1}}(\cdot,\cdot)$. Since $n_{k-1}\cdot d_{\sigma
(B(n))}(\cdot,\cdot)$
and $d_{B^7(n)}(\cdot,\cdot)$ are comparable, and using that $\mathbf
{x}_{0}\in b_{\mathbf{x},k-1}^{6},\mathbf{y}_{0}\in b_{\mathbf{y},k-1}^{6}$
we have
\[
n_{k-1}d_{\beta_{k-1}}(b_{\mathbf{x},k-1},b_{\mathbf
{y},k-1})<c_{a}^{\prime}
\bigl(d_{B^7(n)}(\mathbf{x}_{0},\mathbf {y}_{0})\vee
n_{k-1}\log n_{k}\bigr).\vadjust{\goodbreak}
\]
Plugging this into \eqref{eqdbeta0I}, we get
\[
d_{\beta_{0}}(b_{\mathbf{x},0},b_{\mathbf{y},0})<C(k)
\bigl(d_{B^7(n)}(\mathbf{x}_{0},\mathbf{y}_{0})\vee
\log^{5}n\bigr)/n_{0}.
\]
By properties of $\Good_{0}^{\rho}$ (see Section~\ref{subk-good}), vertices
in $\stackrel{5}{\sim}$-neighboring boxes in $\beta_{0}$ are connected
in $\omega$
in a path which is at most twice the volume of one box, and thus we
get
\[
d_{\omega}(\mathbf{x}_{0},\mathbf{y}_{0})<C(k)
\bigl(d_{B^7(n)}(\mathbf{x}_{0},\mathbf{y}_{0})\vee
\log^{5}n \bigr) (n_{0})^{d-1}.
\]
We pay a $C(\log n)^{4d+2}$ term to connect $\mathbf
{x},\mathbf{y}$
to $\mathbf{x}_{0},\mathbf{y}_{0}$, respectively. This terms also
absorbs the $(n_{0})^{d-1}\log^{5}n$ factor above. Since
$(n_{0})^{d-1}$ is\break  $o(\log^{(k-1)}n)$, we are
done.
\end{pf*}

\section{Random interlacements notation}\label
{apri}\label{appC}
We try to follow as much as possible the canonical notation of
Alain-Sol Sznitman \cite{MR2680403}.
Let $W$ and $W_+$ be the spaces of doubly infinite and infinite
trajectories in $\Z^d$
that spend only a finite amount of time in finite subsets of $\Z^d$:
\begin{eqnarray*}
W&=&\Bigl\{\gamma\dvtx\Z\to\Z^d; \bigl|\gamma(n)-\gamma(n+1)\bigr|=1, \forall n
\in \Z; \lim_{n\to\pm\infty}\bigl|\gamma(n)\bigr|=\infty\Bigr\},
\\
W_+&=&\Bigl\{\gamma\dvtx{\mathbb N}\to\Z^d; \bigl|\gamma(n)-\gamma(n+1)\bigr|=1,
\forall n\in \Z; \lim_{n\to\infty}\bigl|\gamma(n)\bigr|=\infty\Bigr\}.
\end{eqnarray*}
The canonical
coordinates on ${ W}$ and ${ W}_+$ will be denoted
by $X_n$, \mbox{$n\in{\mathbb Z}$} and $X_n$, $n\in{\mathbb N}$,
respectively. Here, we use the convention that ${\mathbb N}$ includes
$0$. We endow $W$ and $W_+$ with the sigma-algebras
${\mathcal W}$ and ${\mathcal W}_+$, respectively, which are
generated by the canonical coordinates. For $\gamma\in W$, let range
$(\gamma)=\gamma({\mathbb Z})$. Furthermore, consider the
space $W^*$ of trajectories in $W$ modulo time shift:
\[
W^*=W/\sim\qquad \mbox{where }w\sim w'\Longleftrightarrow w(
\cdot)=w'(\cdot+k)\mbox{ for some }k\in\Z.
\]
Let $\pi^*$ be the
canonical projection from $W$ to $W^*$, and let ${\mathcal W}^*$
be the sigma-algebra on $W^*$ given by $\{A\subset
W^* \dvtx(\pi^*)^{-1}(A)\in{\mathcal W}\}$. Given
$K\subset{\mathbb Z}^d$ and $\gamma\in W_+$, let
$\tilde{H}_K(\gamma)$ denote the hitting time of $K$ by $\gamma$:
%
\begin{equation}
\label{hitting} \tilde{H}_K(\gamma)=\inf\bigl\{n\ge1 \dvtx
X_n(\gamma)\in K\bigr\}.
\end{equation}
For $x\in{\mathbb Z}^d$, let $P_x$ be the law on $(W_+,{\mathcal
W}_+)$ corresponding to simple random walk started at $x$, and for
$K\subset\BZ^d$, let $P_x^K$ be the law of simple random walk,
conditioned on not hitting $K$. Define the equilibrium measure of
$K$:
%
\begin{eqnarray}
e_K(x)=\cases{
P_x[
\tilde{H}_K=\infty], &\quad $x\in K,$
\vspace*{2pt}\cr
0, &\quad $x\notin K.$}
\end{eqnarray}
Define the capacity of
a set $K\subset\BZ^d$ as
%
\begin{equation}
\operatorname{cap}(K)=\sum_{x\in\BZ^d} e_K(x).\vadjust{\goodbreak}
\end{equation}

Next, we
define a Poisson point process on $W^*\times{\mathbb R}_+$. The
intensity measure of the Poisson point process is given by the
product of a certain measure $\nu$ and the Lebesque measure on
${\mathbb R}_+$. The measure $\nu$ was constructed by Sznitman in
\cite{MR2680403}, and now we characterize it. For
$K\subset{\mathbb Z}^d$, let $W_K$ denote the set of trajectories
in $W$ that enter $K$. Let $W_K^{*}=\pi^*(W_K)$ be the set of
trajectories in $W^*$ that intersect $K$. Define $Q_K$ to
be the finite measure on $W_K$ such that for $A,B\in{\mathcal
W}_+$ and $x\in{\mathbb Z}^d$,
%
\begin{equation}
\label{eQdef}Q_K\bigl[(X_{-n})_{n\ge0}\in
A,X_0=x,(X_n)_{n\ge0}\in B\bigr]=P_x^K[A]e_K(x)P_x[B].
\end{equation}
The measure $\nu$ is the unique $\sigma$-finite measure such that
%
\begin{equation}
\label{enudef} \ind_{W^*_K}\nu=\pi^*\circ Q_K\qquad
\forall K\subset{\mathbb Z}^d\mbox{ finite}.
\end{equation}
The existence and uniqueness of the measure was proved in Theorem~1.1
of \cite{MR2680403}. Consider the set of point measures
in $W^{*}\times{\mathbb R}_+$:
%
\begin{eqnarray}
\Omega&=&\Biggl\{\omega=\sum_{i=1}^{\infty}
\delta_{(w_i^*,u_i)}; w_i^*\in W^*, u_i\in{\mathbb
R}_+,
\nonumber
\\[-8pt]
\\[-8pt]
\nonumber
&&\hspace*{8pt}\omega\bigl(W_K^*\times[0,u]\bigr)<\infty,\mbox{ for every finite
}K\subset \Z ^d\mbox{ and }u\in{\mathbb R}_+\Biggr\}.
\end{eqnarray}

Also consider the space of point measures on $W^*$:
%
\begin{equation}\quad
\tilde{\Omega}=\Biggl\{\sigma=\sum_{i=1}^{\infty}
\delta_{w_i^*}; w_i^*\in W^*, \sigma\bigl(W_K^*
\bigr)<\infty, \mbox{ for every finite }K\subset\Z^d\Biggr\}.
\end{equation}
For $u>u'\ge0$, we define the mapping $\omega_{u',u}$ from $\Omega$ into
$\tilde{\Omega}$ by
%
\begin{equation}
\label{eomegaudef}\omega_{u',u}=\sum_{i=1}^\infty
\delta_{w_i^*}{\ind}\bigl\{u'\le u_i\le u
\bigr\}\qquad\mbox{for }\omega=\sum_{i=1}^{\infty}
\delta_{(w_i^*,u_i)}\in\Omega.
\end{equation}
If $u'=0$, we write
$\omega_u$. On $\Omega$ we let
${\mathbb P}$ be the law of a Poisson point process with intensity
measure given by $\nu(dw^*)\,dx$. Observe that under ${\mathbb P}$,
the point process $\omega_{u,u'}$ is a Poisson point process on
$\tilde{\Omega}$ with intensity measure $(u-u') \nu(dw^*)$. Given
$\sigma\in\tilde{\Omega}$, we define
%
\begin{equation}
\label{enicenotation} {\mathcal I}(\sigma)=\bigcup_{w^*\in\operatorname{supp}(\sigma)}
\operatorname{range}\bigl(w^*\bigr).
\end{equation}
For $0\le u'\le u$, we define
%
\begin{equation}
\label{eridef} {\mathcal I}^{u',u}={\mathcal I}(\omega_{u',u}),
\end{equation}
which we call the \emph{random interlacement set} between levels
$u'$ and $u$. In case $u'=0$, we write ${\mathcal I}^u$.

Finally, we can define the measure of the random walk described in
Theorem~\ref{thmmInterlacementmain}. Let $\pr^u_0[\cdot]=\pr[\cdot
|0\in
\CI^u]$. For every $\CI^u$ distributed according to $\pr_0^u$, let
$\mathbf{P}_0^{u}$ be the law of a SRW on $\CI^u$ starting from $0$.


\section{Index of symbols by order of appearance}\label{secinde}
\vspace*{10pt}
\def\arraystretch{1.1}

\centering{\begin{tabular*}{\textwidth}{@{\extracolsep{\fill}}llp{256pt}@{}}
\hline
\multicolumn{1}{@{}l}{\textbf{Symbol}}&\multicolumn{1}{c}{\textbf{Page}}&\multicolumn{1}{c}{\textbf{Definition}}\\
\hline
$\CT(N,d)$&\pageref{pg1} & $d$-dimensional torus. \\
$\proj$&\pageref{pg2} & For $x\in\Z^d$, $\CO_N(x)=(x_1\operatorname{ mod }N,\ldots,x_d\operatorname
{ mod }N)$.\\
$\CR(t)$& \pageref{pg3}& The range of SRW on the torus.\\
$\partial$&\pageref{pg4}&Outer vertex boundary. \\
$\partial^{\mathrm{in}}$&\pageref{pg5}&Inner vertex boundary.\\
$B(\mathbf{x},n)$ &\pageref{pg6} &$\{\mathbf{y}\in\Z^d\dvtx\forall i,1\le i\le
d,-n/2\le{\mathbf x}(i)-\mathbf{y}(i)<n/2\}$.\\
$\operatorname{sp}\{B(\mathbf{x},n)\}$& \pageref{pg7}& $\{B(\mathbf{x}+\sum_i\mathbf
{e}_ik_in,n)\dvtx(k_1,\ldots,k_d)\in\Z^d\}$.\\
$\Delta$& \pageref{pg8}& The isomorphism, $\Delta$: $\operatorname{sp}\{B\}\rightarrow\Z^d$. \\
$B^\alpha$& \pageref{pg9}& For a box $B=B(x,n)$, $B^\alpha=B(x,\alpha n)$.\\
$s(n)$ & \pageref{pg10}& $\lceil\log n\rceil^4$.\\
$s^{(i)}(n)$& \pageref{pg11} & $s(\cdot)$ iterated $i$ times.\\
$\sigma(B(\mathbf{x},n))$&\pageref{pg12}& $\operatorname{sp}\{b(\mathbf{x},s(n))\}
\cap\{b({\mathbf y},s(n))\dvtx\mathbf{y}\in B(x,5n+3\lceil\log n\rceil^6)\}$.\\
$\CP(n)$ & \pageref{pg13}& Percolation configurations.\\
$\CG_k^\rho(n)$& \pageref{pg14}& $k$-good configurations.\\
$\hat{\phi}(r)$&\pageref{pg15}& $\inf\{\Phi_S\dvtx N^{1/3}<\pi(S)\le r\wedge
(1-1/4d)|\om
_0|\}$.\\
$\Phi_S$&\pageref{pg16}&$\frac{Q(S,S^c)}{\pi(S)}$.\\
$\Phi(u)$ &\pageref{pg17} &$\inf\{\Phi_S\dvtx0<\pi(S)\le u\wedge\frac{1}{2}\}$.\\
Top, Bot&\pageref{pg18}&Top and bottom projections of $B^3$ on $B^7$.\\
B-$\mathit{traversal}$&\pageref{pg19}&An ordered pair $\eta=(a,z)$, $a\in\operatorname{Top}$, $z\in
\operatorname{Bot}$.\\
B-$\mathit{itinerary}$&\pageref{pg20}&An ordered sequence of B-$\mathit{traversals}$.\\
$\tau_\rho(b)$& \pageref{pg21} &$\tau_\rho(b)=\gamma^+_{\lceil\rho
n^{d-2}\rceil}$.\\
$\CD^\rho_{\Lambda\rho}$ & \multirow{2}{20pt}{\pageref{pg221}, \pageref{pg222}} & Each $b\in\sigma(B)$ is traversed
top to bottom at least $\Lambda\rho\|b\|^{d-2}$ times.\\
$\CF_N^{T}(b,k,\rho)$& \pageref{pg23} & The event $\{b\cap\CO_N^{-1}\circ\CR
_N(t)\in
\CG_k^\rho(n)\dvtx\forall t\ge T\}$.\\
\hline
\end{tabular*}}
\end{appendix}

\section*{Acknowledgements}
Thanks goes to Itai Benjamini for suggesting
this problem and for fruitful discussions, and also to Gady Kozma
who suggested the renormalization method and provided examples and
counterexamples whenever they were needed.

%
%

%

%



\printaddresses

\end{document}